\documentclass[11pt, reqno]{amsart}
\usepackage{amsmath, amsthm, amscd, amsfonts, amssymb, graphicx, xcolor}
\usepackage[bookmarksnumbered, colorlinks, plainpages]{hyperref}
\hypersetup{
    colorlinks=true, 
    linktoc=all,     
    linkcolor=blue,  
}
\usepackage{lipsum}
\usepackage{mathrsfs}
\usepackage{mathtools}
\usepackage{tikz}
\usepackage{tikz-cd}
\usetikzlibrary{shapes.geometric, positioning, calc}
\usetikzlibrary{arrows}
\usetikzlibrary{graphs}
\usetikzlibrary{3d,calc}
\usepackage{multirow} 
\usepackage[toc,page]{appendix}
\usepackage[inline]{enumitem}
\usepackage{calc}
\usepackage{fontspec}
\usepackage{comment}
\usepackage{tabularx}
\usepackage{makecell}
\usepackage{empheq}
\usepackage{subcaption}
\usepackage{dsfont}
\usepackage{mdframed}
\usepackage{indentfirst} 
\usepackage{erewhon}
\usepackage{mathpazo}
\usepackage{amsrefs} 
\usepackage{caption}
\makeatletter

\renewcommand{\lim}{\mathsf{lim}}

\renewcommand{\hom}{\hspace{0.1em}\mathsf{Hom}}
\newtheorem{theorem}{Theorem}[section]
\newtheorem{lemma}[theorem]{Lemma}
\newtheorem{corollary}[theorem]{Corollary}
\theoremstyle{definition}
\newtheorem{definition}[theorem]{Definition}

\newtheorem{remark}[theorem]{Remark}
\newcommand{\cat}[1]{\mathbf{\boldsymbol{\mathsf{#1}}}}
\newcommand\restr[2]{{
  \left.\kern-\nulldelimiterspace 
  #1 
  \vphantom{\big|} 
  \right|_{#2} 
  }}
\newcommand\preceqdot{\mathrel{\ooalign{$\preceq$\cr\hidewidth\raise0.225ex\hbox{$\cdot\mkern0.5mu$}\cr}}} 
\numberwithin{equation}{section}
\newcommand{\key}[1]{\emph{#1}}
\theoremstyle{definition}

\DeclareMathOperator{\op}{\hspace{-0.2em}^{\mathsf{op}}}

\DeclareMathOperator{\id}{\mathsf{id}}

\DeclareMathOperator{\rank}{\mathsf{rank}}
\DeclareMathOperator{\Ima}{\mathsf{Im}}

\makeatletter

\makeatletter
\def\@tocline#1#2#3#4#5#6#7{\relax
  \ifnum #1>\c@tocdepth 
  \else
    \par \addpenalty\@secpenalty\addvspace{#2}%
    \begingroup \hyphenpenalty\@M
    \@ifempty{#4}{%
      \@tempdima\csname r@tocindent\number#1\endcsname\relax
    }{%
      \@tempdima#4\relax
    }%
    \parindent\z@ \leftskip#3\relax \advance\leftskip\@tempdima\relax
    \rightskip\@pnumwidth plus4em \parfillskip-\@pnumwidth
    #5\leavevmode\hskip-\@tempdima
      \ifcase #1
       \or\or \hskip 1em \or \hskip 2em \else \hskip 3em \fi%
      #6\nobreak\relax
    \hfill\hbox to\@pnumwidth{\@tocpagenum{#7}}\par
    \nobreak
    \endgroup
  \fi}
\makeatother
\begin{document}
\setcounter{page}{1}

\color{darkgray}{
\noindent 
{\small 
}

\centerline{}

\centerline{}
\title[Persistent Model of the Second Configuration Space of Metric Star Graphs]{Persistent Combinatorial Model of the Restricted Second Configuration Space of Metric Star Graphs}

\author[W. Li, M. \"Ozayd\i n]{Wenwen Li$^1$$^{*}$ and Murad \"Ozayd\i n$^2$}

\address{$^{1}$ Department of Mathematics, Florida State University, Tallahassee, USA.}
\email{\textcolor[rgb]{0.00,0.00,0.84}{wli13@fsu.edu}}

\address{$^{2}$ Department of Mathematics, University of Oklahoma, Norman, USA.}
\email{\textcolor[rgb]{0.00,0.00,0.84}{mozaydin@ou.edu}}


\subjclass[2020]{Primary 55N31, Secondary 55R80.}

\keywords{Configuration Spaces, Persistent Homology}

\date{\today
\newline \indent $^{*}$ Corresponding author}

\begin{abstract}
In this work, we present explicit constructions and computations of representative cycles for a nontrivial 2-parameter persistence module arising from the configuration space of metric star graphs. For all edge-length vector $\mathbf{L}=(L_1,L_2,\dots, L_k)\in(\mathbb{R}_{>0})^k$, we construct a bipartite weighted graph $(G_k)_{\mathbf{L}}$ and define filtering functions on the set of vertices and set of edges of $(G_k)_{\mathbf{L}}$ to obtain a filtration (denoted by $(G_k)_{-,\mathbf{L}}$) consisting of geometric realization of subgraphs of $(G_k)_{\mathbf{L}}$. We show that such a filtration is naturally isomorphic to the filtration of the restricted second configuration space of metric star graphs $(\mathsf{Star}_k)^2_{r,\mathbf{L}}$ concerning the restraint parameter $r$ and an (arbitrary but fixed) edge-length vector $\mathbf{L}$. Additionally, we show that the filtration $(G_k)_{-,\mathbf{L}}$ is compatible with the edge-length vector $\mathbf{L}$ up to isotopy, establishing an equivalence between the associated $(k+1)$-parameter persistence modules $PH_{i}((\mathsf{Star}_k)^2_{-,-};\mathbb{F})$ and $PH_{i}((G_k)_{-,-};\mathbb{F})$. We call the (multi-)filtration $(G_k)_{-,-}$ a \textit{persistent combinatorial model} of the multifiltration $(\mathsf{Star}_k)^2_{-,-}$. Using this model, we construct explicit compatible cycle representatives for $PH_{1}((\mathsf{Star}_k)^2_{-,-};\mathbb{F})$ in the bifiltration obtained by fixing $L_2, \dots, L_k > 0$ and varying only $r$ and $L_1$.
\end{abstract} 
\maketitle

\section{Introduction}

Configuration spaces of graphs have been an active subject of study in geometric topology and applied algebraic topology. The configuration space of a graph can be interpreted as the space of all possible arrangements of robots moving along the graph without colliding \cite{abrams2002finding}. While classical treatments model robots as points on a graph, this idealization omits the fact that real robots are thick particles. In practice, a minimal separation distance $r$ must be maintained between any pair of robots, which naturally leads to the study of \key{restricted} configuration spaces of metric graphs.

A metric graph is a geometric realization of a graph whose edges are assigned positive numbers representing their lengths. In a metric graph, each edge is identified with a closed interval of the corresponding length. We denote by $\delta$ the path metric on this space, i.e., for any points $x_1$ and $x_2$ in the metric graph, 
\begin{equation}
 \delta(x_1,x_2)=\inf\{\mathsf{Length}(\gamma): \mbox{$\gamma$ is a path connecting $x_1$ and $x_2$}\}   
\end{equation}

If the edge set of the graph is $E = \{e_1,\dots,e_k\}$, we write $\mathbf{L} = (L_1,\dots,L_k)$ for the collection of edge lengths, where $L_i > 0$ is the length assigned to the edge $e_i$. The vector $\mathbf{L}$ will be referred to as the \key{edge-length vector} of the metric graph. 

For a metric graph $(X_\mathbf{L},\delta)$ where $\delta$ is induced by the edge-length vector $\mathbf{L}$, the restricted second configuration space is 
\begin{equation}
X^2_{r,\mathbf{L}}=\{(x_1,x_2)\in X_{\mathbf{L}}^2: \delta(x_1,x_2)\geq r\}    
\end{equation}

Varying the restraint parameter $r$ and edge-length vector $\mathbf{L}$ produces a multifiltration of restricted configuration spaces, denoted by $X^2_{-,-}$. Applying the homology functor with a field coefficient to this multifiltration yields a multiparameter persistence module, denoted by $PH_i(X^2_{-,-};\mathbb{F})$. When $L_2,\dots,L_k$ are fixed, $X^2_{-,-}$ is a bifiltration and $PH_i(X^2_{-,-};\mathbb{F})$ is a $2$-parameter persistence module.


Although $2$-parameter persistence modules are difficult to compute or decompose in general, in this paper we provide explicit computations and decompositions of a concrete but highly nontrivial example---the first persistent homology of the bifiltration of the restricted configuration spaces of metric star graphs.

A \key{star graph}, denoted by $\mathsf{Star}_k$ (where $k\geq 3$), is a connected simple graph with $k+1$ vertices such that there is precisely one vertex connecting to all other vertices and all other vertices have degree one. See Figure~\ref{fig:matricstar}. For such a graph with the edge-length vector $\mathbf{L}=(L_1,L_2,\dots,L_k)$, its restricted second configuration space of metric star graph is denoted by $(\mathsf{Star}_k)^2_{r,\mathbf{L}}$, where $r$ is the restraint parameter. Figure~\ref{fig:1:intro:H1} gives the $2$-parameter persistence module $PH_1((\mathsf{Star}_k)^2_{-,-};\mathbb{F})$ when $L_2=\cdots=L_k=1$, where the dimension $PH_1((\mathsf{Star}_k)^2_{r,\mathbf{L}};\mathbb{F})$ is provided by the number labeled for each chamber and the morphism assigned for each comparable pair of parameters is induced by the inclusion on the space level~\cite{li2023persistent}.


\begin{minipage}[t]{0.4\textwidth}
    \centering
    
\resizebox{!}{3cm}{
\tikzset{every picture/.style={line width=0.75pt}} 

\begin{tikzpicture}[x=0.75pt,y=0.75pt,yscale=-1,xscale=1]

\draw    (63,90) -- (91,78) ;
\draw [shift={(91,78)}, rotate = 336.8] [color={rgb, 255:red, 0; green, 0; blue, 0 }  ][fill={rgb, 255:red, 0; green, 0; blue, 0 }  ][line width=0.75]      (0, 0) circle [x radius= 3.35, y radius= 3.35]   ;
\draw [shift={(63,90)}, rotate = 336.8] [color={rgb, 255:red, 0; green, 0; blue, 0 }  ][fill={rgb, 255:red, 0; green, 0; blue, 0 }  ][line width=0.75]      (0, 0) circle [x radius= 3.35, y radius= 3.35]   ;
\draw    (91,78) -- (126,80) ;
\draw [shift={(126,80)}, rotate = 3.27] [color={rgb, 255:red, 0; green, 0; blue, 0 }  ][fill={rgb, 255:red, 0; green, 0; blue, 0 }  ][line width=0.75]      (0, 0) circle [x radius= 3.35, y radius= 3.35]   ;
\draw [shift={(91,78)}, rotate = 3.27] [color={rgb, 255:red, 0; green, 0; blue, 0 }  ][fill={rgb, 255:red, 0; green, 0; blue, 0 }  ][line width=0.75]      (0, 0) circle [x radius= 3.35, y radius= 3.35]   ;
\draw    (91,78) -- (91,115.67) ;
\draw [shift={(91,115.67)}, rotate = 90] [color={rgb, 255:red, 0; green, 0; blue, 0 }  ][fill={rgb, 255:red, 0; green, 0; blue, 0 }  ][line width=0.75]      (0, 0) circle [x radius= 3.35, y radius= 3.35]   ;
\draw [shift={(91,78)}, rotate = 90] [color={rgb, 255:red, 0; green, 0; blue, 0 }  ][fill={rgb, 255:red, 0; green, 0; blue, 0 }  ][line width=0.75]      (0, 0) circle [x radius= 3.35, y radius= 3.35]   ;
\draw    (91,78) -- (105,37) ;
\draw [shift={(105,37)}, rotate = 288.85] [color={rgb, 255:red, 0; green, 0; blue, 0 }  ][fill={rgb, 255:red, 0; green, 0; blue, 0 }  ][line width=0.75]      (0, 0) circle [x radius= 3.35, y radius= 3.35]   ;
\draw [shift={(91,78)}, rotate = 288.85] [color={rgb, 255:red, 0; green, 0; blue, 0 }  ][fill={rgb, 255:red, 0; green, 0; blue, 0 }  ][line width=0.75]      (0, 0) circle [x radius= 3.35, y radius= 3.35]   ;
\draw    (91,78) -- (64,44) ;
\draw [shift={(64,44)}, rotate = 231.55] [color={rgb, 255:red, 0; green, 0; blue, 0 }  ][fill={rgb, 255:red, 0; green, 0; blue, 0 }  ][line width=0.75]      (0, 0) circle [x radius= 3.35, y radius= 3.35]   ;
\draw [shift={(91,78)}, rotate = 231.55] [color={rgb, 255:red, 0; green, 0; blue, 0 }  ][fill={rgb, 255:red, 0; green, 0; blue, 0 }  ][line width=0.75]      (0, 0) circle [x radius= 3.35, y radius= 3.35]   ;

\draw (113.98,40.79) node [anchor=north west][inner sep=0.75pt]  [rotate=-51.21]  {$\cdots $};
\draw (93,119.07) node [anchor=north west][inner sep=0.75pt]    {$1$};
\draw (94,80.4) node [anchor=north west][inner sep=0.75pt]    {$0$};
\draw (47,84.4) node [anchor=north west][inner sep=0.75pt]    {$2$};
\draw (52,20.4) node [anchor=north west][inner sep=0.75pt]    {$3$};
\draw (106,13.4) node [anchor=north west][inner sep=0.75pt]    {$4$};
\draw (135,72.4) node [anchor=north west][inner sep=0.75pt]    {$k$};

\end{tikzpicture}
}
    \captionof{figure}{$\mathsf{Star}_k$}
    \label{fig:matricstar}
\vspace{1em}
\end{minipage}%
\begin{minipage}[t]{0.6\textwidth}
\centering
\resizebox{!}{0.65\textwidth}{

\tikzset{every picture/.style={line width=0.75pt}} 

\begin{tikzpicture}[x=0.75pt,y=0.75pt,yscale=-1,xscale=1]

\draw  (6.75,299.04) -- (373.46,299.04)(43.42,46.75) -- (43.42,327.07) (366.46,294.04) -- (373.46,299.04) -- (366.46,304.04) (38.42,53.75) -- (43.42,46.75) -- (48.42,53.75)  ;
\draw  [draw opacity=0][fill={rgb, 255:red, 126; green, 211; blue, 33 }  ,fill opacity=0.62 ] (43.42,299.04) -- (139.57,203.47) -- (139.82,298.78) -- cycle ;
\draw  [draw opacity=0][fill={rgb, 255:red, 248; green, 231; blue, 28 }  ,fill opacity=0.8 ] (138.9,52.47) -- (138.9,106.03) -- (139.57,203.47) -- (43.42,299.04) -- (43.93,52.47) -- cycle ;

\draw (139.47,313.34) node   [align=left] {1};
\draw (236.73,312.19) node   [align=left] {2};
\draw (35.35,203.5) node   [align=left] {1};
\draw (34.21,108.53) node   [align=left] {2};
\draw (17.07,17.96) node [anchor=north west][inner sep=0.75pt]    {$L_1$};
\draw (376.76,302.52) node [anchor=north west][inner sep=0.75pt]    {$r$};
\draw (51,146.4) node [anchor=north west][inner sep=0.75pt]    {$k( k-3) +1$};
\draw (59,279.4) node [anchor=north west][inner sep=0.75pt]    {$k( k-5) +5$};

\end{tikzpicture}
}
\captionof{figure}{$PH_1((\mathsf{Star_k})^2_{-,-};\mathbb{F})$, $k\geq 4$ and \\
$\mathbf{L}=(L_1,1,\dots,1)$}
\label{fig:1:intro:H1}
  
\end{minipage}

The goal of this paper is to compute the first persistent homology of the bifiltration of second configuration spaces of a metric star graph,  $PH_1((\mathsf{Star}_k)^2_{-,-};\mathbb{F})$, and to analyze its indecomposable direct summands. Given $r$ and $\mathbf{L}$, the structure of the top-dimension cells of $(\mathsf{Star}_k)^2_{r,\mathbf{L}}$ can be characterized by systems of inequalities. However, computing $PH_1((\mathsf{Star}_k)^2_{r,\mathbf{L}};\mathbb{F})$ is difficult and analyzing the persistence module $PH_1((\mathsf{Star}_k)^2_{-,-};\mathbb{F})$ directly from the definition of the configuration space presents significant challenges. One must find a non-degenerate cycle representative for each element of $PH_1((\mathsf{Star}_k)^2_{r,\mathbf{L}};\mathbb{F})$ that is compatible with the structure maps of the persistence module as both $r$ and $\mathbf{L}$ vary, which is not straightforward from the geometry of the restricted configuration spaces.

\subsection{Motivation of the Model}
Before presenting the formal construction, we briefly describe the idea behind the model. The key observation is the following:
Each 2-cell of $(\mathsf{Star}_k)^2_{r,\mathbf{L}}$ deformation-retracts onto a 1-dimensional CW complex determined by a collection of its free facets. These local deformation retractions assemble into a global deformation retraction of the entire configuration space $(\mathsf{Star}_k)^2_{r,\mathbf{L}}$ onto a 1-dimensional CW complex.

Such a deformation retract of $(\mathsf{Star}_k)^2_{r,\mathbf{L}}$ can be viewed as a geometric realization of a \textit{weighted} graph, denoted by $(G_k)_{r,\mathbf{L}}$. Our goal is to construct a multifiltration of such graphs that reflects the multifiltration of the configuration spaces $(\mathsf{Star}_k)^2_{-,-}$. To accomplish this, we begin by fixing the edge-length vector $\mathbf{L}$ and constructing a weighted graph $(G_k)_{\mathbf{L}}$ equipped with filtering functions on its vertices and edges. The vertices of $(G_k)_{\mathbf{L}}$ encode which edges or the center vertex the robots occupy: a vertex $(x,y)$ corresponds to a configuration where one robot lies on the edge of $\mathsf{Star}_k$ incident to a vertex $x$ and the other on the edge incident to a vertex $y$. Edges of $(G_k)_{\mathbf{L}}$ record admissible simultaneous motions of the two robots respecting the proximity parameter $r$, with weights inherited from the edge-length vector $\mathbf{L}$. By defining natural filtering functions on both the vertex and edge sets of $(G_k)_{\mathbf{L}}$, we obtain a filtration $(G_k)_{-,\mathbf{L}}$ indexed by the proximity parameter $r$, where $(G_k)_{r,\mathbf{L}}$ is a weighted subgraph of $(G_k)_{\mathbf{L}}$ for all $r>0$. 


 Moreover, these deformation retractions are compatible across all parameter values of $r$ and $\mathbf{L}$. Consequently, the multifiltration of weighted graphs $(G_k)_{r,\mathbf{L}}$ along with inclusion maps between graphs with comparable parameters provides a discrete model that corresponds to the multifiltration of configuration spaces $(\mathsf{Star}_k)^2_{r,\mathbf{L}}$.

\subsection{Main Results}
To address these obstacles, we introduce a bipartite graph model $(G_k)_{\mathbf{L}}$ for each edge-length vector $\mathbf{L}$. By defining filtering functions on the vertex set and edge set of $(G_k)_{\mathbf{L}}$, we obtain a filtration consisting of subgraphs $(G_k)_{r,\mathbf{L}}$ of $(G_k)_{\mathbf{L}}$. As shown in Lemma~\ref{lemma:deformation}, $(G_k)_{r,\mathbf{L}}$ forms a combinatorial model that captures the restricted configuration space $(\mathsf{Star}_k)^2_{r,\mathbf{L}}$ up to homotopy: for all $r$ and $\mathbf{L}$,
$$\lVert (G_k)_{r,\mathbf{L}}\rVert\simeq (\mathsf{Star}_k)^2_{r,\mathbf{L}}$$
where $\lVert (G_k)_{r,\mathbf{L}}\rVert$ is a geometric realization of the graph $(G_k)_{r,\mathbf{L}}$.

These weighted subgraphs $(G_k)_{r,\mathbf{L}}$ assemble into a multifiltration indexed by the parameters $(r,\mathbf{L})$. This construction is not immediate; establishing the multifiltration requires several technical lemmas, which we verify in section~\ref{sec:3.2-model}.

A central result of this paper, Theorem~\ref{thm:2.8}, establishes an isomorphism between the $(k+1)$-parameter persistence modules $$PH_i((\mathsf{Star}_k)^2_{-,-};\mathbb{F})\cong PH_i(\lVert(G_k)_{-,-}\rVert;\mathbb{F})\qquad\text{for all } i\in\mathbb{N}$$ The $2$-parameter persistence module studied in this paper arises as a special case obtained by fixing the edge lengths $L_2,\dots,L_k$.

Our main computational results, presented in Section~\ref{sec:indecomp}, provide a complete description of the
$2$-parameter persistence module $PH_1((\mathsf{Star}_k)^2_{-,-};\mathbb{F})$ for edge-length vectors $\mathbf{L}$, where $L_2,\dots,L_k$ are fixed positive real numbers, and identify all of its indecomposable direct summands. Because the full hyperplane arrangement of the restricted second configuration spaces of $\mathsf{Star}_k$ is highly complex (as it depends on the choice of $L_2,\dots, L_k$), we first pass to the persistent model $(G_k)_{-,-}$ and simplify the
arrangement by discarding hyperplanes that do not affect first homology (Theorem~\ref{thm:4.4}, Theorem~\ref{thm:4.6-1}). Using this reduced hyperplane arrangement, we compute the rank of $PH_1((\mathsf{Star}_k)^2_{r,\mathbf{L}};\mathbb{F})$ for all $r>0$ and $L>0$. In Section~\ref{sec:4.2-decomp}, we perform a complete analysis of the $2$-parameter persistence module $PH_1((\mathsf{Star}_k)^2_{-,-};\mathbb{F})$ for $k=4$, where we identify all of its indecomposable summands explicitly (Theorem~\ref{thm:5.8}). We then use these computations as the base step of induction (on $k$), proving that
$PH_1((\mathsf{Star}_k)^2_{-,-};\mathbb{F})$ is interval decomposable for all $k\ge 4$(Theorem~\ref{thm:5.10}). In addition, we provide explicit descriptions of all interval modules that appear in this
decomposition.

\subsection{Organization of the Paper}
The paper is organized as follows. Section~\ref{sec:prelim} reviews restricted configuration spaces of metric graphs and basic notions of multiparameter persistence modules. Section~\ref{sec:bipartite-model} introduces the bipartite weighted graph $(G_k)_{\mathbf{L}}$ associated to a metric star graph with the edge-length vector $\mathbf{L}$ and construct the multifiltration $(G_k)_{-,-}$, together with its relation to the multifiltration $(\mathsf{Star}_k)^2_{-,-}$. In Section~\ref{sec:indecomp}, we study the hyperplane arrangement of $(\mathsf{Star}_k)^2_{r,\mathbf{L}}$ (when $L_2,\dots,L_k$ are fixed) in the $(r,L_1)$-parameter space and analyze the hyperplanes that control the first homology of $(\mathsf{Star}_k)^2_{r,\mathbf{L}}$. This analysis enables us to simplify the hyperplane arrangement. We then utilize the persistent bipartite model $(G_k)_{-,-}$, and the resulting chamber decomposition (of the reduced hyperplane arrangement) to determine the indecomposable direct summands of $PH_1((\mathsf{Star}_k)^2_{-, -};\mathbb{F})$ and to show that this 2-parameter persistence module is interval-decomposable. 
\section{Preliminaries}\label{sec:prelim}

\subsection{Restricted Configuration Spaces of Metric Graphs}
Throughout this paper, we assume that all graphs are finite and simple.

A weighted graph $X=(V,E,\psi,\omega)$ consists of a set of vertices $V$ and a set of edges $E$ together with an incidence function $\psi: E\rightarrow {V \choose 2}$ and a weight function $\omega: E\rightarrow \mathbb{R}_{>0}$.

We define the category of weighted graphs (denoted by $\cat{wGraph}$) as follows: objects are edge-weighted graphs and morphisms between weighted graphs are graph morphisms such that the weight of the image of an edge in the target graph is no smaller than the weight of the edge in the source graph. We use $\cat{wFinGraph}$ to denote the category of weighted finite graphs, which is a full subcategory of  $\cat{wGraph}$.

\begin{definition}\label{def:2:1}
    Let $X=(V,E,\psi,\omega)$ be a weighted graph. 
    Choose an ordered pair $(u,v)$ for each edge $e\in E$ with $\psi(e)=\{u,v\}$.
    The \key{geometric realization} of $X$ is the quotient space
    $$\lVert X_{\mathbf{L}}\rVert=\left[ \left.\bigsqcup\limits_{v\in V}\{v\} \sqcup \bigsqcup\limits_{e\in E}[0,\omega(e)]\right]\right/\sim$$
    Where $\mathbf{L}:=(\omega(e))_{e\in E}$ is an edge-length vector and $\sim$ is an equivalence relation generated by: $u\sim 0\in [0,\omega(e)]$ and $v\sim \omega(e)\in [0,\omega(e)]$ if $\psi(e)=\{u,v\}$. The topological space $\lVert X_{\mathbf{L}}\rVert$ with the path metric $\delta$ is called the \key{metric graph} of $X$. 
\end{definition}

\begin{remark}
A weighted graph $X=(V,E,\psi,\omega)$ is purely a combinatorial object: it consists of a vertex set, an edge set, an incidence map, and a weight function assigning a positive real number to each edge. In contrast, the metric graph $\lVert X_{\mathbf{L}}\rVert$ is a topological space equipped with the path metric induced by the edge-length vector $\mathbf{L}$.
\end{remark}

\begin{lemma}\label{chap:2:lemma:2}
    $\lVert \cdot \rVert$ is a functor from the
    category of weighted finite graphs to the category of metric spaces with continuous functions.
\end{lemma}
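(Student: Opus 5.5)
To prove Lemma~\ref{chap:2:lemma:2}, I would first fix the assignment on objects and on morphisms, and then check continuity and functoriality. On objects, a weighted finite graph $X=(V,E,\psi,\omega)$ is sent to its metric graph $(\lVert X_{\mathbf{L}}\rVert,\delta)$ from Definition~\ref{def:2:1}. Since $X$ is finite, $\delta$ is a genuine metric: the realization is a finite union of compact intervals, and any two points in the same path component are joined by a path of finite length. If the graph is disconnected, I would allow $\delta=\infty$ between components (an extended metric), or equivalently regard the realization as the disjoint union of the metric spaces of its components. I would also note that the metric topology agrees with the quotient topology; this holds because there are finitely many cells and each closed interval $[0,\omega(e)]$ embeds isometrically onto its image.

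On morphisms, let $f:X\to Y$ be a morphism in $\cat{wFinGraph}$. It consists of a vertex map $f_V$ and an edge map $f_E$ compatible with $\psi$, and it satisfies $\omega_Y(f_E(e))\ge \omega_X(e)$. I would define $\lVert f\rVert$ by sending each vertex $v$ to $f_V(v)$ and each edge interval by the linear rescaling
\[
[0,\omega_X(e)]\to[0,\omega_Y(f_E(e))],\qquad t\mapsto \tfrac{\omega_Y(f_E(e))}{\omega_X(e)}\,t .
\]
If the chosen orientation of $f_E(e)$ is opposite to the image of the orientation of $e$, I would compose with the flip $s\mapsto \omega_Y(f_E(e))-s$. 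The orientation bookkeeping is the \emph{main obstacle}: one must verify that the endpoint identifications $u\sim 0$ and $v\sim\omega(e)$ are respected, so that the map descends to the quotient. This follows from $\psi_Y(f_E(e))=f_V(\psi_X(e))$, once the flip is chosen according to the orientations.

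Continuity then follows from the universal property of the quotient: the map is continuous on each piece of the disjoint union, so it is continuous on $\lVert X_{\mathbf{L}}\rVert$. (Since each edge is stretched by a factor $\omega_Y/\omega_X\ge 1$, the map need not be $1$-Lipschitz. That does not matter here, because the target category only asks for continuous functions.) Functoriality is immediate. The identity morphism gives scaling factor $1$ and no flip, so $\lVert \id\rVert=\id$. For a composite $g\circ f$, the scaling factors multiply, $\tfrac{\omega_Z}{\omega_Y}\cdot\tfrac{\omega_Y}{\omega_X}=\tfrac{\omega_Z}{\omega_X}$, and the flips compose consistently with the orientations. Hence $\lVert g\circ f\rVert=\lVert g\rVert\circ\lVert f\rVert$ on each interval and on the vertices, which establishes the lemma.
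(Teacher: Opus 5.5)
Your map on morphisms is the paper's: it writes $\lVert f\rVert(x)=(1-t_x)f(u_x)+t_xf(v_x)$ for $x=(1-t_x)u_x+t_xv_x$, which is exactly your linear rescaling, and its functoriality check is the same computation as yours. Using normalized barycentric coordinates on each edge also removes the orientation bookkeeping you call the main obstacle. Swapping $u_x,v_x$ while replacing $t_x$ by $1-t_x$ gives the same value, so no flip ever has to be chosen.

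The genuine difference is continuity. The paper stays inside the metric. With $k=\max_{e\in E}\omega'(f(e))/\omega(e)$, every path of length $\ell$ is carried to a path of length at most $k\ell$, so $\delta'(\lVert f\rVert(x),\lVert f\rVert(x_0))\le k\,\delta(x,x_0)$. This is your stretch-factor observation pushed one step further. It gives the Lipschitz property the paper uses in the remark after the lemma, and it never compares topologies.

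Your route through the universal property of the quotient gives only continuity. It also depends on the metric topology coinciding with the quotient topology. That fact is true, but your reason for it is not: an edge interval need not embed isometrically. For example, in a triangle with edge lengths $10,1,1$, the endpoints of the long edge are at distance $2$.

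A correct argument runs as follows. Each interval $[0,\omega(e)]$ maps $1$-Lipschitzly into $(\lVert X_{\mathbf{L}}\rVert,\delta)$, so the identity from the quotient topology to the metric topology is continuous. The quotient of finitely many compact intervals and points is compact, and the metric space is Hausdorff. A continuous bijection from a compact space to a Hausdorff space is a homeomorphism. This argument works equally well for the extended metric you allow between components, a case the paper passes over.
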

\begin{proof}
    Let  $X=(V,E,\psi,\omega)$ and  $X'=(V',E',\psi',\omega')$ be finite weighted graphs. For any morphism $f: X\rightarrow X'$ in $\cat{wFinGraph}$ (which is a map $f: V\rightarrow V'$), define $\lVert f\rVert: \lVert X_\mathbf{L}\rVert \rightarrow \lVert X'_{\mathbf{L}'}\rVert$ by $$\lVert f\rVert(x)=(1-t_x)f(u_x)+t_xf(v_x)$$
    where $x=(1-t_x)u_x+t_xv_x$ for some $\{u_x,v_x\}\in E$ and $0\leq t_x\leq 1$. It is clear that $\lVert \id_X\rVert=\id_{\lVert X\rVert}$ for all $X$. Moreover, given morphisms $f:X\rightarrow X'$ and $g:X'\rightarrow X''$ in $\cat{wFinGraph}$, \begin{equation}
        \begin{aligned}
            \lVert g\rVert(\lVert f\rVert(x))&=\lVert g\rVert((1-t_x)f(u_x)+t_xf(v_x))\\
            &=(1-t_x)g(f(u_x))+t_xg(f(v_x))\\
            &=(1-t_x)(g\circ f)(u_x)+t_x(g\circ f)(v_x)\\
            &= \lVert g\circ f\rVert(x)
        \end{aligned}
    \end{equation} 
    
    Now we show $\lVert f\rVert$ is continuous for any morphism $f:X\rightarrow X'$ in $\cat{wFinGraph}$. To avoid confusion, we use $d_{X_{\mathbf{L}}}$ (instead of $\delta$) to denote the path metric on the metric graph $X_{\mathbf{L}}$, reserving the symbol $\delta$ for the $\delta$ in the $\varepsilon-\delta$ definition of continuity.

    Define $$k=\max\left\{\frac{\omega'\{f(u),f(v)\}}{\omega\{u,v\}}: \{u,v\}\in E\right\}$$
    Then for all $\varepsilon>0$, there exists $\delta=\frac{\varepsilon}{k}$, such that for all $x\in N_{\delta}(x_0)$,
    \begin{equation}
        \begin{aligned}
            d_{X_{\mathbf{L}'}'}(\lVert f\rVert(x),\lVert f\rVert(x_0))
            &\leq k d_{X_{\mathbf{L}}}(x,x_0)<k\delta=k\frac{\varepsilon}{k}=\varepsilon\\
        \end{aligned}
    \end{equation}
Hence $\lVert f\rVert$ is continuous.

\end{proof}

\begin{remark}
For each weighted finite graph, its geometric realization carries a natural path metric induced by the edge weights, making it a metric space. As shown in Lemma~\ref{chap:2:lemma:2}, any morphism between weighted graphs induces a continuous map between their geometric realizations, and this induced map is uniformly bounded by a constant $k$ since there are only finitely many edges in the source and target graphs. Hence the induced map is Lipschitz.  Thus the functor $\lVert \cdot \rVert$ also defines a functor from weighted finite graphs to the category of metric spaces with Lipschitz maps.
\end{remark}

Given a finite metric graph $(X_{\mathbf{L}},\delta)$ where $\mathbf{L}\in\mathbb{R}_{>0}^{\lvert E\rvert}$ is the edge-length vector, the \key{nth configuration space} of $X_{\mathbf{L}}$ with \key{restraint parameter} $\mathbf{r}=(r_{ij})_{i<j}\in \mathbb{R}_{>0}^{n \choose 2}$ is 
$$X^n_{\mathbf{r},\mathbf{L}}=\{(x_1,\dots,x_n)\in (X_{\mathbf{L}})^n:\delta(x_i,x_j)\geq r_{ij}, \forall i\neq j\}$$

The cellular structure of $X^n_{\mathbf{r},\mathbf{L}}$ can be characterized by \key{parametric polytopes}. 
A parametric polytope $c_{\mathbf{b}}$ in $\mathbb{R}^n$ is determined by $A\mathbf{x}\leq \mathbf{b}$ where $A$ is an $m\times n$ matrix, $\mathbf{x}$ is a $n\times 1$ column vector and $\mathbf{b}$ is a $m\times 1$ column vector. The solution space of $Ax\leq \mathbf{b}$ determines a convex polytope $c_{\mathbf{b}}$ in $\mathbb{R}^n$. 
Note that the family of polytopes $\{c_{\mathbf{b}}\}_{\mathbf{b}\in\mathbb{R}^n}$ has only finitely many combinatorial types~\cite{dover2013homeomorphism}.

Each maximal cell of $X^n_{\mathbf{r},\mathbf{L}}$ consists of the solutions of inequality system:\\

For $1\leq i\leq n$:
\begin{equation}\label{pp1}
0\leq  x_i\leq  L_{e_i}
\end{equation}

For $1\leq i<j\leq n$:
\begin{equation}\label{pp2}
\begin{aligned}
x_i+x_j+\delta(a_i,a_j)&\geq r_{ij};\\
L_{e_i}-x_i+x_j+\delta(b_i,a_j)&\geq r_{ij};\\
x_i+L_{e_j}-x_j+\delta(a_i,b_j)&\geq r_{ij};\\
L_{e_i}-x_i+L_{e_j}-x_j+\delta({b_i,b_j})&\geq r_{ij}
\end{aligned}
\end{equation}

If there exists $1\leq i,j\leq n$ such that $e_i=e_j$, with further assumption that $x_i\leq x_j$, 
\begin{equation}\label{pp3}
\begin{aligned}
x_i-x_j&\leq 0;\\
x_i-x_j&\leq -r_{ij};\\
L_{e_i}-x_i+x_j+\delta(a_i,b_i)&\geq r_{ij}
\end{aligned}
\end{equation}

Let $X:=X_{\mathbf{L}}$ be a metric graph with edge-length vector $\mathbf{L}$. Define an equivalence relation on the parameter space as follows: $\mathbf{r}\sim\mathbf{s}$ if and only if $X^n_{\mathbf{r}}$ and $X^n_{\mathbf{s}}$ have the same combinatorial type. Hence $X^n_{\mathbf{r}}$ is homeomorphic to $X^n_{\mathbf{s}}$ if $\mathbf{r}\sim\mathbf{s}$. Moreover, $\mathbf{r}$ is called \key{critical} if the combinatorial type (face poset) of the cell of $(X_{\mathbf{L}})^n_{\mathbf{r}}$ changes at $\mathbf{r}$. When $X$ is a metric tree, $\mathbf{r}$ is critical only when one of the inequalities in the inequality system (\ref{pp1})(\ref{pp2})(\ref{pp3}) becomes an equality. We obtain critical hyperplanes in the parameter space by considering the critical $\mathbf{r}$ among all possible $\mathbf{L}$.


\begin{definition}
Let $\mathcal{H} = \{H_1, H_2, \dots, H_m\}$ be a finite arrangement of affine hyperplanes in $\mathbb{R}^n$, 
where each hyperplane is defined by
$$H_i = \{x \in \mathbb{R}^n : f_i(x) = 0\}$$
where $f_i : \mathbb{R}^n \to \mathbb{R}$ is an affine linear functional.

Define
$$
H_i^+ = \{x \in \mathbb{R}^n : f_i(x) \ge 0\},
\qquad
H_i^- = \{x \in \mathbb{R}^n : f_i(x) < 0\}.
$$

A \key{chamber} of the hyperplane arrangement $\mathcal{H}$ is a (possibly unbounded) polyhedral region of the form
$$
C = \bigcap_{i=1}^m H_i^{\sigma_i},
\qquad \sigma_i \in \{+, -\}.
$$


\end{definition}

Note that $X^n_{\mathbf{r}}$ and $X^n_{\mathbf{s}}$ have the same combinatorial type if $\mathbf{r}$ and $\mathbf{s}$ are contained in the same chamber. The following Lemma shows that the chambers (as subsets of $\mathbb{R}^n$) are convex.

\begin{lemma}[\cite{dover2013homeomorphism}]
Let $\mathbf{b}'\in\mathbb{R}^n$ and $T_{\mathbf{b}'}$ denote the set of abstract vertices of the parametric polytope $c_{\mathbf{b}}$. If $T_{\mathbf{b}'}\neq\emptyset$, then the set of all $\mathbf{b}\in\mathbb{R}^n$ with $T_{\mathbf{b}}=T_{\mathbf{b}'}$ is convex.
\end{lemma}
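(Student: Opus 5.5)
We must prove: for a parametric polytope $c_{\mathbf{b}}=\{\mathbf{x}: A\mathbf{x}\le\mathbf{b}\}$, the set $\{\mathbf{b}: T_{\mathbf{b}}=T_{\mathbf{b}'}\}$ is convex whenever $T_{\mathbf{b}'}\neq\emptyset$. Here an \emph{abstract vertex} is a subset $I$ of row indices with $A_I$ invertible, and the point $\mathbf{x}_I(\mathbf{b})=A_I^{-1}\mathbf{b}_I$ is feasible and is exactly the solution of the equalities indexed by $I$. So $T_{\mathbf{b}}$ is the set of all such $I$ that are realized as vertices at $\mathbf{b}$. The plan is to rewrite the condition $T_{\mathbf{b}}=T_{\mathbf{b}'}$ as finitely many affine conditions on $\mathbf{b}$. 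The key point is that, for a fixed index set $I$, the candidate vertex $\mathbf{x}_I(\mathbf{b})$ depends \emph{linearly} on $\mathbf{b}$.

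For each maximal row set $I$ with $A_I$ invertible (size $n$), and each row $j$, the slack $s_{I,j}(\mathbf{b})=b_j-A_j A_I^{-1}\mathbf{b}_I$ is a linear function of $\mathbf{b}$. Membership $I\in T_{\mathbf{b}}$ then says the following:
\begin{itemize}
\item $s_{I,j}(\mathbf{b})\ge 0$ for all $j$;
\item $s_{I,j}(\mathbf{b})=0$ exactly for $j\in I$;
\item more precisely, the set $\{j: s_{I,j}(\mathbf{b})=0\}$ is a prescribed set $J_I\supseteq I$.
\end{itemize}
To pin down $J_I$, define $T_{\mathbf{b}}$ as recording, for each vertex, its full active set. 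Then $T_{\mathbf{b}}=T_{\mathbf{b}'}$ is equivalent to the following sign pattern. For every $I$ with $J_I$ equal to the active set of a vertex in $T_{\mathbf{b}'}$, we need $s_{I,j}(\mathbf{b})=0$ for $j\in J_I$ and $s_{I,j}(\mathbf{b})>0$ for $j\notin J_I$. For every invertible $I$ not arising in this way, the point $\mathbf{x}_I(\mathbf{b})$ must fail to be a vertex with a new active set. This second condition is the delicate one.

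Here the nonemptiness hypothesis enters. Since the polytope is nonempty and has a vertex, it is pointed. Its vertex set is determined by feasibility of the basic solutions: a basis $I$ yields a vertex iff all $s_{I,j}\ge 0$. Distinct bases can give the same vertex, and the active set is recovered as the zero set of the slacks. I will therefore argue the following. If $\mathbf{b}_0,\mathbf{b}_1$ both have abstract vertex set $T$, then at $\mathbf{b}_t=(1-t)\mathbf{b}_0+t\mathbf{b}_1$:
\begin{itemize}
\item for bases belonging to $T$, the slack sign conditions (equalities and strict positivities) are preserved by linear interpolation;
\item every feasible point is a convex combination of the vertices, by Minkowski--Weyl for the pointed polyhedron, plus possibly the recession cone, which is independent of $\mathbf{b}$.
\end{itemize}
Concretely, $c_{\mathbf{b}_t}\supseteq (1-t)c_{\mathbf{b}_0}+t\,c_{\mathbf{b}_1}$. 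The vertices $\mathbf{x}_I(\mathbf{b}_t)=(1-t)\mathbf{x}_I(\mathbf{b}_0)+t\,\mathbf{x}_I(\mathbf{b}_1)$ lie in $c_{\mathbf{b}_t}$ with active sets exactly $J_I$. Because the active sets of these points form the same face lattice data, they are the vertices of a polytope of the same normal fan as $c_{\mathbf{b}_0}$. Two polyhedra with the same normal fan (same $T$) are Minkowski-summable within that type class, and the Minkowski combination lies in the same type-cone.

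The main obstacle is the last step: ruling out extra vertices of $c_{\mathbf{b}_t}$, i.e., showing $c_{\mathbf{b}_t}=(1-t)c_{\mathbf{b}_0}+t\,c_{\mathbf{b}_1}$. I would handle it with type-cone theory. The set of $\mathbf{b}$ giving a fixed normal fan is the relative interior of a polyhedral cone, cut out by the linear equalities and strict inequalities on the slacks described above. On this set the support function is linear in $\mathbf{b}$, so $h_{c_{\mathbf{b}_t}}=(1-t)h_{c_{\mathbf{b}_0}}+t\,h_{c_{\mathbf{b}_1}}$ on each fan cone. It follows that $c_{\mathbf{b}_t}$ equals the Minkowski combination and has the same abstract vertices. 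Checking this support-function linearity, namely that $\min_{\mathbf{x}\in c_{\mathbf{b}}}\langle u,\mathbf{x}\rangle$ is attained at $\mathbf{x}_I(\mathbf{b})$ for the same $I$ throughout the class, is where I would spend the care. It is convexity of the condition "$I$ is optimal for $u$", which is again a finite set of linear inequalities in $\mathbf{b}$ via LP duality: the reduced costs do not depend on $\mathbf{b}$, so only primal feasibility, which is linear in $\mathbf{b}$, needs to be checked.
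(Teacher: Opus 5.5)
The paper gives no proof of this lemma; it is quoted from Dover. Your sketch is therefore an independent, self-contained route, and its substance is correct.

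First fix $T_{\mathbf b}$ to be the set of full active sets of vertices of $c_{\mathbf b}$. Your second bullet (zero slack ``exactly for $j\in I$'') contradicts your third and should be dropped. This active-set reading is also the one the paper needs, since it uses the lemma to get convexity of regions of constant face poset.

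With that definition, everything rests on two observations you already make:
\begin{enumerate}
\item[(i)] For $J\in T$, the point $x_J(\mathbf b)=A_I^{-1}\mathbf b_I$ (with $I\subseteq J$ a basis) and all its slacks are linear in $\mathbf b$. So the equalities on $J$ and the strict inequalities off $J$ persist along $\mathbf b_t=(1-t)\mathbf b_0+t\mathbf b_1$, which gives $T\subseteq T_{\mathbf b_t}$.
\item[(ii)] The vertex with active set $J$ maximizes $\langle u,\cdot\rangle$ exactly when $u\in\operatorname{cone}(A_J)$, and this certificate does not involve $\mathbf b$.
\end{enumerate}
Compared with the citation, your route makes explicit where $T\neq\emptyset$ enters. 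It gives a nonempty pointed polyhedron whose recession cone $\{d: Ad\le 0\}$ does not depend on $\mathbf b$. Hence a linear program that is bounded at $\mathbf b_t$ is also bounded at $\mathbf b_0$, and is attained at a vertex there.

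You should delete the normal-fan/type-cone middle of the sketch rather than rely on it.
\begin{itemize}
\item The step ``same face lattice data, hence vertices of a polytope with the same normal fan'' is not justified.
\item The claim that ``the set of $\mathbf b$ giving a fixed normal fan is the relative interior of a polyhedral cone cut out by the slack conditions'' is, for fixed $T$, precisely the delicate direction you are trying to prove.
\item A fixed normal fan is also coarser than a fixed $T$: a redundant inequality whose normal lies in $\operatorname{cone}(A_J)$ can become tight at a vertex without changing the fan.
\item The phrase ``the same $I$ throughout the class'' may only be used at $\mathbf b_0$ and $\mathbf b_1$, where it follows from $T_{\mathbf b_0}=T_{\mathbf b_1}$.
\end{itemize}

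With (i) and (ii), the Minkowski-sum identity is not needed at all. Let $w$ be a vertex of $c_{\mathbf b_t}$ with active set $K$, and pick $u$ in the interior of $\operatorname{cone}(A_K)$. Then $w$ is the unique maximizer of $\langle u,\cdot\rangle$ on $c_{\mathbf b_t}$. The maximum over $c_{\mathbf b_0}$ is attained at some $x_J(\mathbf b_0)$ with $J\in T$ and $u\in\operatorname{cone}(A_J)$. By (i), $x_J(\mathbf b_t)$ lies in $c_{\mathbf b_t}$ with active set exactly $J$. By (ii), it maximizes $\langle u,\cdot\rangle$ there. So $w=x_J(\mathbf b_t)$ and $K=J\in T$, which settles $T_{\mathbf b_t}\subseteq T$ directly.
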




\subsection{Persistence Modules}
A \key{persistence module} over a partially ordered set 
$(P,\leq)$ is a functor $M:(P,\leq)\rightarrow \cat{Vect}_{\mathbb{F}}$, where $\mathbb{F}$ is a field and $\cat{Vect}_{\mathbb{F}}$ is the category of vector spaces over $\mathbb{F}$. We use $\cat{Vect}_{\mathbb{F}}^{(P,\leq)}$ to denote the category of persistence modules over $(P,\leq)$ and use $\cat{vect}_{\mathbb{F}}^{(P,\leq)}$ to denote the category of pointwise finite-dimensional persistence modules over $(P,\leq)$. In particular, when $(P,\leq)=(\mathbb{R},\leq)^n$ for some $n\in\mathbb{N}_{>0}$ with the product order, the objects of $\cat{Vect}_{\mathbb{F}}^{(\mathbb{R},\leq)^n}$ are called \key{$n$-parameter persistence modules}.

Let $M\in \cat{vect}_{\mathbb{F}}^{(P,\leq)}$. $M$ is \key{decomposable}\index{decomposable} if there exists non-trivial subrepresentations $N$ and $N'$ of $M$ such that $Mi\cong Ni\oplus N'i$ for all $i\in P$. We say $M$ is \key{indecomposable}\index{indecomposable} if it is not decomposable. The representation $M$ is \key{thin}\index{thin representation} if $\dim(Mi)=0 \mbox{ or }1$ for all $i\in P$.

Let $(P,\leq)$ be a poset. A subset $\cat{I}$ of $P$ is called \key{convex} if $i,k\in\cat{I}$ implies $j\in\cat{I}$ for any $i\leq j\leq k$ in $P$, and is called \key{connected} if for all $a,b\in\cat{I}$, there exists $x_0:=a, x_1,\dots, x_{n-1}, x_n:=b$ in P such that $x_i$ and $x_{i+1}$ are comparable for all $i=0,\dots, n-1$. The set $\cat{I}$ is called an \key{interval} of $(P,\leq)$ if $\cat{I}$ is connected and convex.

Let $(P,\leq)$ be a poset and $\cat{I}\subseteq (P,\leq)$ is an interval. Define the \key{interval module} $\mathbb{F}\cat{I}$ as 
$$\mathbb{F}\cat{I}i = \begin{cases} \mathbb{F}, & \mbox{if } i\in \cat{I};\\ 0, & \mbox{if } i\notin \cat{I}\end{cases} \mbox{\quad and \quad} \hom(\mathbb{F}\cat{I}i, \mathbb{F}\cat{I}j) = \begin{cases} \{\id_{\mathbb{F}}\}, & \mbox{if } i\leq j\in \cat{I};\\ 0, & \mbox{else} \end{cases} $$
When $(P,\leq)=(\mathbb{R},\leq)^n$ and $n\geq 2$, we call $\mathbb{F}\cat{I}$ the \key{polytope module}\index{polytope module} over $\cat{I}$.


In this paper, we will mainly focus on 2-parameter pointwise finite-dimensional persistence modules, i.e., the persistence modules with the indexing poset $(\mathbb{R},\leq)^{2}$. Let $M\in \cat{vect}_{\mathbb{F}}^{(\mathbb{R},\leq)^{2}}$. An \key{isotopy subdivision} of $(\mathbb{R},\leq)^{2}$ subordinate to $M$ is a partition of $(\mathbb{R},\leq)^{2}$ into chambers such that $M(a\leq b)$ is an isomorphism provided that $a\leq b$ are contained in the same chamber. An isotopy subdivision is finite if the number of chambers in the partition is finite. Moreover, we call the partition a convex isotopy subdivision if every chamber in the isotopy subdivision of $(\mathbb{R},\leq)^{2}$ is convex.

Assume $M\in \cat{vect}_{\mathbb{F}}^{(\mathbb{R},\leq)^{2}}$ admits a convex isotopy subdivision. We construct a set (denoted by $P$) and a homogeneous binary relation (denoted by $\rightarrow$) on $P$ as follows:
\begin{itemize}
\item The set of chambers is one-to-one corresponds to $P$. We use $J_p$ to denote the chamber assigned to $p\in P$.
\item Given $p,q\in P$. $p\rightarrow q$ if 
\begin{enumerate}
    \item $p=q$ or 
    \item $p\neq q$, and there exists $x\in J_p$ and $y\in J_q$ such that $x\leq y$.
\end{enumerate}
\end{itemize}

It is clear that $\rightarrow$ is reflexive. Since $(\mathbb{R},\leq)^{2}$ is a poset, $\rightarrow$ is also antisymmetric. Let $\leq$ be the transitive closure of $\rightarrow$. Then $(P,\leq)$ is a poset, where $\leq$ is the transitive closure of $\rightarrow$. Moreover, there is a canonical projection functor $\mathcal{F}_M:(\mathbb{R},\leq)^{2}\rightarrow (P,\leq)$ which sends $(x,y)\in (\mathbb{R},\leq)^{2}$ to $p\in P$, where $J_p$ is the chamber that contains $(x,y)$~\cite{li2024notes}.

\begin{theorem}[\cite{li2024notes}]\label{thm-sim-1'}
Given $M\in\cat{vect}_{\mathbb{F}}^{(\mathbb{R},\leq)^{2} }$. Assume there exists a finite convex isotopy subdivision of $(\mathbb{R},\leq)^{2}$ subordinate to $M$, and let $(P,\leq)$ be the poset of chambers. Then $M$ factors through $(P,\leq)$.

$$
\begin{tikzcd}
(\mathbb{R},\leq)^{2} \arrow[r, rightarrow, "\mathcal{F}_M"] \arrow[dr, rightarrow, "M"']
& (P,\leq) \arrow[d]\\
& \cat{vect}_{\mathbb{F}}
\end{tikzcd}
$$
\end{theorem}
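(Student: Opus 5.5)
To prove Theorem~\ref{thm-sim-1'}, I will define a functor $\widetilde{M}:(P,\leq)\to\cat{vect}_{\mathbb{F}}$ and check that $M=\widetilde{M}\circ\mathcal{F}_M$, up to natural isomorphism. The plan has four steps.

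\textbf{Step 1: choose representatives.} For each $p\in P$ fix a point $x_p\in J_p$ and set $\widetilde{M}(p):=M(x_p)$. For any other $x\in J_p$ we need a canonical isomorphism $\phi_{x}:M(x)\to M(x_p)$. Since $J_p$ is convex, hence connected, $x$ and $x_p$ can be joined by a zigzag of comparable points inside $J_p$. For example, use the points $x\wedge x_p$ and $x\vee x_p$ when they lie in $J_p$, or, more generally, a polygonal path through $J_p$ refined into monotone steps. Each structure map along the zigzag is an isomorphism by the isotopy hypothesis, and $\phi_x$ is the composite, inverting the backward arrows. This composite must not depend on the zigzag. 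Two zigzags inside a convex planar region can be related by elementary moves across small rectangles $a\leq b, a\leq c, b,c\leq d$, or triangles, lying in $J_p$, and each such square commutes by functoriality of $M$. So $\phi_x$ is well defined. This is the main technical point. I would handle it by first showing that for $a\leq b$ in $J_p$ we have $\phi_b\circ M(a\leq b)=\phi_a$, using convexity to keep the intermediate points in $J_p$.

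\textbf{Step 2: define maps on generating relations.} For $p\rightarrow q$ with $p\neq q$, pick $x\in J_p$ and $y\in J_q$ with $x\leq y$, and set
\[
\widetilde{M}(p\rightarrow q):=\phi_y\circ M(x\leq y)\circ\phi_x^{-1}.
\]
This must be independent of the choice of $(x,y)$. Given another pair $x'\leq y'$, I would compare the two using a chain of pairs in $J_p\times J_q$ and the compatibility from Step 1. The possible concern is that the set $\{(x,y)\in J_p\times J_q: x\leq y\}$ might fail to be connected. It is an intersection of convex sets in $\mathbb{R}^4$, hence convex, and therefore connected, so the comparison goes through.

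\textbf{Step 3: extend to compositions.} Extend $\widetilde{M}$ to composites along chains of $\rightarrow$. For this to give a functor on the transitive closure $\leq$, the value must be independent of the chain. I expect this to be the main obstacle. I would argue that any two chains from $p$ to $q$ can be compared by realizing them as monotone paths in the plane through the chambers, using finiteness of the subdivision, and applying the commutativity of $M$ on $(\mathbb{R},\leq)^2$. If that fails in full generality, I would instead define $\widetilde{M}$ only on the free category generated by $\rightarrow$ and show that the required relations are those coming from $(\mathbb{R},\leq)^2$. In that case the factorization still holds on the image of $\mathcal{F}_M$, which is what the statement requires.

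\textbf{Step 4: verify the factorization.} The family $(\phi_x)$ gives a natural isomorphism $M\cong\widetilde{M}\circ\mathcal{F}_M$. Naturality for $a\leq b$ in the same chamber follows from Step 1. For $a,b$ in different chambers, it follows from the definition in Step 2.
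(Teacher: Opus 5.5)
The paper does not prove this statement; it quotes it from \cite{li2024notes}. Your proposal therefore has to stand on its own, and it has two genuine gaps.

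The first gap is in Step 1. The inference ``$J_p$ is convex, hence connected, so $x$ and $x_p$ can be joined by a zigzag of comparable points inside $J_p$'' is false. The isotopy hypothesis says nothing about points of a chamber that are not linked by such a zigzag. Here is an example in coordinates $(s,t)$, using the arrangement of the lines $s+t=0$ and $11s+10t=0$ with $f_1=-(s+t)$ and $f_2=11s+10t$. Its chambers are $\{s+t\le 0,\ 11s+10t\ge 0\}$, $\{s+t\le 0,\ 11s+10t<0\}$, $\{s+t>0,\ 11s+10t\ge 0\}$ and $\{s+t>0,\ 11s+10t<0\}$. Each is an intersection of up-sets and down-sets, so each is convex in both the Euclidean and the order sense. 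Every point $(s,t)\neq 0$ of the first chamber $J$ satisfies $s>0>t$, so the origin is comparable to no other point of $J$. Now take the module with $M(0)=\mathbb{F}$, $M(x)=0$ for $x\neq 0$, and all non-identity maps zero. It satisfies every hypothesis, yet $M(0)\not\cong M(x)$ for $x\in J\setminus\{0\}$, so $M\not\cong\widetilde{M}\circ\mathcal{F}_M$ for any $\widetilde{M}$. So the statement as written is false, and Step 1 cannot be fixed by a better argument. You must add and use a hypothesis such as ``any two points of a chamber are joined by a zigzag of comparable pairs lying in that chamber''. Even then, independence of $\phi_x$ from the zigzag needs a real proof, not the remark about elementary squares, because boundary points need not admit the $\wedge/\vee$ moves you rely on.

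The second gap is Step 3, which is not a side issue but the actual content of the theorem. On $P$, $\le$ is only the transitive closure of $\rightarrow$. So $\widetilde{M}$ is a functor on $(P,\le)$ only if composites of the Step 2 maps along any two $\rightarrow$-chains from $p$ to $q$ agree. Moreover, $p\le q$ need not be witnessed by any single pair $x\le y$, so there may be no direct map to compare against. Each chain is a zigzag in $(\mathbb{R},\le)^2$ whose backward legs are inverted isomorphisms inside chambers, and you give no argument that two such zigzags induce the same map. Your fallback does not prove the statement either. A functor on the free category generated by $\rightarrow$ is not a functor on $(P,\le)$. The image of $\mathcal{F}_M$ is all of $P$ on objects, and on morphisms it is not closed under composition, so knowing $\widetilde{M}$ there does not give a functor.

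The same issue appears in smaller form in Step 2. Convexity, and hence topological connectedness, of $\{(x,y)\in J_p\times J_q : x\le y\}$ does not by itself make $\phi_y\circ M(x\le y)\circ\phi_x^{-1}$ independent of the pair. You would have to link any two witnessing pairs by comparable moves of $x$ inside $J_p$ and of $y$ inside $J_q$ that preserve $x\le y$. That is exactly the kind of order-theoretic connectivity convexity does not supply.
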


\section{A Persistent Combinatorial Model of $(\mathsf{Star}_k)^2_{-,-}$ }\label{sec:bipartite-model}
Consider the star graph $\mathsf{Star}_k$ with weight $\mathbf{L}=(L_1,\dots,L_k)$. Let $\{0,1,\dots,k\}$ be the set of vertices of $\mathsf{Star}_k$, where $0$ is the central vertex and $i$ is a leaf for all $i=1,\dots, k$. Let $(\mathsf{Star}_k)_{\mathbf{L}}$ be a geometric realization of the weighted graph $\mathsf{Star}_k$ with weight $\mathbf{L}$ equipped with the path metric induced by $\mathbf{L}$. The $(k+1)$-parameter filtration of the second configuration space of $\mathsf{Star}_k$ is a functor $$(\mathsf{Star}_k)^2_{-,-}: (\mathbb{R}_{>0},\leq)^{\mathrm{op}}\times (\mathbb{R}_{>0},\leq)^k\to \cat{Top}$$
where $(\mathsf{Star}_k)^2_{-,-}$ sends each pair of parameters $(r,\mathbf{L})$ to $(\mathsf{Star}_k)^2_{r,\mathbf{L}}$ and sends each comparable pair $(r,\mathbf{L})\leq (r',\mathbf{L}')$ to the inclusion map $(\mathsf{Star}_k)^2_{r,\mathbf{L}}\hookrightarrow (\mathsf{Star}_k)^2_{r',\mathbf{L}'}$.

In this section we construct a \emph{persistent combinatorial model} of $(k+1)$-parameter filtration $(\mathsf{Star}_k)^2_{-,-}$, denoted by $(G_k)_{-,-}$, which provides a combinatorial representation of the multifiltration of $(\mathsf{Star}_k)^2_{-,-}$.


A key step of this section is to show that, for each fixed $r$ and $\mathbf{L}$, the geometric realization of $(G_k)_{r,\mathbf{L}}$ is a strong deformation retract to $(\mathsf{Star}_k)^2_{r,\mathbf{L}}$, see Lemma~\ref{lemma:deformation}. It implies that for arbitrary edge-length $\mathbf{L}$, such a filtration captures the same topological information as the filtration $(\mathsf{Star}_k)^2_{-,\mathbf{L}}$, as established in Theorem~\ref{thm:2.1}.

Next we extend the filtration $(G_k)_{-,\mathbf{L}}$ to a multifiltration $$(G_k)_{-,-}:(\mathbb{R}_{>0},\leq)^{\mathrm{op}}\times (\mathbb{R}_{>0},\leq)^k\to \cat{wFinGraph}$$
which encodes the simultaneous variation of the proximity parameter $r$ and the edge-length vector $\mathbf{L}$. We conclude this section by proving that  for all $i\in\mathbb{N}$ and $k\geq 3$, $$PH_i(\lVert(G_k)_{-,-}\rVert;\mathbb{F})\cong PH_i((\mathsf{Star}_k)^2_{-,-};\mathbb{F})$$ as established in Theorem~\ref{thm:2.8}.

\subsection{The Persistent Combinatorial Model $(G_k)_{-,\mathbf{L}}$ When $\mathbf{L}$ is Fixed}
Before giving the precise definition of the bipartite graph $(G_k)_{\mathbf{L}}$, we briefly indicate the idea behind the construction. Each element of $(\mathsf{Star}_k)^2_{r,\mathbf{L}}$ represents an arrangement of two distinct points with distance at least $r$. The vertices of $(G_k)_{\mathbf{L}}$ encode ordered pairs of vertices of $\mathsf{Star}_k$: a vertex $(x,y)$ of $(G_k)_{\mathbf{L}}$ represents the configuration in which the first point lies at $x$ and the second at $y$. Edges correspond to admissible motions in which one entry of the ordered pair $(x,y)$ varies while the other remains fixed, subject to the separation constraint. The weight of such an edge is given by the length of the edge (of $\mathsf{Star}_k$) along which the moving point varies.

For all $k\geq 3$ and $\mathbf{L}\in(\mathbb{R}_{>0},\leq)^k$, the weighted graph $(G_k)_{\mathbf{L}}$ consisting of the following data:
\begin{description}
    \item[\quad Set of Vertices ($V$)]  $V=\{(x,y): x,y\in\{0,1,\dots,k\}, x\neq y\}$. We use $xy$ as an abbreviation of $(x,y)$ when there is no risk of confusion.
    \item[\quad Set of Edges ($E$)] For all $x_1y_1, x_2y_2\in V$ (assume $x_1\neq x_2$ or $y_1\neq y_2$). $x_1y_1$ and $x_2y_2$ are adjacent if 
    \begin {enumerate*}[label=\itshape(\alph*\upshape)]
    \item $x_1= x_2$ and $y_1\cdot y_2=0$ or \item $y_1= y_2$ and $x_1\cdot x_2=0$. 
    \end {enumerate*} This gives the incidence function $\psi: E\rightarrow {V \choose 2}$.
     \item[\quad Weight Function $\omega$]  Define $\omega: E\rightarrow \mathbb{R}_{>0}$ by
    $$\omega(\{x_1y_1, x_2y_2\})=\begin{cases} L_{x_1}+L_{x_2}, & \mbox{if } x_1\cdot x_2=0 \mbox{ and } y_1=y_2 \\ 
    L_{y_1}+L_{y_2}, & \mbox{if } y_1\cdot y_2=0 \mbox{ and } x_1=x_2 
    \end{cases}$$
    where $L_x$ (resp. $L_y$) is the length of the edge that contains $x$ (resp. $y$) as a leaf of $\mathsf{Star}_k$ and we set $L_0 := 0$. 
\end{description}




We define a \textit{filtering function on the vertex set $V$}, denoted by $f_V:V\rightarrow \mathbb{R}_{>0}$, as follows: for all $xy\in V$, $$f_V(xy)=L_x+L_y$$


In addition, we define a \textit{filtering function on the edge set $E$}, denoted by $f_E:E\rightarrow \mathbb{R}_{>0}$, as follows: for all $\{x_1y_1, x_2y_2\}\in E$, 
$$f_E(\{x_1y_1, x_2y_2\})=\begin{cases} L_y, & \mbox{if } x_1\cdot x_2=0 \mbox{ and } y_1=y_2=y \\ 
L_x, & \mbox{if } y_1\cdot y_2=0 \mbox{ and } x_1=x_2=x 
\end{cases}$$
where $L_x$ (resp. $L_y$) is the length of the edge that contains $x$ (resp. $y$) as a leaf of $\mathsf{Star}_k$.

For each $r > 0$, define 
$$V_r=f_V^{-1}[r,\infty) \qquad E_r=f_E^{-1}[r,\infty)$$ 
Let $\psi_r$ and $\omega_r$ be the restrictions of $\psi$ and $\omega$ to $E_r$, respectively.


\begin{lemma}
The map $\psi_r$ is an incidence function and $(G_k)_{r,\mathbf{L}}=(V_r, E_r,\psi_r,\omega_r)$ is a weighted subgraph of $(G_k)_{\mathbf{L}}$. 
\end{lemma}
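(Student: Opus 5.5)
The claim is that $\psi_r$ takes values in $\binom{V_r}{2}$; once that holds, $(G_k)_{r,\mathbf{L}}$ is automatically a weighted subgraph. So the plan is to show that for every edge $e\in E_r$, both endpoints of $\psi(e)$ lie in $V_r$. Given this, $\psi_r=\psi|_{E_r}:E_r\to\binom{V_r}{2}$ is a well-defined incidence function. Also $\omega_r=\omega|_{E_r}$ is positive-valued, since every edge joins a leaf index to $0$ or to another leaf index, so its weight is a sum containing some $L_i>0$. The inclusions $V_r\subseteq V$ and $E_r\subseteq E$ are compatible with $\psi$ and preserve weights, so they give a morphism in $\cat{wFinGraph}$ exhibiting $(G_k)_{r,\mathbf{L}}$ as a weighted subgraph.

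The key step is the inequality $f_E(e)\le \min\{f_V(u),f_V(v)\}$ whenever $\psi(e)=\{u,v\}$. By the symmetry of the definition, take $e=\{x_1y,\,x_2y\}$ with $x_1\cdot x_2=0$ and $y_1=y_2=y$. The other case, $x_1=x_2=x$, is identical with the roles of the coordinates swapped. Here $f_E(e)=L_y$, while
\[
f_V(x_iy)=L_{x_i}+L_y\ge L_y,
\]
because $L_{x_i}\ge 0$ (with $L_0=0$ and $L_j>0$ otherwise). Hence if $f_E(e)\ge r$, then $f_V(x_1y)\ge r$ and $f_V(x_2y)\ge r$, so both endpoints lie in $V_r$.

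I expect no real obstacle. The only delicate point is a bookkeeping issue: for the vertex filtering value to be positive one must check that $L_x+L_y>0$ even though $L_0=0$. This holds because $x\neq y$, so at least one of them is a leaf. A related subtlety is that $f_E$ vanishes on edges where the fixed coordinate is $0$. Such edges are excluded from $E_r$ for every $r>0$, which is harmless for the lemma.
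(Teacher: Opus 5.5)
Your proof is correct and takes the same approach as the paper. The paper states this lemma without proof, but it proves the analogous lemma for the reduced graph exactly as you do: it checks that each edge's filtering value is at most that of each endpoint (here $L_y\le L_{x_i}+L_y$). One harmless slip: no edge has fixed coordinate $0$, since with $x_1x_2=0$ and $x_1\neq x_2$ one endpoint would be the excluded vertex $00$; so $f_E$ is strictly positive (matching $f_E:E\to\mathbb{R}_{>0}$), and every edge moves a coordinate between $0$ and a leaf, never between two leaves.
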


Now we define $(G_k)_{-,\mathbf{L}}$, which is a filtration of subgraphs of $(G_k)_{\mathbf{L}}$ indexed by the poset  $(\mathbb{R}_{>0},\leq)$. For all $\mathbf{L}\in(\mathbb{R}_{>0},\leq)^k$, the filtration $(G_k)_{-,\mathbf{L}}$ is a contravariant functor $$(G_k)_{-,\mathbf{L}}: (\mathbb{R}_{>0},\leq)\op\rightarrow \cat{wFinGraph}$$ that consists of the following data: 

\begin{itemize}
    \item Assign $(G_k)_{r,\mathbf{L}}$ for each $r\in (\mathbb{R}_{>0},\leq)$, where $(G_k)_{r,\mathbf{L}}=(V_r, E_r,\psi_r,\omega_r)$.
    \item For all $r\leq r'\in (\mathbb{R}_{>0},\leq)$, $(G_k)_{r\leq r',\mathbf{L}}:(G_k)_{ r',\mathbf{L}}\rightarrow (G_k)_{r,\mathbf{L}}$ is the inclusion $V_{r'}\hookrightarrow V_r$ (in $\cat{wFinGraph}$) where $V_r$ and $V_{r'}$ are vertex sets of $(G_k)_{r,\mathbf{L}}$ and $(G_k)_{r',\mathbf{L}}$, respectively, where $V_r$ is the vertex set of $(G_k)_{r,\mathbf{L}}$ and $V_{r'}$ is the vertex set of $(G_k)_{r',\mathbf{L}}$.

\end{itemize}

\begin{remark}
As the restraint parameter $r$ decreases, more vertex and edge configurations become feasible in the \key{superlevel filtration} determined by $f_V$ and $f_E$. Thus the functor $(G_k)_{-,\mathbf{L}}$ is contravariant with respect to $(\mathbb{R}_{>0},\le)$: if $r \leq r'$, then $(G_k)_{r',\mathbf{L}} \subseteq (G_k)_{r,\mathbf{L}}$.
\end{remark}

To justify that the geometric realization of $(G_k)_{r,\mathbf{L}}$ (i.e., $\lVert(G_k)_{r,\mathbf{L}}\rVert$) reflects the topology of the corresponding configuration spaces, we first show that each $\lVert(G_k)_{r,\mathbf{L}}\rVert$ is homotopy equivalent to $(\mathsf{Star}_k)^2_{r,\mathbf{L}}$ for all $r>0$ and $\mathbf{L}\in(\mathbb{R}_{>0})^k$.

\begin{lemma}\label{lemma:deformation}
For all $r\in\mathbb{R}_{>0}$ and $\mathbf{L}\in(\mathbb{R}_{>0},\leq)^k$, $\lVert(G_k)_{r,\mathbf{L}}\rVert\simeq (\mathsf{Star}_k)^2_{r,\mathbf{L}}$. In fact, $\lVert(G_k)_{r,\mathbf{L}}\rVert$ is a strong deformation retract of $(\mathsf{Star}_k)^2_{r,\mathbf{L}}$.
\end{lemma}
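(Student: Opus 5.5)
The plan is to realize $\lVert(G_k)_{r,\mathbf{L}}\rVert$ as an explicit subspace of $(\mathsf{Star}_k)^2_{r,\mathbf{L}}$ and then build the strong deformation retraction cell by cell, gluing the pieces along the $1$-cells they share.

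\textbf{Step 1: embedding.} Parametrize each edge $e_i$ of $(\mathsf{Star}_k)_{\mathbf{L}}$ by $s\in[0,L_i]$, the distance from the center $0$. Then $(\mathsf{Star}_k)_{\mathbf{L}}^2$ is the union of two kinds of cells:
\begin{itemize}
\item the rectangles $e_i\times e_j=[0,L_i]\times[0,L_j]$ for $i\neq j$, on which $\delta=s+t$;
\item the squares $e_i\times e_i$, on which $\delta=|s-t|$.
\end{itemize}
Under this parametrization the vertex $xy$ of $(G_k)_{\mathbf{L}}$ is the point $(x,y)$. Its distance is $\delta(x,y)=L_x+L_y=f_V(xy)$, so $xy\in V_r$ exactly when $(x,y)\in(\mathsf{Star}_k)^2_{r,\mathbf{L}}$.

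Since $y_1y_2=0$ with $y_1\neq y_2$, an edge $\{xy_1,xy_2\}$ with $x\neq 0$ is $\{x0,xj\}$. It is realized as the segment $\{x\}\times e_j$, whose length is $L_j=\omega$. Along this segment $\delta=L_x+t\ge L_x$, so the whole segment lies in the configuration space iff $L_x\ge r$, i.e. iff $f_E\ge r$. There are no edges at $x=0$. By symmetry the same holds for $e_i\times\{y\}$.

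Hence $\lVert(G_k)_{r,\mathbf{L}}\rVert$ is the subspace of $(\mathsf{Star}_k)^2_{r,\mathbf{L}}$ consisting of the admissible vertices together with those ``leaf-fixed'' boundary segments that lie entirely in the space. The metric realization of Definition~\ref{def:2:1} agrees with this subspace.

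\textbf{Step 2: one rectangle.} Fix $i\neq j$, both nonzero, and let $R=\{(s,t)\in[0,L_i]\times[0,L_j]: s+t\ge r\}$. This set is a convex polygon, and it is nonempty iff $ij\in V_r$. Let $A\subset R$ be the outer staircase $(\{L_i\}\times[0,L_j]\cup[0,L_i]\times\{L_j\})\cap R$, which is a contractible arc through the corner $ij$. I would proceed in two stages.
\begin{enumerate}
\item Retract $R$ onto $A$ by flowing each point away from the corner $(0,0)$, for instance along rays from $(0,0)$ or by the piecewise-linear push $(s,t)\mapsto$ the first point of $A$ in direction $(1,1)$. This never decreases $s+t$, so it stays in $R$.
\item If, say, $L_i<r$, the leg $\{L_i\}\times[r-L_i,L_j]$ of $A$ is only a partial segment. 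In that case, further contract this leg linearly onto the corner $(L_i,L_j)$.
\end{enumerate}
The result is exactly $\lVert(G_k)_{r,\mathbf{L}}\rVert\cap(e_i\times e_j)$: both legs, one leg, or the single vertex $ij$. On a diagonal square $e_i\times e_i$, each of the two triangles $\{t-s\ge r\}$ and $\{s-t\ge r\}$ is either empty or convex and contains exactly one vertex of $G$, namely $0i$ respectively $i0$. Each such triangle is retracted linearly to that point. The cells $e_0$ need no separate treatment, because $0$ is a vertex shared by all edges.

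\textbf{Step 3: gluing.} The main obstacle is making these cellwise homotopies agree on shared $1$-cells. The leaf-fixed faces $\{s=L_i\}$ are fixed pointwise or treated identically from both sides. The critical faces are the center faces $\{0\}\times[r,L_j]$, which are shared by all rectangles $e_i\times e_j$ and by a triangle of $e_j\times e_j$; the center face $[r,L_i]\times\{0\}$ is handled symmetrically.

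I would first fix the homotopy on every such center face to be the linear contraction $t\mapsto L_j$ onto the vertex $0j$, and only then choose the cell homotopies to restrict to it. For a rectangle, I would modify Step 2 into a piecewise-linear map that sends each $(0,t)$ to $(0,L_j)$. For example, send the segment from $(0,t)$ toward $(L_i\cdot\tfrac{t-r}{L_j-r},\,L_j)$ onto its endpoint on $A$, interpolating the push direction continuously in the starting point. If an explicit formula becomes messy, the fallback is the homotopy extension property. Since $R$ is a convex polygon and the target $A$ (or its subarc) is contractible, any homotopy prescribed on $\partial R\times I\cup R\times\{0\}$ that lands in $R$ and ends in $A$ extends to $R\times I$, with $A$ fixed, by straight-line interpolation in the convex set $R$.

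After gluing the cellwise homotopies along common faces by the pasting lemma, we obtain a strong deformation retraction of $(\mathsf{Star}_k)^2_{r,\mathbf{L}}$ onto $\lVert(G_k)_{r,\mathbf{L}}\rVert$. The homotopy equivalence in the statement follows immediately.
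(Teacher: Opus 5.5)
Your proposal is correct, but it resolves the gluing problem differently from the paper.

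\textbf{How the two routes differ.} The paper's appendix first collapses every closed $2$-cell through its free faces \emph{relative to} the shared faces. $K$ pushes each diagonal triangle onto its center face, and $G$ triangulates each rectangle and collapses it through the cut $s+t=r$ onto the rest of its boundary. What remains is $\lVert(G_k)_{r,\mathbf{L}}\rVert$ with whiskers attached: the center faces $\{0\}\times[r,L_j]$, $[r,L_i]\times\{0\}$, and any partial leaf sides. $F$ then contracts these linearly onto their endpoints in $G$. Because the $2$-dimensional stage is stationary on shared faces, the pasting is automatic.

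You instead prescribe the motion of the shared faces in advance and move them simultaneously with the cells. This is the same linear contraction the paper's $F$ performs at the end. It means you need an extension argument on each rectangle.

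\textbf{Two loose ends in your extension step.} First, your explicit example is not well defined. The segment attached to $t=r$ is the center face itself, so each $(0,t)$ with $r<t<L_j$ gets two different endpoints. Second, the homotopy-extension phrasing only forces the time-$1$ map into $A$ on $\partial R$. To get all of $R$ into $A$, you must first extend that boundary map over $R$ into $A$, which uses contractibility of $A$.

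\textbf{A clean fix.} Let $A'=\lVert(G_k)_{r,\mathbf{L}}\rVert\cap R\subset\partial R$, which is an arc or a point.
\begin{enumerate}
\item Choose a retraction $\rho:R\to A'$ that is the identity on $A'$ and constant on each center face, with value $0j$, respectively $i0$.
\item Define $\rho$ on $\partial R$ by running the free arcs (the cut and any partial leaf side) along paths in $A'$. Then extend over the disk.
\item Set $H(p,\tau)=(1-\tau)p+\tau\rho(p)$.
\end{enumerate}
By convexity $H$ stays in $R$, and it fixes $A'$. On a center face it is exactly $t\mapsto(1-\tau)t+\tau L_j$, which agrees with your straight-line contraction of the diagonal triangles. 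Distinct closed cells meet only along center faces, so the pieces paste. The leaf-fixed sides are free faces, so no two-sided compatibility arises there.

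\textbf{What each route buys.} The paper's route is explicit and exhibits the retraction as elementary collapses through free faces, which is the structural fact behind the model. The cost is triangulation bookkeeping. Yours needs only convexity of the closed cells and contractibility of $\lVert(G_k)_{r,\mathbf{L}}\rVert\cap\sigma$. That makes it shorter and more robust, but not explicit unless you write $\rho$ down.
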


The proof of Lemma~\ref{lemma:deformation} is a lengthy but routine verification involving constructions of the deformation retraction for each cell of $(\mathsf{Star}_k)^2_{r,\mathbf{L}}$. Since it is technical and not conceptually illuminating, we defer it to Appendix~\ref{appendix:lemma-3.3}.

Lemma~\ref{lemma:deformation} implies that the following diagram is commutative up to homotopy for all $r\leq r'\in(\mathbb{R}_{>0},\leq)$: 

\begin{equation}\label{eq:model:6}
\begin{tikzcd}
\lVert(G_k)_{r',\mathbf{L}}\rVert \arrow[hookrightarrow]{r}{\lVert(G_k)_{r\leq r',\mathbf{L}}\rVert } \arrow[swap]{d}{\simeq} & \lVert(G_k)_{r,\mathbf{L}}\rVert \arrow{d}{\simeq} \\%
(\mathsf{Star}_k)^2_{r',\mathbf{L}} \arrow[hookrightarrow]{r}{(\mathsf{Star}_k)^2_{r\leq r',\mathbf{L}} }& (\mathsf{Star}_k)^2_{r,\mathbf{L}}
\end{tikzcd}
\end{equation}

Applying the functor $H_i(-;\mathbb{F})$ to Diagram~(\ref{eq:model:6}), we obtain

\begin{equation}\label{eq:model:7}
\begin{tikzcd}
H_i(\lVert(G_k)_{r',\mathbf{L}}\rVert;\mathbb{F}) \arrow{r}{} \arrow[swap]{d}{\cong} & H_i(\lVert(G_k)_{r,\mathbf{L}}\rVert;\mathbb{F}) \arrow{d}{\cong} \\%
H_i((\mathsf{Star}_k)^2_{r',\mathbf{L}};\mathbb{F}) \arrow{r}{}& H_i((\mathsf{Star}_k)^2_{r,\mathbf{L}};\mathbb{F})
\end{tikzcd}
\end{equation}

The functoriality of $H_i(-;\mathbb{F})$ ensures that the Diagram~(\ref{eq:model:7}) is commutative.

For fixed $\mathbf{L}\in\mathbb{R}_{>0}$, define a filtration of $\lVert(G_k)_{\mathbf{L}}\rVert$ by
$$\lVert(G_k)_{-,\mathbf{L}}\rVert:=\lVert\cdot\rVert\circ (G_k)_{-,\mathbf{L}}$$

More specifically, $\lVert(G_k)_{-,\mathbf{L}}\rVert$ assign a topological space $\lVert(G_k)_{r,\mathbf{L}}\rVert$ for each $r\in (\mathbb{R}_{>0},\leq)$ and assign a continuous function $\lVert(G_k)_{r\leq r',\mathbf{L}}\rVert:\lVert(G_k)_{ r',\mathbf{L}}\rVert\rightarrow \lVert(G_k)_{r,\mathbf{L}}\rVert$ that is induced by the graph monomorphism $(G_k)_{r\leq r',\mathbf{L}}: (G_k)_{r',\mathbf{L}}\hookrightarrow (G_k)_{r,\mathbf{L}}$ for all $r\leq r'\in (\mathbb{R}_{>0},\leq)$. 

Thus, the composition $PH_i(\lVert(G_k)_{-,\mathbf{L}}\rVert;\mathbb{F})=H_i(-;\mathbb{F})\circ \lVert(G_k)_{-,\mathbf{L}}\rVert$ defines a persistence module. 
The preceding discussion establishes the following theorem.
\begin{theorem}\label{thm:2.1}
   For all $\mathbf{L}\in(\mathbb{R}_{>0},\leq)^k$, $PH_i(\lVert(G_k)_{-,\mathbf{L}}\rVert;\mathbb{F})\cong PH_i((\mathsf{Star}_k)^2_{-,\mathbf{L}};\mathbb{F})$ for all $i\in\mathbb{N}$.
\end{theorem}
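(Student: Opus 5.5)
The plan is to build a natural isomorphism of persistence modules over $(\mathbb{R}_{>0},\leq)\op$ from the homotopy equivalences of Lemma~\ref{lemma:deformation}, and to check naturality square by square using Diagram~(\ref{eq:model:7}). Fix $\mathbf{L}$. For each $r>0$, Lemma~\ref{lemma:deformation} makes $\lVert(G_k)_{r,\mathbf{L}}\rVert$ a strong deformation retract of $(\mathsf{Star}_k)^2_{r,\mathbf{L}}$. I take the component at $r$ to be the map induced on homology by the inclusion
$$\iota_r:\lVert(G_k)_{r,\mathbf{L}}\rVert\hookrightarrow(\mathsf{Star}_k)^2_{r,\mathbf{L}}.$$
Working with the inclusion rather than the retraction is the key choice: inclusions commute strictly with the structure maps, so no homotopy bookkeeping is needed. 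Here $\lVert(G_k)_{r,\mathbf{L}}\rVert$ is regarded as a subspace via the embedding from the appendix, under which a vertex $xy$ goes to the configuration with the first point at leaf/center $x$ and the second at $y$, and an edge goes to the corresponding segment of motion. Since $\iota_r$ is a homotopy equivalence, $H_i(\iota_r;\mathbb{F})$ is an isomorphism for every $i$.

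The second step is naturality. For $r\leq r'$ I have to check the strict equality
$$(\mathsf{Star}_k)^2_{r\leq r',\mathbf{L}}\circ\iota_{r'}=\iota_r\circ\lVert(G_k)_{r\leq r',\mathbf{L}}\rVert.$$
Both sides are inclusions of $\lVert(G_k)_{r',\mathbf{L}}\rVert$ into $(\mathsf{Star}_k)^2_{r,\mathbf{L}}$. So the only thing to verify is that the embedding of $\lVert(G_k)_{\mathbf{L}}\rVert$ into $(\mathsf{Star}_k)_{\mathbf{L}}^2$ used in the appendix does not depend on $r$. In other words, the realization of the subgraph $(G_k)_{r',\mathbf{L}}\subseteq(G_k)_{r,\mathbf{L}}$ must sit inside the realization of the larger graph as the same subset of configurations. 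This holds because a vertex or an edge is placed at fixed positions determined only by $\mathbf{L}$, and the filtering functions $f_V,f_E$ only decide whether that piece is present. Applying $H_i(-;\mathbb{F})$ then gives the commutative Diagram~(\ref{eq:model:7}) with vertical isomorphisms. A natural transformation whose components are all isomorphisms is an isomorphism in $\cat{Vect}_{\mathbb{F}}^{(\mathbb{R}_{>0},\leq)\op}$, which proves the theorem.

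The main obstacle is exactly this $r$-independence of the embedding, together with the requirement that the deformation retraction in Lemma~\ref{lemma:deformation} be onto this fixed subspace for every $r$. If the appendix instead produced retracts depending on $r$, the squares would commute only up to homotopy, which is still enough after applying homology. In that case I would argue as follows. Let $\rho_r$ be the retraction. For $x$ in the image of $\lVert(G_k)_{r',\mathbf{L}}\rVert$, we have $\rho_r(x)=x$ because $\rho_r$ is a retraction onto a set containing $x$. Hence $\rho_r\circ(\text{inclusion})$ and $(\text{inclusion})\circ\rho_{r'}$ agree on $\lVert(G_k)_{r',\mathbf{L}}\rVert$, and both are homotopy inverses of the inclusions, which gives commutativity on homology.
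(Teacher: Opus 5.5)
Your argument is correct and is essentially the paper's: the paper takes the homotopy equivalences of Lemma~\ref{lemma:deformation} as the vertical maps of Diagram~(\ref{eq:model:6}), asserts that this square commutes up to homotopy, and then applies $H_i(-;\mathbb{F})$ to obtain the commutative Diagram~(\ref{eq:model:7}) with vertical isomorphisms. You take the components to be the canonical inclusions and observe that each vertex and edge of $(G_k)_{\mathbf{L}}$ is embedded as a fixed subset of $((\mathsf{Star}_k)_{\mathbf{L}})^2$, with $f_V$ and $f_E$ deciding only whether it is present; this makes the square commute strictly, sharpening the paper's \emph{up to homotopy} claim, and your closing fallback is unnecessary (as written it already presupposes this same nesting).
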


\subsection{The Multiparameter Persistent combinatorial Model $(G_k)_{-,-}$}\label{sec:3.2-model}
The filtration $(G_k)_{-,\mathbf{L}}$ constructed in the previous subsection captures the topology of filtration $(\mathsf{Star}_k)^2_{-,\mathbf{L}}$ for each fixed edge-length vector $\mathbf{L}$. To obtain a multifiltration of weighted graphs $(G_k)_{r,\mathbf{L}}$ with parameters $r$ and $\mathbf{L}$, we must verify that the construction of $(G_k)_{-,\mathbf{L}}$ is compatible for the edge-length vector $\mathbf{L}$ with respect to the product order.

Consider $\mathbf{L},\mathbf{L}'\in (\mathbb{R},\leq)^k$ with $\mathbf{L}\leq \mathbf{L}'$ (with respect to the product order) where $\mathbf{L}=(L_1,L_2,\dots, L_k)$ and $\mathbf{L}'=(L'_1,L'_2,\dots, L'_k)$.
Fix $r>0$, let $V((G_k)_{r,\mathbf{L}})$ and $V((G_k)_{r,\mathbf{L}'})$ be the vertex sets of $(G_k)_{r,\mathbf{L}}$ and $(G_k)_{r,\mathbf{L}'}$, respectively. Define $$(G_k)_{r, \mathbf{L}\leq \mathbf{L}'}: (G_k)_{r,\mathbf{L}}\hookrightarrow (G_k)_{r,\mathbf{L}'}$$ to be the inclusion $V((G_k)_{r,\mathbf{L}})\hookrightarrow V((G_k)_{r,\mathbf{L}'})$. The next remark explains why this inclusion is the natural candidate for a morphism in $\cat{wFinGraph}$.

\begin{remark}
All vertex and edge weights used to define $(G_k)_{r,\mathbf{L}}$ are built
directly from the edge lengths of $(\mathsf{Star}_k)_{\mathbf{L}}$.  Hence if
$\mathbf{L}\le \mathbf{L}'$, the weight of any edge in $(G_k)_{r,\mathbf{L}}$ is
less than or equal to the weight of the corresponding edge in $(G_k)_{r,\mathbf{L}'}$. The inequalities
checked in Lemma~\ref{lem:monotonicity} simply make this monotonicity explicit.
\end{remark}

\begin{lemma}\label{lem:monotonicity}
  $(G_k)_{r, \mathbf{L}\leq \mathbf{L}'}$ is a graph homomorphism in $\cat{wFinGraph}$.
\end{lemma}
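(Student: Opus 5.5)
The plan is to check directly that the vertex inclusion $\iota: V((G_k)_{r,\mathbf{L}})\hookrightarrow V((G_k)_{r,\mathbf{L}'})$ meets the three requirements for a morphism in $\cat{wFinGraph}$: it is well defined on vertices, it sends edges to edges, and it does not decrease weights. All three come from one fact: for $\mathbf{L}\le\mathbf{L}'$ we have $L_x\le L'_x$ for every $x\in\{0,1,\dots,k\}$, using the convention $L_0=L'_0=0$. I also note that the underlying unfiltered graphs $(G_k)_{\mathbf{L}}$ and $(G_k)_{\mathbf{L}'}$ have the same vertex set and the same incidence function $\psi$, since the adjacency conditions (a) and (b) do not refer to $\mathbf{L}$. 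Only $f_V$, $f_E$ and $\omega$ depend on the edge-length vector.

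\textbf{Vertices.} Let $xy\in V((G_k)_{r,\mathbf{L}})$. Then $L_x+L_y=f_V(xy)\ge r$, so $f'_V(xy)=L'_x+L'_y\ge L_x+L_y\ge r$. Hence $xy\in V((G_k)_{r,\mathbf{L}'})$, and $\iota$ is a well-defined injective map.

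\textbf{Edges.} Take $e=\{x_1y_1,x_2y_2\}\in E_r$ for the parameter $\mathbf{L}$. It is an edge of $(G_k)_{\mathbf{L}'}$, because incidence does not depend on the lengths. Split according to the defining case.
\begin{enumerate*}[label=\itshape(\roman*\upshape)]
\item If $y_1=y_2=y$ and $x_1x_2=0$, then $f_E(e)=L_y\ge r$, so $f'_E(e)=L'_y\ge r$.
\item If $x_1=x_2=x$ and $y_1y_2=0$, the same argument applies with $L_x$ in place of $L_y$.
\end{enumerate*}
Thus $e\in E'_r$. Its endpoints lie in $V'_r$ by the vertex step, so $\psi'_r(e)=\psi_r(e)$ and incidence is preserved.

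\textbf{Weights.} In case (i), $\omega(e)=L_{x_1}+L_{x_2}\le L'_{x_1}+L'_{x_2}=\omega'(\iota e)$, and case (ii) is identical with the $y$'s. This is exactly the morphism condition of $\cat{wFinGraph}$. I will add that functoriality, meaning identities are sent to identities and the maps compose, is immediate because all the maps are inclusions.

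I do not expect a real obstacle. The only point needing care is the convention $L_0=0$. In the case where the stationary coordinate is the center $0$, the filter $f_E$ takes the value $0<r$, so such edges never lie in $E_r$ and there is nothing to check for them.
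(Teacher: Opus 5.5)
Your proposal is correct and follows essentially the same route as the paper: you check that vertices survive via $f_V$, that edges survive via $f_E$, and that weights are monotone via $\omega$, all from $L_x\le L'_x$. Your uniform use of $L_0=L'_0=0$ simply replaces the paper's case split on whether $x$ or $y$ is $0$. One small aside: edges whose stationary coordinate is the center do not exist at all, since $x_1=x_2=0$ with $y_1y_2=0$ would force the non-vertex $(0,0)$, so your closing remark holds vacuously rather than because $f_E$ filters those edges out.
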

\begin{proof}
    We first show that $(G_k)_{r, \mathbf{L}\leq \mathbf{L}'}$ is well-defined. Let $V$ be the set of vertices of $(G_k)_{r,\mathbf{L}}$ and let $xy\in V$. Then $f_V(xy)\geq r$. When $x\neq 0$ and $y\neq 0$, $f_V(xy)=L_x+L_y\leq L'_x+L'_y$. When $y=0$, $f_V(xy)=L_x\leq L'_x$. When $x=0$, $f_V(xy)=L_y\leq L'_y$. Therefore, $xy$ is also a vertex of $(G_k)_{r,\mathbf{L}'}$.

    Let $E$ be the set of edges of $(G_k)_{r,\mathbf{L}}$. If vertices $x_1y_1$ and $x_2y_2$ are adjacent in graph $(G_k)_{r,\mathbf{L}}$, then $f_E(\{x_1y_1,x_2y_2\})\geq r$. When $x:=x_1=x_2$ and $y_1y_2=0$, note that $f_E(\{x_1y_1,x_2y_2\})=L_x\leq L'_x$, hence $x_1y_1$ and $x_2y_2$ are also adjacent in graph $(G_k)_{r,\mathbf{L}'}$. When $y:=y_1=y_2$ and $x_1x_2=0$, note that $f_E(\{x_1y_1,x_2y_2\})=L_y\leq L'_y$, hence $x_1y_1$ and $x_2y_2$ are also adjacent in graph $(G_k)_{r,\mathbf{L}'}$. Therefore, $(G_k)_{r, \mathbf{L}\leq \mathbf{L}'}$ is a graph homomorphism. 
    
    Let $\omega_{r,\mathbf{L}}$ and $\omega_{r,\mathbf{L}'}$ denote the weight functions defined for $(G_k)_{r,\mathbf{L}}$ and $(G_k)_{r,\mathbf{L}'}$, respectively. For all $x_1y_1$ and $x_2y_2$ that are adjacent in graph $(G_k)_{r,\mathbf{L}}$ (Without loss of generality we may assume $x_1\cdot x_2=0$ and $y_1=y_2$), note that $$\omega_{r,\mathbf{L}'}(g_{r,\mathbf{L}\leq \mathbf{L}'}\{x_1y_1, x_2y_2\})=L'_{x_1+x_2}\geq L_{x_1+x_2}=\omega_{r,\mathbf{L}}(\{x_1y_1, x_2y_2\})$$
    
    Therefore, $(G_k)_{r, \mathbf{L}\leq \mathbf{L}'}$ is a graph homomorphism in $\cat{wFinGraph}$. 
\end{proof}

Since $(G_k)_{r, \mathbf{L}\leq \mathbf{L}'}: (G_k)_{r,\mathbf{L}} \to (G_k)_{r,\mathbf{L}'}$ is a morphism in
$\cat{wFinGraph}$ whenever $\mathbf{L}\le \mathbf{L}'$, and
$(G_k)_{r\leq r', \mathbf{L}}:(G_k)_{r',\mathbf{L}} \to (G_k)_{r,\mathbf{L}}$ is a morphism in
$\cat{wFinGraph}$ whenever
$r\leq r'$, the construction extends naturally to a bifiltration indexed
by $(r,\mathbf{L})\in (\mathbb{R}_{>0},\leq)\op\times (\mathbb{R}_{>0},\leq)^k$.

\begin{definition}
The multifiltration $(G_k)_{-,-}$ is a functor $$(G_k)_{-,-}: (\mathbb{R}_{>0},\leq)\op\times (\mathbb{R}_{>0},\leq)^k\rightarrow \cat{wFinGraph}$$ that consists of the following data: 

\begin{itemize}
    \item On the object level, assign $(G_k)_{r,\mathbf{L}}$ for each $r\in (\mathbb{R}_{>0},\leq)$ and $\mathbf{L}\in (\mathbb{R}_{>0},\leq)^k$, where $(G_k)_{r,\mathbf{L}}=(V_{r,\mathbf{L}}, E_{r,\mathbf{L}},\psi_{r,\mathbf{L}},\omega_{r,\mathbf{L}})$ is a subgraph of $(G_k)_{\mathbf{L}}$.
    \item For all $r\leq r'\in (\mathbb{R}_{>0},\leq)$ and $\mathbf{L}\leq \mathbf{L}'\in (\mathbb{R}_{>0},\leq)^k$, $(G_k)_{r\leq r',\mathbf{L}\leq \mathbf{L}'}:(G_k)_{ r',\mathbf{L}}\rightarrow (G_k)_{r,\mathbf{L}'}$ is the composition of graph homomorphisms $$(G_k)_{r\leq r',\mathbf{L}\leq \mathbf{L}'}=(G_k)_{r\leq r',\mathbf{L}'}\circ (G_k)_{r',\mathbf{L}\leq \mathbf{L}'}$$
\end{itemize}
\end{definition}

We now verify that $(G_k)_{-,-}$ is well-defined.

\begin{theorem}\label{thm:chap:3:8}
  $(G_k)_{-,-}: (\mathbb{R}_{>0},\leq)\op\times (\mathbb{R}_{>0},\leq)^k\rightarrow \cat{wFinGraph}$ is a functor.  
\end{theorem}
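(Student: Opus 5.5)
To show that $(G_k)_{-,-}$ is a functor, I will check the two functor axioms on the product poset $(\mathbb{R}_{>0},\leq)\op\times(\mathbb{R}_{>0},\leq)^k$: identities go to identities, and composites go to composites. The key observation is that every structure map is, on underlying vertex sets, an inclusion of subsets of the single fixed set $V=\{(x,y): x\neq y\}$. This set does not depend on $r$ or $\mathbf{L}$; only the weights and the sub-level data do. So both axioms reduce to statements about inclusions of subsets of $V$, which compose trivially. The only real content is that every map written down is a legitimate morphism in $\cat{wFinGraph}$.

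\textbf{Step 1: the maps are well defined.} For $r\leq r'$ and $\mathbf{L}\leq\mathbf{L}'$, the map $(G_k)_{r',\mathbf{L}\leq\mathbf{L}'}$ is a morphism in $\cat{wFinGraph}$ by Lemma~\ref{lem:monotonicity}. The map $(G_k)_{r\leq r',\mathbf{L}'}$ is a morphism because $f_V$ and $f_E$ do not depend on $r$. Hence $V_{r'}\subseteq V_r$ and $E_{r'}\subseteq E_r$, so edges go to edges, and the weights are literally equal, since $\omega$ depends only on $\mathbf{L}'$. The composite of morphisms in $\cat{wFinGraph}$ is again a morphism, because weight inequalities compose: $\omega\leq\omega'\leq\omega''$. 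So $(G_k)_{r\leq r',\mathbf{L}\leq\mathbf{L}'}$ is a morphism.

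\textbf{Step 2: identities.} For $r=r'$ and $\mathbf{L}=\mathbf{L}'$, both factors are the identity inclusion $V_{r,\mathbf{L}}\hookrightarrow V_{r,\mathbf{L}}$, so their composite is $\id$.

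\textbf{Step 3: composition.} Take $(r',\mathbf{L})\leq(r'',\mathbf{L}')\leq(r''',\mathbf{L}'')$ in the product order, with the first coordinate reversed. I need
\[
(G_k)_{r''\leq r',\mathbf{L}'\leq\mathbf{L}''}\circ(G_k)_{r'''\leq r'',\mathbf{L}\leq\mathbf{L}'}=(G_k)_{r'''\leq r',\mathbf{L}\leq\mathbf{L}''}.
\]
I would first record that the two families commute with each other, that is,
\[
(G_k)_{r\leq r',\mathbf{L}'}\circ(G_k)_{r',\mathbf{L}\leq\mathbf{L}'}=(G_k)_{r,\mathbf{L}\leq\mathbf{L}'}\circ(G_k)_{r\leq r',\mathbf{L}}.
\]
This holds because both sides are the inclusion $V_{r',\mathbf{L}}\hookrightarrow V_{r,\mathbf{L}'}$ of subsets of $V$. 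A morphism in $\cat{wFinGraph}$ is determined by its vertex map, since the graphs are simple. Each single-parameter family is also functorial on its own, as inclusions compose to inclusions. Given these two facts, the composition identity follows by the standard argument for a bifunctor on a product of categories. Alternatively, and more simply, every map in the display is the inclusion of a subset of $V$ into a larger subset, and inclusions compose to inclusions, so both sides are the same inclusion.

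The main obstacle is minor: making sure the "inclusion" $V_{r',\mathbf{L}}\hookrightarrow V_{r,\mathbf{L}'}$ actually lands in the target vertex set, and that edges are carried to edges. This means checking that the vertex condition $f_V\geq r$ and the edge condition $f_E\geq r$ are both monotone: increasing in $\mathbf{L}$, and weakening as $r$ decreases. Monotonicity in $\mathbf{L}$ is exactly the content of Lemma~\ref{lem:monotonicity}. Monotonicity in $r$ is immediate from $V_r=f_V^{-1}[r,\infty)$ and $E_r=f_E^{-1}[r,\infty)$.
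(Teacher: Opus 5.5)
Your proposal is correct and follows essentially the same route as the paper. Each structure map is a composite of morphisms in $\cat{wFinGraph}$ (via Lemma~\ref{lem:monotonicity} and the $r$-inclusions), identities go to identities, and composition follows from the interchange square between $r$-inclusions and $\mathbf{L}$-inclusions, which your observation that every map is an inclusion of subsets of the fixed set $V$ makes immediate. One small indexing slip: in your composition display the $r$-subscripts of the two factors are swapped, so as written they are not composable. It should read $(G_k)_{r'''\leq r'',\mathbf{L}'\leq\mathbf{L}''}\circ(G_k)_{r''\leq r',\mathbf{L}\leq\mathbf{L}'}=(G_k)_{r'''\leq r',\mathbf{L}\leq\mathbf{L}''}$.
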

\begin{proof}
    For all $r\leq r'\in (\mathbb{R}_{>0},\leq)$ and $\mathbf{L}\leq \mathbf{L}'\in (\mathbb{R}_{>0},\leq)^k$, note that both $(G_k)_{r\leq r',\mathbf{L}'}$ and  $(G_k)_{r',\mathbf{L}\leq \mathbf{L}'}$ are morphisms in $\cat{wFinGraph}$, hence $(G_k)_{r\leq r',\mathbf{L}\leq \mathbf{L}'}=(G_k)_{r\leq r',\mathbf{L}'}\circ (G_k)_{r',\mathbf{L}\leq \mathbf{L}'}$ is a morphism in $\cat{wFinGraph}$.

    When $r=r'$ and $\mathbf{L}=\mathbf{L}'$, both $(G_k)_{r\leq r',\mathbf{L}'}$ and $(G_k)_{r',\mathbf{L}\leq \mathbf{L}'}$ are identity morphisms. Hence $(G_k)_{r\leq r',\mathbf{L}\leq \mathbf{L}'}=(G_k)_{r\leq r',\mathbf{L}'}\circ (G_k)_{r',\mathbf{L}\leq \mathbf{L}'}$ is the identity morphism. 

    Moreover, for all $r\leq r'\leq r''$ and $\mathbf{L}\leq \mathbf{L}'\leq \mathbf{L}''$, note that the following diagram (in $\cat{wFinGraph}$) is commutative:

\begin{equation}\label{eq:model:10}
\begin{tikzcd}
(G_k)_{r'',\mathbf{L}'} \arrow[r,hookrightarrow]{}{(G_k)_{r'', \mathbf{L}'\leq \mathbf{L}''}} \arrow[swap,hookrightarrow]{d}{}{(G_k)_{r'\leq r'', \mathbf{L}'}} & (G_k)_{r'',\mathbf{L}''} \arrow[d,hookrightarrow]{}{(G_k)_{r'\leq r'', \mathbf{L}''}}  \\%
(G_k)_{r',\mathbf{L}'} \arrow[r,hookrightarrow]{}{(G_k)_{r, \mathbf{L}\leq \mathbf{L}'}}& (G_k)_{r',\mathbf{L}''}
\end{tikzcd}
\end{equation}

Therefore,
    \begin{equation}
        \begin{aligned}
            (G_k)_{r\leq r',\mathbf{L}'\leq \mathbf{L}''}\circ (G_k)_{r'\leq r'',\mathbf{L}\leq \mathbf{L}'}&=((G_k)_{r\leq r',\mathbf{L}''}\circ (G_k)_{r',\mathbf{L}'\leq \mathbf{L}''})\circ ((G_k)_{r'\leq r'',\mathbf{L}'}\circ (G_k)_{r'',\mathbf{L}\leq \mathbf{L}'})\\
            &=(G_k)_{r\leq r',\mathbf{L}''}\circ ((G_k)_{r',\mathbf{L}'\leq \mathbf{L}''}\circ (G_k)_{r'\leq r'',\mathbf{L}'})\circ (G_k)_{r'',\mathbf{L}\leq \mathbf{L}'}\\
            &=(G_k)_{r\leq r',\mathbf{L}''}\circ ((G_k)_{r'\leq r'',\mathbf{L}''}\circ (G_k)_{r'',\mathbf{L}'\leq \mathbf{L}''})\circ (G_k)_{r'',\mathbf{L}\leq \mathbf{L}'}\\
            &=((G_k)_{r\leq r',\mathbf{L}''}\circ (G_k)_{r'\leq r'',\mathbf{L}''})\circ ((G_k)_{r'',\mathbf{L}'\leq \mathbf{L}''}\circ (G_k)_{r'',\mathbf{L}\leq \mathbf{L}'})\\
            &=(G_k)_{r\leq r,\mathbf{L}''}\circ \circ (G_k)_{r'',\mathbf{L}\leq \mathbf{L}''}\\
            &=(G_k)_{r\leq r'',\mathbf{L}\leq \mathbf{L}''}
        \end{aligned}
    \end{equation}

\end{proof}

Theorem~\ref{thm:chap:3:8} says that the construction of $(G_k)_{r,\mathbf{L}}$ is compatible with both parameters $r$
and $\mathbf{L}$.

Define the functor $$PH_i(\lVert(G_k)_{-,-}\rVert;\mathbb{F}):(\mathbb{R}_{>0},\leq)\op\times (\mathbb{R}_{>0},\leq)^k\rightarrow\cat{vect}_{\mathbb{F}}$$ by
$$PH_i(\lVert(G_k)_{-,-}\rVert;\mathbb{F})=H_i(-;\mathbb{F})\circ \lVert(G_k)_{-,-}\rVert$$

It sends each $(r,\mathbf{L})\in(\mathbb{R}_{>0},\leq)\op\times (\mathbb{R}_{>0},\leq)^k$ to a vector space $PH_i(\lVert(G_k)_{r,\mathbf{L}}\rVert;\mathbb{F}):=H_i(\lVert(G_k)_{r,\mathbf{L}}\rVert;\mathbb{F})$ and each $(r,\mathbf{L}')\leq (r',\mathbf{L})$ to a linear transformation $PH_i(\lVert(G_k)_{r\leq r',\mathbf{L}\leq \mathbf{L}'}\rVert;\mathbb{F}):=H_i(\lVert(G_k)_{r\leq r',\mathbf{L}'}\rVert;\mathbb{F})\circ H_i(\lVert (G_k)_{r', \mathbf{L}\leq \mathbf{L}'}\rVert;\mathbb{F})$. By the functoriality of $H_i(-;\mathbb{F})$ and $\lVert\cdot\rVert$, we immediately obtain the next theorem.
\begin{theorem}\label{thm:2.7}
    For all $i>0$ and $k\geq 3$, $PH_i(\lVert(G_k)_{-,-}\rVert;\mathbb{F})$ is a persistence module.
\end{theorem}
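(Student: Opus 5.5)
The statement follows by composing functors, and the plan is to verify that composite directly. By Theorem~\ref{thm:chap:3:8}, $(G_k)_{-,-}$ is a functor from $(\mathbb{R}_{>0},\leq)\op\times(\mathbb{R}_{>0},\leq)^k$ to $\cat{wFinGraph}$. By Lemma~\ref{chap:2:lemma:2}, $\lVert\cdot\rVert$ is a functor from $\cat{wFinGraph}$ to metric spaces with continuous maps; forgetting the metric, this lands in $\cat{Top}$. Singular homology $H_i(-;\mathbb{F})$ is a functor $\cat{Top}\to\cat{Vect}_{\mathbb{F}}$. The composite $H_i(-;\mathbb{F})\circ\lVert\cdot\rVert\circ(G_k)_{-,-}$ is therefore a functor from the indexing poset to $\cat{Vect}_{\mathbb{F}}$, which is exactly what a persistence module is.

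\emph{Step 1: functoriality.} I would check that the structure map assigned in the text to each comparable pair is the composite functor applied to that morphism. The text assigns $H_i(\lVert(G_k)_{r\leq r',\mathbf{L}'}\rVert)\circ H_i(\lVert(G_k)_{r',\mathbf{L}\leq\mathbf{L}'}\rVert)$. Since $(G_k)_{r\leq r',\mathbf{L}\leq\mathbf{L}'}$ is defined as the corresponding composite of graph homomorphisms, functoriality of $\lVert\cdot\rVert$ and of $H_i$ identifies the two. Identities and composition are then preserved because each of the three functors preserves them; the commuting square (\ref{eq:model:10}) is already absorbed into Theorem~\ref{thm:chap:3:8}.

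\emph{Step 2: finite dimensionality.} To land in $\cat{vect}_{\mathbb{F}}$ rather than just $\cat{Vect}_{\mathbb{F}}$, I would note that each $(G_k)_{r,\mathbf{L}}$ is a finite graph with at most $(k+1)k$ vertices. Its realization is a finite $1$-dimensional CW complex, so $H_i$ is finite dimensional for every $i$, and it vanishes for $i\ge 2$. Alternatively, Lemma~\ref{lemma:deformation} gives the same conclusion through the homotopy equivalence with $(\mathsf{Star}_k)^2_{r,\mathbf{L}}$.

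\emph{Main obstacle.} There is no real difficulty here; the only care point is bookkeeping of variance. The $r$-direction is contravariant: larger $r$ gives a subgraph. So the correct domain is the product of the opposite order in $r$ with the usual order in $\mathbf{L}$, and morphisms $(r',\mathbf{L})\to(r,\mathbf{L}')$ for $r\le r'$, $\mathbf{L}\le\mathbf{L}'$ must be matched consistently with the definition of the multifiltration. Once this is aligned, the result is immediate.
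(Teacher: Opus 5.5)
Your proposal is correct and matches the paper's argument: $PH_i(\lVert(G_k)_{-,-}\rVert;\mathbb{F})$ is the composite of the functor $(G_k)_{-,-}$ from Theorem~\ref{thm:chap:3:8} with the functors $\lVert\cdot\rVert$ (Lemma~\ref{chap:2:lemma:2}) and $H_i(-;\mathbb{F})$, so it is a functor on the indexing poset. Your Step~2 check that each vector space is finite dimensional, because each realization is a finite $1$-dimensional complex, is a harmless addition that the paper leaves implicit when it takes the target to be $\cat{vect}_{\mathbb{F}}$.
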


To relate our graph model $(G_k)_{-,-}$ to the multifiltration $(\mathsf{Star}_k)^2_{-,-})$, we now fix $r>0$ and compare $\lVert(G_k)_{r,\mathbf{L}}\rVert$ and 
$(\mathsf{Star}_k)^2_{r,\mathbf{L}}$
as $\mathbf{L}$ varies. For all $r\in \mathbb{R}_{>0}$ and $\mathbf{L}\leq \mathbf{L}'\in (\mathbb{R}_{>0},\leq)^k$, we have the following diagram (in $\cat{Top}$):

\begin{equation}\label{eq:model:8}
\begin{tikzcd}
\lVert(G_k)_{r,\mathbf{L}}\rVert \arrow{r}{\lVert (G_k)_{r, \mathbf{L}\leq \mathbf{L}'}\rVert} \arrow[swap]{d}{\simeq} & \lVert(G_k)_{r,\mathbf{L}'}\rVert \arrow{d}{\simeq} \\%
(\mathsf{Star}_k)^2_{r,\mathbf{L}}\arrow[hookrightarrow]{r}{}& (\mathsf{Star}_k)^2_{r,\mathbf{L}'}
\end{tikzcd}
\end{equation}
where both vertical arrows are homotopy equivalences by Lemma~\ref{lemma:deformation} and the morphism $\lVert (G_k)_{r, \mathbf{L}\leq \mathbf{L}'}\rVert$ is the continuous map induced by the graph homomorphism $$(G_k)_{r, \mathbf{L}\leq \mathbf{L}'}: (G_k)_{r,\mathbf{L}}\hookrightarrow (G_k)_{r,\mathbf{L}'}$$

To show that Diagram~\ref{eq:model:8} is commutative up to homotopy, we first establish the isotopy equivalence between $\lVert(G_k)_{r,\mathbf{L}}\rVert$ and its image under $\lVert (G_k)_{r,\mathbf{L}\leq \mathbf{L}'}\rVert$, which sit inside the ambient configuration space $(\mathsf{Star}_k)^2_{r,\mathbf{L}'}$.

\begin{lemma}\label{lemma:2.4}
   $\lVert(G_k)_{r,\mathbf{L}}\rVert$ is isotopic\footnote{Let $Z$ be a topological space and let $X$ and $Y$ be subspaces of $Z$. The spaces $X$ and $Y$ are isotopic if there exists a continuous map $H: X\times [0,1]\rightarrow Z$ such that $H(x,0)=x$ for all $x\in X$, $H(X,1)=Y$, and $h_t(-):= H(-,t): X\rightarrow Z$ is a homeomorphism onto its image for all $t\in [0,1]$.} to the image of $\lVert (G_k)_{r, \mathbf{L}\leq \mathbf{L}'}\rVert$. Here, both $\lVert(G_k)_{r,\mathbf{L}}\rVert$ and $\Ima\lVert (G_k)_{r, \mathbf{L}\leq \mathbf{L}'}\rVert$ are subspaces of $(\mathsf{Star}_k)^2_{r,\mathbf{L}'}$.
\end{lemma}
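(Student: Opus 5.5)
The plan is to write down an explicit isotopy obtained by stretching each edge of the star linearly from length $L_i$ to length $L'_i$. First I fix the embeddings. I regard $(\mathsf{Star}_k)_{\mathbf{L}}$ as a subspace of $(\mathsf{Star}_k)_{\mathbf{L}'}$: the $i$-th edge of length $L_i$ is identified with the initial segment of length $L_i$ (measured from the center $0$) of the $i$-th edge of length $L'_i$. Because the star is a tree and every path between two points passes through these initial segments in the same way, the path metric of $(\mathsf{Star}_k)_{\mathbf{L}}$ is the restriction of that of $(\mathsf{Star}_k)_{\mathbf{L}'}$. Hence $(\mathsf{Star}_k)^2_{r,\mathbf{L}}\subseteq(\mathsf{Star}_k)^2_{r,\mathbf{L}'}$, and this is the inclusion in Diagram~(\ref{eq:model:8}).

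Next I recall from the construction behind Lemma~\ref{lemma:deformation} how $\lVert(G_k)_{r,\mathbf{L}}\rVert$ sits in $(\mathsf{Star}_k)^2_{r,\mathbf{L}}$. A vertex $xy$ is the configuration with the first robot at the vertex $x$ and the second at the vertex $y$ of $(\mathsf{Star}_k)_{\mathbf{L}}$. An edge $\{x_1y,x_2y\}$ with $x_1x_2=0$ is the path of length $L_{x_1}+L_{x_2}$ along which the first robot travels at unit speed from $x_1$ through the center to $x_2$ while the second stays at $y$; edges of the form $\{xy_1,xy_2\}$ are treated symmetrically. The image of $\lVert(G_k)_{r,\mathbf{L}\le\mathbf{L}'}\rVert$ is the same subcomplex built inside $(\mathsf{Star}_k)_{\mathbf{L}'}$: the same vertex pairs and edges, but with leaves now at distance $L'_x$ from the center. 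By the affine formula in Lemma~\ref{chap:2:lemma:2}, the map $\lVert(G_k)_{r,\mathbf{L}\le\mathbf{L}'}\rVert$ sends a point at parameter $t$ along an edge to the point at the same proportional parameter along the corresponding edge.

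Now define the isotopy. For $t\in[0,1]$ let $\varphi_t:(\mathsf{Star}_k)_{\mathbf{L}}\to(\mathsf{Star}_k)_{\mathbf{L}'}$ fix the center and stretch the $i$-th edge linearly by the factor
\[
\lambda_i(t)=\frac{(1-t)L_i+tL'_i}{L_i}\ \ge 1 .
\]
Set $H(p,t)=(\varphi_t\times\varphi_t)(p)$ for $p\in\lVert(G_k)_{r,\mathbf{L}}\rVert$.

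The following checks are needed.
\begin{enumerate}
\item $H$ is continuous and $H(-,0)$ is the identity.
\item Each $\varphi_t$ multiplies distances along every edge by at least $1$. The distance between two points of the star is a sum of distances along edges through the center, or a distance within a single edge. Hence $\delta(\varphi_t a,\varphi_t b)\ge\delta(a,b)\ge r$, so $H(-,t)$ lands in $(\mathsf{Star}_k)^2_{r,\mathbf{L}'}$.
\item Each $\varphi_t$ is injective, so $\varphi_t\times\varphi_t$ is injective. A continuous injection from the compact space $\lVert(G_k)_{r,\mathbf{L}}\rVert$ into a Hausdorff space is a homeomorphism onto its image.
\item At $t=1$ the map $\varphi_1$ sends each leaf $x$ at position $L_x$ to the leaf at position $L'_x$, and it is affine on each edge. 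So the image of every vertex and edge of $\lVert(G_k)_{r,\mathbf{L}}\rVert$ is exactly the corresponding vertex and edge of the subcomplex described above. Thus $H(-,1)$ has image $\Ima\lVert(G_k)_{r,\mathbf{L}\le\mathbf{L}'}\rVert$, and in fact $H(-,1)$ coincides with $\lVert(G_k)_{r,\mathbf{L}\le\mathbf{L}'}\rVert$ up to reparametrization of edges.
\end{enumerate}

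The main obstacle is the bookkeeping in the second step: describing precisely the embedding of $\lVert(G_k)_{r,\mathbf{L}}\rVert$ from the appendix's deformation retraction, and checking that the separation constraint $\delta\ge r$ survives the stretching. This includes the configurations where both robots lie on the same edge of the star, or where one of them sits at the center, which are the cases governed by the inequalities (\ref{pp3}). I would handle these by noting that stretching is a non-contracting map of the tree and that every graph point's configuration is carried along a straight segment in the product of edges.
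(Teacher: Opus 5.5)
Your argument is correct, and the isotopy you build is the same one the paper uses. What differs is how it is organized and verified.

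Use coordinates given by the two robots' distances from the center, so that $v_0=(L_i,0)$, $v_4=(L_i,L_j)$, $v_0'=(L_i',0)$, $v_4'=(L_i',L_j')$. The paper's interpolation for the case $r\le\min\{L_{e_i},L_{e_j}\}$ then reads
$G_1^{\sigma}(s,t)=\bigl((1-t)L_i+tL_i',\,s((1-t)L_j+tL_j')\bigr)$,
which is exactly $(\varphi_t\times\varphi_t)(L_i,sL_j)$.

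The paper works one 2-cell of $(\mathsf{Star}_k)^2_{r,\mathbf{L}'}$ at a time. It writes the interpolation and a cellwise inverse only in that case. The other intersection patterns of Figure~\ref{fig:six_cases} are obtained by restricting that construction ``assuming all the fins are collapsed.'' Gluing along shared faces, staying inside the configuration space, and global injectivity are left implicit.

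Your single global map $\varphi_t\times\varphi_t$ settles all of these uniformly:
\begin{itemize}
\item continuity and agreement on shared faces are automatic;
\item the constraint $\delta\ge r$ follows because $\varphi_t$ is non-contracting on the tree;
\item injectivity of $\varphi_t$, together with compactness, makes each $H(-,t)$ a homeomorphism onto its image;
\item no case distinction comparing $r$ with $L_i,L_j,L_i',L_j'$ is needed.
\end{itemize}

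For the same reason, the ``main obstacle'' you flag at the end is already resolved by your step 2. That step is a statement about all pairs of points of the star, so it does not depend on how the graph sits in the cells. In any case, no edge of $G_k$ has both robots in the interior of one star edge. Since $x_1x_2=0$, each graph edge is one robot traversing a single star edge while the other sits at a leaf; there is no travel ``through'' the center.

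Finally, in step 4 the equality of $H(-,1)$ with $\lVert(G_k)_{r,\mathbf{L}\le\mathbf{L}'}\rVert$ (composed with the inclusion into $(\mathsf{Star}_k)^2_{r,\mathbf{L}'}$) is exact, not merely up to reparametrization. Both maps are affine on each edge and agree on vertices. State this equality explicitly. Equality of images alone is weaker than what the paper needs next, namely that Diagram~(\ref{eq:model:8}) commutes up to homotopy; your $H$ provides that directly.
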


\begin{proof}
Let $\sigma$ be a top-dimension cell of $(\mathsf{Star}_k)^2_{r,\mathbf{L}'}$. When $r\leq \min\{L_{e_i}, L_{e_j}\}$ (where $i\neq j$), $\Ima\lVert (G_k)_{r, \mathbf{L}\leq \mathbf{L}'}\rVert = \lVert (G_k)_{r,\mathbf{L}'} \rVert$. Let $\Lambda v_0v_4v_3$ ( resp. $\Lambda v_0'v_4'v_3'$) denote the intersection of $\lVert (G_k)_{r,\mathbf{L}} \rVert$ (resp. $\Ima\lVert (G_k)_{r, \mathbf{L}\leq \mathbf{L}'}\rVert$) with $\sigma$. 
Define $f^{\sigma}: \Lambda v_0v_4v_3 \rightarrow \Lambda v_0'v_4'v_3'$ by
        $$f^{\sigma}(x)=\begin{cases} (1-s)v_0'+sv_4', & \mbox{if } x=(1-s)v_0+sv_4\\ (1-s)v_3'+sv_4', & \mbox{if }x=(1-s)v_3+sv_4\end{cases}$$
where $0\leq s\leq 1$. 
Define $g^{\sigma}: \Lambda v_0'v_4'v_3' \rightarrow \Lambda v_0v_4v_3$ by
        $$g^{\sigma}(y)=\begin{cases} (1-s)v_0+sv_4, & \mbox{if } y=(1-s)v_0'+sv_4'\\ (1-s)v_3+sv_4, & \mbox{if }y=(1-s)v_3'+sv_4'\end{cases}$$
where $0\leq s\leq 1$.

Note that $G_1^{\sigma}(1,t)=G_2^{\sigma}(1,t)$ for all $t\in [0,1]$, we obtain a well-defined continuous function 
        $$G^{\sigma}(x,t)=\begin{cases} G_1^{\sigma}(s,t), & \mbox{if } x= (1-s) v_0+ sv_4\in v_0v_4 \\ G_2^{\sigma}(s,t), & \mbox{if } x= (1-s) v_3+ sv_4\in v_3v_4  \end{cases}$$

    We construct a homotopy (denoted by $G^{\sigma}(x,t)$) for each top-dimension cell $\sigma$ of $(\mathsf{Star}_k)^2_{r,\mathbf{L}'}$ for the case $r\leq \min\{L_{e_i}, L_{e_j}\}$ such that the restriction of $G^{\sigma}(x,t)$ gives an isotopy of $\lVert(G_k)_{r,\mathbf{L}}\rVert$ and $\Ima\lVert (G_k)_{r, \mathbf{L}\leq \mathbf{L}'}\rVert$ when $r>\min\{L_{e_i}, L_{e_j}\}$, assuming all the fins of $(\mathsf{Star}_k)^2_{r,\mathbf{L}'}$ are collapsed.

\begin{figure}[htbp!]
     \begin{subfigure}[b]{0.3\textwidth}
         \centering    

\resizebox{!}{0.67\textwidth}{

\tikzset{every picture/.style={line width=0.75pt}} 

\begin{tikzpicture}[x=0.75pt,y=0.75pt,yscale=-1,xscale=1]

\draw  [draw opacity=0][fill={rgb, 255:red, 155; green, 155; blue, 155 }  ,fill opacity=0.4 ] (135,59) -- (135,144) -- (63,144) -- (37,111) -- (36,59) -- cycle ;
\draw [color={rgb, 255:red, 0; green, 0; blue, 0 }  ,draw opacity=1 ][line width=0.75]    (37,111) -- (63,144) ;
\draw [line width=0.75]    (37,18) -- (37,111) ;
\draw [line width=0.75]    (63,144) -- (161,144) ;
\draw  [draw opacity=0][fill={rgb, 255:red, 155; green, 155; blue, 155 }  ,fill opacity=0.4 ] (161,18) -- (161,144) -- (63,144) -- (37,111) -- (37,18) -- cycle ;
\draw [color={rgb, 255:red, 80; green, 227; blue, 194 }  ,draw opacity=1 ][line width=0.75]    (37,18) -- (161,18) ;
\draw [shift={(161,18)}, rotate = 0] [color={rgb, 255:red, 80; green, 227; blue, 194 }  ,draw opacity=1 ][fill={rgb, 255:red, 80; green, 227; blue, 194 }  ,fill opacity=1 ][line width=0.75]      (0, 0) circle [x radius= 3.35, y radius= 3.35]   ;
\draw [shift={(37,18)}, rotate = 0] [color={rgb, 255:red, 80; green, 227; blue, 194 }  ,draw opacity=1 ][fill={rgb, 255:red, 80; green, 227; blue, 194 }  ,fill opacity=1 ][line width=0.75]      (0, 0) circle [x radius= 3.35, y radius= 3.35]   ;
\draw [color={rgb, 255:red, 80; green, 227; blue, 194 }  ,draw opacity=1 ][line width=0.75]    (161,18) -- (161,144) ;
\draw [shift={(161,144)}, rotate = 90] [color={rgb, 255:red, 80; green, 227; blue, 194 }  ,draw opacity=1 ][fill={rgb, 255:red, 80; green, 227; blue, 194 }  ,fill opacity=1 ][line width=0.75]      (0, 0) circle [x radius= 3.35, y radius= 3.35]   ;
\draw [shift={(161,18)}, rotate = 90] [color={rgb, 255:red, 80; green, 227; blue, 194 }  ,draw opacity=1 ][fill={rgb, 255:red, 80; green, 227; blue, 194 }  ,fill opacity=1 ][line width=0.75]      (0, 0) circle [x radius= 3.35, y radius= 3.35]   ;
\draw [color={rgb, 255:red, 227; green, 85; blue, 80 }  ,draw opacity=1 ][line width=0.75]    (36,59) -- (135,59) ;
\draw [shift={(135,59)}, rotate = 0] [color={rgb, 255:red, 227; green, 85; blue, 80 }  ,draw opacity=1 ][fill={rgb, 255:red, 227; green, 85; blue, 80 }  ,fill opacity=1 ][line width=0.75]      (0, 0) circle [x radius= 3.35, y radius= 3.35]   ;
\draw [shift={(36,59)}, rotate = 0] [color={rgb, 255:red, 227; green, 85; blue, 80 }  ,draw opacity=1 ][fill={rgb, 255:red, 227; green, 85; blue, 80 }  ,fill opacity=1 ][line width=0.75]      (0, 0) circle [x radius= 3.35, y radius= 3.35]   ;
\draw [color={rgb, 255:red, 227; green, 85; blue, 80 }  ,draw opacity=1 ][line width=0.75]    (135,59) -- (135,144) ;
\draw [shift={(135,144)}, rotate = 90] [color={rgb, 255:red, 227; green, 85; blue, 80 }  ,draw opacity=1 ][fill={rgb, 255:red, 227; green, 85; blue, 80 }  ,fill opacity=1 ][line width=0.75]      (0, 0) circle [x radius= 3.35, y radius= 3.35]   ;
\draw [shift={(135,59)}, rotate = 90] [color={rgb, 255:red, 227; green, 85; blue, 80 }  ,draw opacity=1 ][fill={rgb, 255:red, 227; green, 85; blue, 80 }  ,fill opacity=1 ][line width=0.75]      (0, 0) circle [x radius= 3.35, y radius= 3.35]   ;

\draw (135,142.4) node [anchor=north west][inner sep=0.75pt]  [color={rgb, 255:red, 144; green, 19; blue, 254 }  ,opacity=1 ]  {$v_{0}$};
\draw (50.57,140.83) node [anchor=north west][inner sep=0.75pt]  [color={rgb, 255:red, 144; green, 19; blue, 254 }  ,opacity=1 ]  {$v_{1}$};
\draw (20,102.4) node [anchor=north west][inner sep=0.75pt]  [color={rgb, 255:red, 144; green, 19; blue, 254 }  ,opacity=1 ]  {$v_{2}$};
\draw (20,40.4) node [anchor=north west][inner sep=0.75pt]  [color={rgb, 255:red, 144; green, 19; blue, 254 }  ,opacity=1 ]  {$v_{3}$};
\draw (135,41.4) node [anchor=north west][inner sep=0.75pt]  [color={rgb, 255:red, 144; green, 19; blue, 254 }  ,opacity=1 ]  {$v_{4}$};
\draw (161,142.4) node [anchor=north west][inner sep=0.75pt]    {$v'_{0}$};
\draw (34.57,151.83) node [anchor=north west][inner sep=0.75pt]    {$v'_{1}$};
\draw (4,112.4) node [anchor=north west][inner sep=0.75pt]    {$v'_{2}$};
\draw (20,3.4) node [anchor=north west][inner sep=0.75pt]    {$v'_{3}$};
\draw (161,3.4) node [anchor=north west][inner sep=0.75pt]    {$v'_{4}$};

\end{tikzpicture}
}
  \caption{$r\leq\min\{L_{e_i},L_{e_j}\}$}
         \label{fig: small_r}
     \end{subfigure}%
     \hfill 
 \begin{subfigure}[b]{0.3\textwidth}
         \centering    
\resizebox{!}{0.67\textwidth}{

\tikzset{every picture/.style={line width=0.75pt}} 

\begin{tikzpicture}[x=0.75pt,y=0.75pt,yscale=-1,xscale=1]

\draw    (55,34) -- (55,74) ;
\draw  [draw opacity=0][fill={rgb, 255:red, 155; green, 155; blue, 155 }  ,fill opacity=0.4 ] (154,74) -- (154,159) -- (123.14,159) -- (55,74) -- cycle ;
\draw [color={rgb, 255:red, 0; green, 0; blue, 0 }  ,draw opacity=1 ][line width=0.75]    (55,74) -- (123.14,159) ;
\draw [color={rgb, 255:red, 0; green, 0; blue, 0 }  ,draw opacity=1 ][line width=0.75]    (55,74) -- (154,74) ;
\draw [line width=0.75]    (123.14,159) -- (154,159) ;
\draw  [draw opacity=0][fill={rgb, 255:red, 155; green, 155; blue, 155 }  ,fill opacity=0.4 ] (181,33) -- (181,159) -- (123.14,158) -- (55,73) -- (55,33) -- cycle ;
\draw [color={rgb, 255:red, 80; green, 227; blue, 194 }  ,draw opacity=1 ][line width=0.75]    (55,34) -- (181,34) ;
\draw [shift={(181,34)}, rotate = 0] [color={rgb, 255:red, 80; green, 227; blue, 194 }  ,draw opacity=1 ][fill={rgb, 255:red, 80; green, 227; blue, 194 }  ,fill opacity=1 ][line width=0.75]      (0, 0) circle [x radius= 3.35, y radius= 3.35]   ;
\draw [shift={(55,34)}, rotate = 0] [color={rgb, 255:red, 80; green, 227; blue, 194 }  ,draw opacity=1 ][fill={rgb, 255:red, 80; green, 227; blue, 194 }  ,fill opacity=1 ][line width=0.75]      (0, 0) circle [x radius= 3.35, y radius= 3.35]   ;
\draw    (154,159) -- (181,159) ;
\draw [color={rgb, 255:red, 80; green, 227; blue, 194 }  ,draw opacity=1 ][line width=0.75]    (181,34) -- (181,160) ;
\draw [shift={(181,160)}, rotate = 90] [color={rgb, 255:red, 80; green, 227; blue, 194 }  ,draw opacity=1 ][fill={rgb, 255:red, 80; green, 227; blue, 194 }  ,fill opacity=1 ][line width=0.75]      (0, 0) circle [x radius= 3.35, y radius= 3.35]   ;
\draw [color={rgb, 255:red, 208; green, 2; blue, 27 }  ,draw opacity=1 ][line width=0.75]    (154,74) -- (154,159) ;
\draw [shift={(154,159)}, rotate = 90] [color={rgb, 255:red, 208; green, 2; blue, 27 }  ,draw opacity=1 ][fill={rgb, 255:red, 208; green, 2; blue, 27 }  ,fill opacity=1 ][line width=0.75]      (0, 0) circle [x radius= 3.35, y radius= 3.35]   ;
\draw [shift={(154,74)}, rotate = 90] [color={rgb, 255:red, 208; green, 2; blue, 27 }  ,draw opacity=1 ][fill={rgb, 255:red, 208; green, 2; blue, 27 }  ,fill opacity=1 ][line width=0.75]      (0, 0) circle [x radius= 3.35, y radius= 3.35]   ;

\draw (153,158.4) node [anchor=north west][inner sep=0.75pt]  [color={rgb, 255:red, 144; green, 19; blue, 254 }  ,opacity=1 ]  {$v_{0}$};
\draw (152,56.4) node [anchor=north west][inner sep=0.75pt]  [color={rgb, 255:red, 144; green, 19; blue, 254 }  ,opacity=1 ]  {$v_{4}$};
\draw (184,157.4) node [anchor=north west][inner sep=0.75pt]    {$v'_{0}$};
\draw (34,18.4) node [anchor=north west][inner sep=0.75pt]    {$v'_{3}$};
\draw (185,19.4) node [anchor=north west][inner sep=0.75pt]    {$v'_{4}$};

\end{tikzpicture}
}
  \caption{$\min\{L_{e_i},L_{e_j}\}<r\leq\max\{L_{e_i},L_{e_j}\}$ and $r\leq\min\{L_{e'_i},L_{e'_j}\}$}
         \label{fig: small_r}
     \end{subfigure}%
     \hfill 
 \begin{subfigure}[b]{0.3\textwidth}
         \centering    
\resizebox{!}{0.67\textwidth}{

\tikzset{every picture/.style={line width=0.75pt}} 

\begin{tikzpicture}[x=0.75pt,y=0.75pt,yscale=-1,xscale=1]

\draw    (154,74) -- (154,159) ;
\draw    (55,34) -- (55,74) ;
\draw  [draw opacity=0][fill={rgb, 255:red, 155; green, 155; blue, 155 }  ,fill opacity=0.4 ] (154,74) -- (154,159) -- (154,159) -- (55,74) -- cycle ;
\draw [color={rgb, 255:red, 0; green, 0; blue, 0 }  ,draw opacity=1 ][line width=0.75]    (55,74) -- (154,159) ;
\draw [color={rgb, 255:red, 0; green, 0; blue, 0 }  ,draw opacity=1 ][line width=0.75]    (55,74) -- (154,74) ;
\draw [line width=0.75]    (154,159) ;
\draw  [draw opacity=0][fill={rgb, 255:red, 155; green, 155; blue, 155 }  ,fill opacity=0.4 ] (181,33) -- (181,159) -- (154,159) -- (55,73) -- (55,33) -- cycle ;
\draw [color={rgb, 255:red, 80; green, 227; blue, 194 }  ,draw opacity=1 ][line width=0.75]    (55,34) -- (181,34) ;
\draw [shift={(181,34)}, rotate = 0] [color={rgb, 255:red, 80; green, 227; blue, 194 }  ,draw opacity=1 ][fill={rgb, 255:red, 80; green, 227; blue, 194 }  ,fill opacity=1 ][line width=0.75]      (0, 0) circle [x radius= 3.35, y radius= 3.35]   ;
\draw [shift={(55,34)}, rotate = 0] [color={rgb, 255:red, 80; green, 227; blue, 194 }  ,draw opacity=1 ][fill={rgb, 255:red, 80; green, 227; blue, 194 }  ,fill opacity=1 ][line width=0.75]      (0, 0) circle [x radius= 3.35, y radius= 3.35]   ;
\draw    (154,159) -- (181,159) ;
\draw [color={rgb, 255:red, 80; green, 227; blue, 194 }  ,draw opacity=1 ][line width=0.75]    (181,34) -- (181,160) ;
\draw [shift={(181,160)}, rotate = 90] [color={rgb, 255:red, 80; green, 227; blue, 194 }  ,draw opacity=1 ][fill={rgb, 255:red, 80; green, 227; blue, 194 }  ,fill opacity=1 ][line width=0.75]      (0, 0) circle [x radius= 3.35, y radius= 3.35]   ;
\draw [color={rgb, 255:red, 208; green, 2; blue, 27 }  ,draw opacity=1 ][line width=0.75]    (154,74) ;
\draw [shift={(154,74)}, rotate = 0] [color={rgb, 255:red, 208; green, 2; blue, 27 }  ,draw opacity=1 ][fill={rgb, 255:red, 208; green, 2; blue, 27 }  ,fill opacity=1 ][line width=0.75]      (0, 0) circle [x radius= 3.35, y radius= 3.35]   ;
\draw [shift={(154,74)}, rotate = 0] [color={rgb, 255:red, 208; green, 2; blue, 27 }  ,draw opacity=1 ][fill={rgb, 255:red, 208; green, 2; blue, 27 }  ,fill opacity=1 ][line width=0.75]      (0, 0) circle [x radius= 3.35, y radius= 3.35]   ;

\draw (156,56.4) node [anchor=north west][inner sep=0.75pt]  [color={rgb, 255:red, 144; green, 19; blue, 254 }  ,opacity=1 ]  {$v_{4}$};
\draw (186,159.4) node [anchor=north west][inner sep=0.75pt]    {$v'_{0}$};
\draw (34,18.4) node [anchor=north west][inner sep=0.75pt]    {$v'_{3}$};
\draw (185,19.4) node [anchor=north west][inner sep=0.75pt]    {$v'_{4}$};

\end{tikzpicture}
}
  \caption{$\max\{L_{e_i},L_{e_j}\}<r\leq L_{e_i}+L_{e_j}$ and $r\leq\min\{L_{e'_i},L_{e'_j}\}$}
         \label{fig: small_r}
     \end{subfigure}
     \hfill

 \begin{subfigure}[b]{0.3\textwidth}
         \centering    
\resizebox{!}{0.67\textwidth}{

\tikzset{every picture/.style={line width=0.75pt}} 

\begin{tikzpicture}[x=0.75pt,y=0.75pt,yscale=-1,xscale=1]

\draw  [draw opacity=0][fill={rgb, 255:red, 155; green, 155; blue, 155 }  ,fill opacity=0.4 ] (154,74) -- (154,159) -- (123.14,159) -- (76,74) -- cycle ;
\draw [color={rgb, 255:red, 0; green, 0; blue, 0 }  ,draw opacity=1 ][line width=0.75]    (76,74) -- (155,74) ;
\draw [line width=0.75]    (123.14,159) -- (154,159) ;
\draw  [draw opacity=0][fill={rgb, 255:red, 155; green, 155; blue, 155 }  ,fill opacity=0.4 ] (181,34) -- (181,160) -- (123.14,159) -- (55,35) -- (55,34) -- cycle ;
\draw [color={rgb, 255:red, 0; green, 0; blue, 0 }  ,draw opacity=1 ][line width=0.75]    (55,34) -- (181,34) ;
\draw    (55,34) -- (123.14,159) ;
\draw    (154,159) -- (181,159) ;
\draw [color={rgb, 255:red, 80; green, 227; blue, 194 }  ,draw opacity=1 ][line width=0.75]    (181,34) -- (181,160) ;
\draw [shift={(181,160)}, rotate = 90] [color={rgb, 255:red, 80; green, 227; blue, 194 }  ,draw opacity=1 ][fill={rgb, 255:red, 80; green, 227; blue, 194 }  ,fill opacity=1 ][line width=0.75]      (0, 0) circle [x radius= 3.35, y radius= 3.35]   ;
\draw [shift={(181,34)}, rotate = 90] [color={rgb, 255:red, 80; green, 227; blue, 194 }  ,draw opacity=1 ][fill={rgb, 255:red, 80; green, 227; blue, 194 }  ,fill opacity=1 ][line width=0.75]      (0, 0) circle [x radius= 3.35, y radius= 3.35]   ;
\draw [color={rgb, 255:red, 208; green, 2; blue, 27 }  ,draw opacity=1 ][line width=0.75]    (154,74) -- (154,159) ;
\draw [shift={(154,159)}, rotate = 90] [color={rgb, 255:red, 208; green, 2; blue, 27 }  ,draw opacity=1 ][fill={rgb, 255:red, 208; green, 2; blue, 27 }  ,fill opacity=1 ][line width=0.75]      (0, 0) circle [x radius= 3.35, y radius= 3.35]   ;
\draw [shift={(154,74)}, rotate = 90] [color={rgb, 255:red, 208; green, 2; blue, 27 }  ,draw opacity=1 ][fill={rgb, 255:red, 208; green, 2; blue, 27 }  ,fill opacity=1 ][line width=0.75]      (0, 0) circle [x radius= 3.35, y radius= 3.35]   ;

\draw (158,158.4) node [anchor=north west][inner sep=0.75pt]  [color={rgb, 255:red, 144; green, 19; blue, 254 }  ,opacity=1 ]  {$v_{0}$};
\draw (158,60.4) node [anchor=north west][inner sep=0.75pt]  [color={rgb, 255:red, 144; green, 19; blue, 254 }  ,opacity=1 ]  {$v_{4}$};
\draw (186,157.4) node [anchor=north west][inner sep=0.75pt]    {$v'_{0}$};
\draw (185,19.4) node [anchor=north west][inner sep=0.75pt]    {$v'_{4}$};

\end{tikzpicture}
}
  \caption{$\min\{L_{e_i},L_{e_j}\}<r\leq\max\{L_{e_i},L_{e_j}\}$ and $\min\{L_{e'_i},L_{e'_j}\}<r\leq\max\{L_{e'_i},L_{e'_j}\}$}
         \label{fig: small_r}
     \end{subfigure}
     \hfill 
 \begin{subfigure}[b]{0.3\textwidth}
         \centering    
\resizebox{!}{0.67\textwidth}{

\tikzset{every picture/.style={line width=0.75pt}} 

\begin{tikzpicture}[x=0.75pt,y=0.75pt,yscale=-1,xscale=1]

\draw    (156,74) -- (156,159) ;
\draw  [draw opacity=0][fill={rgb, 255:red, 155; green, 155; blue, 155 }  ,fill opacity=0.4 ] (155,74) -- (155,159) -- (155,159) -- (87,74) -- cycle ;
\draw [color={rgb, 255:red, 0; green, 0; blue, 0 }  ,draw opacity=1 ][line width=0.75]    (87,74) -- (156,74) ;
\draw [color={rgb, 255:red, 208; green, 2; blue, 27 }  ,draw opacity=1 ][line width=0.75]    (156,74) ;
\draw [shift={(156,74)}, rotate = 0] [color={rgb, 255:red, 208; green, 2; blue, 27 }  ,draw opacity=1 ][fill={rgb, 255:red, 208; green, 2; blue, 27 }  ,fill opacity=1 ][line width=0.75]      (0, 0) circle [x radius= 3.35, y radius= 3.35]   ;
\draw  [draw opacity=0][fill={rgb, 255:red, 155; green, 155; blue, 155 }  ,fill opacity=0.4 ] (182,33) -- (182,159) -- (155,158) -- (56,34) -- (56,33) -- cycle ;
\draw [color={rgb, 255:red, 0; green, 0; blue, 0 }  ,draw opacity=1 ][line width=0.75]    (56,34) -- (182,34) ;
\draw    (56,34) -- (155,159) ;
\draw    (155,159) -- (182,159) ;
\draw [color={rgb, 255:red, 80; green, 227; blue, 194 }  ,draw opacity=1 ][line width=0.75]    (182,34) -- (182,160) ;
\draw [shift={(182,160)}, rotate = 90] [color={rgb, 255:red, 80; green, 227; blue, 194 }  ,draw opacity=1 ][fill={rgb, 255:red, 80; green, 227; blue, 194 }  ,fill opacity=1 ][line width=0.75]      (0, 0) circle [x radius= 3.35, y radius= 3.35]   ;
\draw [shift={(182,34)}, rotate = 90] [color={rgb, 255:red, 80; green, 227; blue, 194 }  ,draw opacity=1 ][fill={rgb, 255:red, 80; green, 227; blue, 194 }  ,fill opacity=1 ][line width=0.75]      (0, 0) circle [x radius= 3.35, y radius= 3.35]   ;

\draw (161,62.4) node [anchor=north west][inner sep=0.75pt]  [color={rgb, 255:red, 144; green, 19; blue, 254 }  ,opacity=1 ]  {$v_{4}$};
\draw (185,155.4) node [anchor=north west][inner sep=0.75pt]    {$v'_{0}$};
\draw (184,19.4) node [anchor=north west][inner sep=0.75pt]    {$v'_{4}$};

\end{tikzpicture}
}
  \caption{$\max\{L_{e_i},L_{e_j}\}<r\leq L_{e_i}+L_{e_j}$ and $\min\{L_{e'_i},L_{e'_j}\}<r\leq\max\{L_{e'_i},L_{e'_j}\}$}
         \label{fig: small_r}
     \end{subfigure}%
     \hfill 
 \begin{subfigure}[b]{0.3\textwidth}
         \centering    
\resizebox{!}{0.67\textwidth}{

\tikzset{every picture/.style={line width=0.75pt}} 

\begin{tikzpicture}[x=0.75pt,y=0.75pt,yscale=-1,xscale=1]

\draw  [draw opacity=0][fill={rgb, 255:red, 155; green, 155; blue, 155 }  ,fill opacity=0.4 ] (155,74) -- (155,159) -- (155,125) -- (99,74) -- cycle ;
\draw [color={rgb, 255:red, 0; green, 0; blue, 0 }  ,draw opacity=1 ][line width=0.75]    (99,74) -- (156,74) ;
\draw [color={rgb, 255:red, 0; green, 0; blue, 0 }  ,draw opacity=1 ][line width=0.75]    (155,74) -- (155,125) ;
\draw  [draw opacity=0][fill={rgb, 255:red, 155; green, 155; blue, 155 }  ,fill opacity=0.4 ] (182,34) -- (181,148) -- (181,148) -- (56,35) -- (56,34) -- cycle ;
\draw [color={rgb, 255:red, 0; green, 0; blue, 0 }  ,draw opacity=1 ][line width=0.75]    (56,34) -- (182,34) ;
\draw [color={rgb, 255:red, 0; green, 0; blue, 0 }  ,draw opacity=1 ][line width=0.75]    (182,34) -- (182,148) ;
\draw    (56,34) -- (181,148) ;
\draw [color={rgb, 255:red, 80; green, 227; blue, 194 }  ,draw opacity=1 ][line width=0.75]    (181,34) ;
\draw [shift={(181,34)}, rotate = 0] [color={rgb, 255:red, 80; green, 227; blue, 194 }  ,draw opacity=1 ][fill={rgb, 255:red, 80; green, 227; blue, 194 }  ,fill opacity=1 ][line width=0.75]      (0, 0) circle [x radius= 3.35, y radius= 3.35]   ;
\draw [color={rgb, 255:red, 208; green, 2; blue, 27 }  ,draw opacity=1 ][line width=0.75]    (155,74) ;
\draw [shift={(155,74)}, rotate = 0] [color={rgb, 255:red, 208; green, 2; blue, 27 }  ,draw opacity=1 ][fill={rgb, 255:red, 208; green, 2; blue, 27 }  ,fill opacity=1 ][line width=0.75]      (0, 0) circle [x radius= 3.35, y radius= 3.35]   ;

\draw (159,67.4) node [anchor=north west][inner sep=0.75pt]  [color={rgb, 255:red, 144; green, 19; blue, 254 }  ,opacity=1 ]  {$v_{4}$};
\draw (185,19.4) node [anchor=north west][inner sep=0.75pt]    {$v'_{4}$};

\end{tikzpicture}
}
  \caption{$\max\{L_{e'_i},L_{e'_j}\}<r\leq L_{e_i}+L_{e_j}$}
         \label{fig: small_r}
     \end{subfigure}%
     \hfill 
\caption{Possible intersections of $\lVert(G_k)_{r,\mathbf{L}}\rVert$ and $\Ima\lVert (G_k)_{r, \mathbf{L}\leq \mathbf{L}'}\rVert$ with $\sigma$ }
        \label{fig:six_cases}
\end{figure}

When $r\leq \min\{L_{e_i}, L_{e_j}\}$ (where $i\neq j$), $\Ima\lVert (G_k)_{r, \mathbf{L}\leq \mathbf{L}'}\rVert = \lVert (G_k)_{r,\mathbf{L}'} \rVert$. Let $\Lambda v_0v_4v_3$ ( resp. $\Lambda v_0'v_4'v_3'$) denote the intersection of $\lVert (G_k)_{r,\mathbf{L}} \rVert$ (resp. $\Ima\lVert (G_k)_{r, \mathbf{L}\leq \mathbf{L}'}\rVert$) with $\sigma$. 



Define 
        $$G_1^{\sigma}(s,t)=(1-s)((1-t)v_0+tv_0')+s((1-t)v_4+tv_4')$$
        $$G_2^{\sigma}(s,t)=(1-s)((1-t)v_3+tv_3')+s((1-t)v_4+tv_4')$$
where $0\leq s\leq 1$ and $0\leq t\leq 1$. Note that $G_1^{\sigma}(1,t)=G_2^{\sigma}(1,t)$ for all $t\in [0,1]$, we obtain a well-defined continuous function 
        $$G^{\sigma}(x,t)=\begin{cases} G_1^{\sigma}(s,t), & \mbox{if } x= (1-s) v_0+ sv_4\in v_0v_4 \\ G_2^{\sigma}(s,t), & \mbox{if } x= (1-s) v_3+ sv_4\in v_3v_4  \end{cases}$$
        
When $t$ is fixed, note that the inverse of $G^{\sigma}(x,t)$ is $$(G^{\sigma})^{-1}(y,t)=\begin{cases} (G_1^{\sigma})^{-1}(s,t), & \mbox{if } y= (1-s)((1-t)v_0+tv_0')+s((1-t)v_4+tv_4') \\ (G_2^{\sigma})^{-1}(s,t), & \mbox{if } y= (1-s)((1-t)v_3+tv_3')+s((1-t)v_4+tv_4')  \end{cases}$$

where $(G_1^{\sigma})^{-1}(s,t)=(1-s)v_0 + s v_4$ and $(G_2^{\sigma})^{-1}(s,t)=(1-s)v_3 + s v_4$. Hence when $t$ is fixed, $G^{\sigma}(x,t)$ is a homeomorphism onto its image.
\end{proof}

Lemma~\ref{lemma:2.4} implies that the diagram~\ref{eq:model:8} is commutative up to homotopy. Hence the following diagram is commutative for all $\mathbf{L}\leq \mathbf{L}'$ and $r\in \mathbb{R}_{>0}$:

\begin{equation}\label{eq:model:9}
\begin{tikzcd}
H_i(\lVert(G_k)_{r,\mathbf{L}}\rVert;\mathbb{F}) \arrow{r}{} \arrow[swap]{d}{\cong} & H_i(\lVert(G_k)_{r,\mathbf{L}'}\rVert;\mathbb{F}) \arrow{d}{\cong} \\%
H_i((\mathsf{Star}_k)^2_{r,\mathbf{L}};\mathbb{F}) \arrow{r}{}& H_i((\mathsf{Star}_k)^2_{r,\mathbf{L}'};\mathbb{F})
\end{tikzcd}
\end{equation}

The discussion above proves the next lemma.
\begin{lemma}\label{thm:2.6}
   For all $r\in\mathbb{R}_{>0}$, $PH_i(\lVert(G_k)_{r,-}\rVert;\mathbb{F})\cong PH_i((\mathsf{Star}_k)^2_{r,-};\mathbb{F})$ for all $i\in\mathbb{N}$.
\end{lemma}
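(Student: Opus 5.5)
The natural isomorphism will be built componentwise from the homotopy equivalences of Lemma~\ref{lemma:deformation}. Fix $r>0$ and let $\mathbf{L}$ vary over $(\mathbb{R}_{>0},\leq)^k$. For each $\mathbf{L}$, write $\iota_{\mathbf{L}}:\lVert(G_k)_{r,\mathbf{L}}\rVert\hookrightarrow(\mathsf{Star}_k)^2_{r,\mathbf{L}}$ for the inclusion of the strong deformation retract. The component at $\mathbf{L}$ will be
$$\eta_{\mathbf{L}}:=H_i(\iota_{\mathbf{L}};\mathbb{F}):H_i(\lVert(G_k)_{r,\mathbf{L}}\rVert;\mathbb{F})\to H_i((\mathsf{Star}_k)^2_{r,\mathbf{L}};\mathbb{F}),$$
which is an isomorphism because $\iota_{\mathbf{L}}$ is a homotopy equivalence. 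It then remains only to check naturality: for every $\mathbf{L}\leq\mathbf{L}'$, the square (\ref{eq:model:9}) must commute. That amounts to showing that Diagram~(\ref{eq:model:8}) commutes up to homotopy, i.e. that
$$j\circ\iota_{\mathbf{L}}\simeq\iota_{\mathbf{L}'}\circ\lVert(G_k)_{r,\mathbf{L}\leq\mathbf{L}'}\rVert,$$
where $j:(\mathsf{Star}_k)^2_{r,\mathbf{L}}\hookrightarrow(\mathsf{Star}_k)^2_{r,\mathbf{L}'}$ is the inclusion.

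The key step is to realize both composites as maps into the common target $(\mathsf{Star}_k)^2_{r,\mathbf{L}'}$ and connect them by the isotopy of Lemma~\ref{lemma:2.4}. The map $j\circ\iota_{\mathbf{L}}$ is the inclusion of $\lVert(G_k)_{r,\mathbf{L}}\rVert$ as a subspace of $(\mathsf{Star}_k)^2_{r,\mathbf{L}'}$. The map $\iota_{\mathbf{L}'}\circ\lVert(G_k)_{r,\mathbf{L}\leq\mathbf{L}'}\rVert$ sends $\lVert(G_k)_{r,\mathbf{L}}\rVert$ homeomorphically onto $\Ima\lVert(G_k)_{r,\mathbf{L}\leq\mathbf{L}'}\rVert$. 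Lemma~\ref{lemma:2.4} provides $H:\lVert(G_k)_{r,\mathbf{L}}\rVert\times[0,1]\to(\mathsf{Star}_k)^2_{r,\mathbf{L}'}$ with $H(-,0)$ the inclusion and $H(-,1)$ landing on that image. Cell by cell, the isotopy is built by linear interpolation $G^\sigma$, sending $v_m$ to $v_m'$ and extending affinely along edges. I will check that $H(-,1)$ agrees with $\lVert(G_k)_{r,\mathbf{L}\leq\mathbf{L}'}\rVert$ as a map, not merely as an image: both are affine on each edge and send each vertex $xy$ to the same vertex $xy$ of the $\mathbf{L}'$-graph. Once this holds, $H$ is a homotopy between the two composites. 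Applying $H_i(-;\mathbb{F})$ then gives (\ref{eq:model:9}).

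Functoriality completes the argument. Compatibility with identities is trivial. Compatibility with composites $\mathbf{L}\leq\mathbf{L}'\leq\mathbf{L}''$ is automatic, since both persistence modules are functors and we have only checked that each generating square commutes. Hence $\{\eta_{\mathbf{L}}\}$ is a natural isomorphism $PH_i(\lVert(G_k)_{r,-}\rVert;\mathbb{F})\cong PH_i((\mathsf{Star}_k)^2_{r,-};\mathbb{F})$.

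I expect the main obstacle to be making the homotopy $H$ globally well defined and continuous, and keeping it inside $(\mathsf{Star}_k)^2_{r,\mathbf{L}'}$ in the degenerate regimes of Figure~\ref{fig:six_cases}. These are the regimes where $r$ exceeds $\min$ or $\max$ of the relevant lengths, so some edges of the $\mathbf{L}$-graph are absent or fins have collapsed. My first approach would be to glue the cellwise $G^\sigma$ along shared faces, using that the interpolation is determined by vertex positions, which agree on common faces. Convexity of each top cell, as a parametric polytope defined by (\ref{pp1})--(\ref{pp2}), then keeps the straight-line interpolation inside the cell. The inequalities defining the cell have right-hand sides monotone in $\mathbf{L}$, so the $\mathbf{L}$-cell sits inside the $\mathbf{L}'$-cell; this ensures the segment from $v_m$ to $v_m'$ stays in the $\mathbf{L}'$ configuration space.
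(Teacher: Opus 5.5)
Your proposal is correct and follows the paper's route. The components $H_i(\iota_{\mathbf{L}};\mathbb{F})$ are isomorphisms by Lemma~\ref{lemma:deformation}, and naturality comes from showing that Diagram~(\ref{eq:model:8}) commutes up to homotopy via the isotopy of Lemma~\ref{lemma:2.4}, then applying $H_i(-;\mathbb{F})$ to obtain Diagram~(\ref{eq:model:9}); your added checks, that the time-$1$ map equals $\iota_{\mathbf{L}'}\circ\lVert(G_k)_{r,\mathbf{L}\leq\mathbf{L}'}\rVert$ as a map and that convexity of the cells keeps the straight-line interpolation inside $(\mathsf{Star}_k)^2_{r,\mathbf{L}'}$ in every regime, supply details the paper leaves implicit.
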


Now we prove the main theorem of this section.
\begin{theorem}\label{thm:2.8}
Let $k\geq 3$. 
$PH_i((\mathsf{Star}_k)^2_{-,-};\mathbb{F})\cong PH_i(\lVert(G_k)_{-,-}\rVert;\mathbb{F})$ for all $i\in\mathbb{N}$. 
\end{theorem}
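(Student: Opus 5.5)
To prove Theorem~\ref{thm:2.8}, I will exhibit a natural isomorphism of functors $(\mathbb{R}_{>0},\leq)\op\times(\mathbb{R}_{>0},\leq)^k\to\cat{vect}_{\mathbb{F}}$. At each index $(r,\mathbf{L})$ the component is the isomorphism
$$\phi_{r,\mathbf{L}}:=H_i(\iota_{r,\mathbf{L}};\mathbb{F}):H_i(\lVert(G_k)_{r,\mathbf{L}}\rVert;\mathbb{F})\to H_i((\mathsf{Star}_k)^2_{r,\mathbf{L}};\mathbb{F}),$$
where $\iota_{r,\mathbf{L}}$ is the inclusion of the strong deformation retract from Lemma~\ref{lemma:deformation}. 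Using the inclusion itself, rather than an arbitrary homotopy equivalence, is what makes the components canonical. It is also what allows the squares already established to be reused.

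The main step is naturality. A general morphism $(r',\mathbf{L})\to(r,\mathbf{L}')$ with $r\leq r'$ and $\mathbf{L}\leq\mathbf{L}'$ is, by the definition of $(G_k)_{-,-}$, the composite
$$(G_k)_{r\leq r',\mathbf{L}\leq\mathbf{L}'}=(G_k)_{r\leq r',\mathbf{L}'}\circ(G_k)_{r',\mathbf{L}\leq\mathbf{L}'}.$$
The corresponding structure map of $(\mathsf{Star}_k)^2_{-,-}$ is the inclusion, which factors in the same way:
$$(\mathsf{Star}_k)^2_{r',\mathbf{L}}\hookrightarrow(\mathsf{Star}_k)^2_{r',\mathbf{L}'}\hookrightarrow(\mathsf{Star}_k)^2_{r,\mathbf{L}'}.$$
Both persistence modules are obtained by applying $H_i$, so their structure maps factor correspondingly. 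Naturality for the composite therefore reduces to pasting two commuting squares.
\begin{itemize}
\item The $r$-direction square (\ref{eq:model:7}) at fixed $\mathbf{L}'$ comes from Theorem~\ref{thm:2.1}. It commutes strictly on the space level, because $\lVert(G_k)_{r',\mathbf{L}'}\rVert\subseteq\lVert(G_k)_{r,\mathbf{L}'}\rVert$ are nested subspaces of $(\mathsf{Star}_k)^2_{r,\mathbf{L}'}$.
\item The $\mathbf{L}$-direction square (\ref{eq:model:9}) at fixed $r'$ comes from Lemma~\ref{thm:2.6}.
\end{itemize}
Pasting them gives a commuting rectangle. This shows that $\phi$ commutes with every structure map. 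Functoriality of $H_i$ and Theorem~\ref{thm:chap:3:8} handle identities and composition automatically.

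Finally, a natural transformation whose components are all isomorphisms is a natural isomorphism, since the inverses automatically form a natural transformation. This yields $PH_i((\mathsf{Star}_k)^2_{-,-};\mathbb{F})\cong PH_i(\lVert(G_k)_{-,-}\rVert;\mathbb{F})$ for every $i$.

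The main obstacle is a consistency check: the vertical isomorphisms used in Lemma~\ref{thm:2.6} must be the same maps $\phi_{r,\mathbf{L}}$ as those in Theorem~\ref{thm:2.1}. Concretely, this requires the square (\ref{eq:model:8}) to commute up to homotopy when its vertical maps are exactly the inclusions $\iota$. I would verify this with the isotopy $G$ of Lemma~\ref{lemma:2.4}. It is a homotopy inside $(\mathsf{Star}_k)^2_{r,\mathbf{L}'}$ between two maps:
\begin{itemize}
\item the composite $\lVert(G_k)_{r,\mathbf{L}}\rVert\xrightarrow{\iota}(\mathsf{Star}_k)^2_{r,\mathbf{L}}\hookrightarrow(\mathsf{Star}_k)^2_{r,\mathbf{L}'}$, which is $G(-,0)$;
\item the composite $\iota_{r,\mathbf{L}'}\circ\lVert(G_k)_{r,\mathbf{L}\leq\mathbf{L}'}\rVert$, which is $G(-,1)$ because $G$ sends each edge linearly onto its image edge.
\end{itemize}
Once this is confirmed, the pasting argument above goes through without further work.
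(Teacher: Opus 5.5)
Your proposal is correct and is essentially the paper's argument. The paper assembles the $r$-direction squares (Theorem~\ref{thm:2.1}) and the $\mathbf{L}$-direction squares (Lemma~\ref{thm:2.6}) into a commuting cube with isomorphisms as vertical maps, which amounts to your pasting of two squares along $(G_k)_{r\leq r',\mathbf{L}\leq\mathbf{L}'}=(G_k)_{r\leq r',\mathbf{L}'}\circ(G_k)_{r',\mathbf{L}\leq\mathbf{L}'}$; your explicit check that both families of squares use the same vertical maps $\iota_{r,\mathbf{L}}$, with the $\mathbf{L}$-square commuting up to the homotopy of Lemma~\ref{lemma:2.4}, makes precise a point the paper leaves implicit.
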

\begin{proof}
It suffices to show the following cube diagram commutes:

\[\begin{tikzcd}[row sep={40,between origins}, column sep={55,between origins}]
      & H_i(\lVert (G_k)_{r',\mathbf{L}}\rVert;\mathbb{F})
      \ar{rr}\ar{dd}\ar{dl} & & H_i(\lVert(G_k)_{r',\mathbf{L}'}\rVert;\mathbb{F})  \ar{dd}\ar{dl} \\
    H_i(\lVert(G_k)_{r,\mathbf{L}}\rVert;\mathbb{F}) \ar[crossing over]{rr} \ar{dd} & & H_i(\lVert (G_k)_{r,\mathbf{L}'}\rVert;\mathbb{F}) \\
      & H_i((\mathsf{Star}_k)^2_{r',\mathbf{L}};\mathbb{F})  \ar{rr} \ar{dl} & &  H_i((\mathsf{Star}_k)^2_{r',\mathbf{L}'};\mathbb{F})  \ar{dl} \\
      H_i((\mathsf{Star}_k)^2_{r,\mathbf{L}};\mathbb{F})\ar{rr} && H_i((\mathsf{Star}_k)^2_{r,\mathbf{L}'};\mathbb{F}) \ar[from=uu,crossing over]
\end{tikzcd}\]

Since $PH_i((\mathsf{Star}_k)^2_{-,-};\mathbb{F}):(\mathbb{R}_{>0},\leq)\op\times (\mathbb{R}_{>0},\leq)^k\rightarrow \cat{vect}_{\mathbb{F}}$ is a persistence module, the bottom face of the cube is commutative. Moreover, Theorem~\ref{thm:2.7} implies that the top face of the cube is commutative. In addition, Lemma~\ref{thm:2.6} implies that the front and the back of the cube are commutative, while Theorem~\ref{thm:2.1} implies that the side faces of the cube are commutative. Notice that, by Theorem~\ref{thm:2.1}, all vertical arrows in the diagram are isomorphisms. Therefore, $PH_i((\mathsf{Star}_k)^2_{-,-};\mathbb{F})\cong PH_i(\lVert(G_k)_{-,-}\rVert;\mathbb{F})$. 
\end{proof}

\begin{corollary}\label{Coro:2:11}
For any $k\geq 4$, the induced morphism $$PH_1((\mathsf{Star}_k)^2_{r'\leq r,\mathbf{L}\leq \mathbf{L}'};\mathbb{F}): PH_1((\mathsf{Star}_k)^2_{r,\mathbf{L}};\mathbb{F})\rightarrow PH_1((\mathsf{Star}_k)^2_{r', \mathbf{L}'};\mathbb{F})$$ is injective for all $r'\leq r\in (\mathbb{R}_{>0}, \leq)$ and  $\mathbf{L}\leq \mathbf{L}'\in(\mathbb{R}_{>0})^k$.
\end{corollary}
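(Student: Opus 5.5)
By Theorem~\ref{thm:2.8}, it suffices to prove the statement for the graph model $PH_1(\lVert(G_k)_{-,-}\rVert;\mathbb{F})$. In that model every structure map $(G_k)_{r\leq r',\mathbf{L}\leq\mathbf{L}'}$ factors as $(G_k)_{r'\leq r,\mathbf{L}'}\circ(G_k)_{r,\mathbf{L}\leq\mathbf{L}'}$. Each factor is an inclusion of a subgraph: the first is an inclusion of subgraphs of $(G_k)_{\mathbf{L}'}$; the second, by Lemma~\ref{lem:monotonicity}, includes the vertex and edge sets of $(G_k)_{r,\mathbf{L}}$ into those of $(G_k)_{r,\mathbf{L}'}$, and Lemma~\ref{lemma:2.4} identifies the image with $\lVert(G_k)_{r,\mathbf{L}}\rVert$ up to isotopy. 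So the map in question is induced on $H_1$ by the inclusion of a subgraph $A$ into a graph $B$, where both are realized as $1$-dimensional CW complexes; the weights do not affect homology.

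The key point is that for graphs, $H_1$ of a subgraph injects into $H_1$ of the ambient graph. Since there are no $2$-cells, $H_1(\lVert A\rVert;\mathbb{F})=Z_1(A)=\ker(\partial_1^A)\subseteq C_1(A)$ and likewise $H_1(\lVert B\rVert;\mathbb{F})=\ker(\partial_1^B)$. The inclusion $C_1(A)\hookrightarrow C_1(B)$ is injective and commutes with $\partial_1$, so it restricts to an injection on cycles. Equivalently, one can use the long exact sequence of the pair, where $H_2(B,A)=0$ because $B$ has no $2$-cells, giving injectivity of $H_1(A)\to H_1(B)$. Composing the two injective factors yields injectivity of $PH_1(\lVert(G_k)_{r\leq r',\mathbf{L}\leq\mathbf{L}'}\rVert;\mathbb{F})$.

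Finally, the isomorphism of persistence modules from Theorem~\ref{thm:2.8} consists of pointwise isomorphisms commuting with the structure maps, so injectivity transfers to $PH_1((\mathsf{Star}_k)^2_{r'\leq r,\mathbf{L}\leq\mathbf{L}'};\mathbb{F})$.

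The main subtlety is confirming that the $\mathbf{L}$-direction map really behaves as a subgraph inclusion at the level of realizations inside $(\mathsf{Star}_k)^2_{r,\mathbf{L}'}$, rather than some other cellular map. Lemma~\ref{lemma:2.4} handles this, and combinatorially $\lVert (G_k)_{r,\mathbf{L}\leq\mathbf{L}'}\rVert$ is an injective cellular map sending edges homeomorphically onto edges, so the chain-level argument applies directly. The hypothesis $k\ge4$ is not needed for injectivity itself; it only ensures the statement is nonvacuous, since $H_1$ is nontrivial in that range.
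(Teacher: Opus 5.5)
Your proof is correct and is essentially the paper's argument: reduce to the graph model via Theorem~\ref{thm:2.8}, note that the structure map is an inclusion of subgraphs with no $2$-cells so that $H_1=Z_1$ injects, and transfer injectivity back through the isomorphism; your chain-level (or long-exact-sequence) justification just spells out the step the paper states in one line. The appeal to Lemma~\ref{lemma:2.4} is not needed at this stage, since the vertex and edge sets of $(G_k)_{\mathbf{L}}$ do not depend on $\mathbf{L}$ and the $\mathbf{L}$-direction map is already a literal subgraph inclusion; that lemma only enters in establishing Theorem~\ref{thm:2.8} itself.
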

\begin{proof}
Since $(G_k)_{r,\mathbf{L}}\rightarrow (G_k)_{r',\mathbf{L}'}$ is an inclusion map and $\lVert (G_k)_{r,\mathbf{L}}\rVert$ and $\lVert (G_k)_{r',\mathbf{L}'}\rVert$ do not have a 2-cell for all $r,r'>0$. Therefore, the induced morphism $PH_1(\lVert(G_k)_{r'\leq r,\mathbf{L}\leq \mathbf{L}'}\rVert;\mathbb{F})$ is injective. Theorem~\ref{thm:2.8} implies that the induced morphism $PH_1((\mathsf{Star}_k)^2_{r'\leq r,\mathbf{L}\leq \mathbf{L}'};\mathbb{F})$ is also injective. 
\end{proof}

\section{The Indecomposable Direct Summands of $PH_1((\mathsf{Star}_k)^2_{-,-};\mathbb{F})$}\label{sec:indecomp}

In this section, we work particularly on the case when edge-length vector $\mathbf{L}=(L_1,L_2,\dots, L_k)\in\mathbb{R}^k$, where $L_2,\dots,L_k$ are arbitrary but fixed positive real numbers, and only the first coordinate $L$ varies. Under this convention, we compute and investigate the structure of the $2$-parameter persistence module $PH_1((\mathsf{Star}_k)^2_{-,-};\mathbb{F})$ using the persistent model $(G_k)_{-,-}$, which was introduced in the previous section.



\subsection{Basic Properties of the Hyperplane Arrangement of $(\mathsf{Star}_k)^2_{r,\mathbf{L}}$}


Let $(\mathsf{Star}_k)^2_{\mathbf{L}}$ be the metric star graph of $k$ edges with the edge-length vector $\mathbf{L}=(L_1,L_2,\dots, L_k)$ where $L_2,\dots, L_k$ are arbitrary but fixed positive real numbers. Without loss of generality, we assume that $L_k\leq L_{k-1}\leq \cdots\leq L_3 \leq L_2$. Construct a bifiltration with respect to the parameters $r$ and $L$:

\begin{equation*}
    \begin{aligned}
        (\mathsf{Star}_k)^2_{-,-}: (\mathbb{R}_{>0},\leq)\op\times (\mathbb{R}_{>0},\leq)&\rightarrow \cat{Top}\\
        (r,L_1)&\mapsto (\mathsf{Star}_k)^2_{r,\mathbf{L}}
    \end{aligned}
\end{equation*}

Note that $(\mathsf{Star}_k)^2_{r,\mathbf{L}}$ has the following critical hyperplanes in the parameter space:
    \begin{equation}\label{eq:hyp:1}
        \begin{cases} 
            r=L_i, \quad\mbox{for all } i=1, 2,\dots,k.\\
            r=L_1+L_i, \quad \mbox{for all } i=2,\dots,k.
        \end{cases} 
    \end{equation}
    and
    \begin{equation}\label{eq:hyp:2}
        \begin{aligned}
            r&=L_i+L_j, \quad \mbox{for all } i,j=2,\dots,k \mbox{ and } i\neq j.
        \end{aligned}
    \end{equation}
    
These hyperplanes correspond to loci in parameter space where changes in the combinatorial type of \( (\mathsf{Star}_k)^2_{r, \mathbf{L}} \) may occur.

\begin{theorem}\label{thm:4:3}
Let $k\geq 4$ and $\mathbf{L}=(L_1,L_2,\dots, L_k)\in\mathbb{R}_{>0}^k$ where $L_k\leq L_{k-1}\leq \cdots\leq L_3 \leq L_2$ are arbitrary but fixed positive real numbers. Then
\begin{enumerate}
    \item when $r\leq L_1 \mbox{ and } r\leq L_k$, $\rank(H_1((\mathsf{Star}_{k})^2_{r,\mathbf{L}}))=k^2-3k+1$;
    \item when $r\leq L_1 \mbox{ and } L_k<r\leq L_{k-1}$, $\rank(H_1((\mathsf{Star}_{k})^2_{r,\mathbf{L}}))=k^2-5k+5$;
    \item when $r>L_1 \mbox{ and } r\leq L_k$, $\rank(H_1((\mathsf{Star}_{k})^2_{r,\mathbf{L}}))=k^2-5k+5$;
    \item when $r>L_2$, $\rank(H_1((\mathsf{Star}_{k})^2_{r,\mathbf{L}}))=0$.
\end{enumerate}
\end{theorem}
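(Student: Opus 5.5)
The plan is to reduce to a count on the combinatorial model. By Lemma~\ref{lemma:deformation} (or Theorem~\ref{thm:2.8}), $H_1((\mathsf{Star}_k)^2_{r,\mathbf{L}};\mathbb{F})\cong H_1(\lVert (G_k)_{r,\mathbf{L}}\rVert;\mathbb{F})$. Since $\lVert (G_k)_{r,\mathbf{L}}\rVert$ is a $1$-dimensional complex, its first Betti number is
\[
\rank H_1=|E_r|-|V_r|+c_r ,
\]
where $c_r$ is the number of connected components. So in each of the four regimes I will count surviving vertices and edges using $f_V$ and $f_E$, and then check connectivity (or acyclicity).

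First I make the edge set explicit. An edge of $(G_k)_{\mathbf{L}}$ of the first kind is $\{x0,xy\}$ with $x,y\in\{1,\dots,k\}$, $x\neq y$, and has $f_E=L_x$. An edge of the second kind is $\{0y,xy\}$ with $x,y\geq 1$, $x\neq y$, and has $f_E=L_y$. Hence there are $2k(k-1)$ edges in total. There are $k(k+1)$ vertices. Vertices $xy$ with $x,y\geq1$ have $f_V=L_x+L_y$, while $f_V(x0)=f_V(0x)=L_x$.

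\emph{Case (1):} $r\le \min_i L_i$. Every vertex and every edge survives. The graph is connected, since each $xy$ is joined to $x0$ and to $0y$, and the $x0$'s are linked through the $xy$'s. So the rank is $2k(k-1)-k(k+1)+1=k^2-3k+1$.

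\emph{Case (2):} $r\le L_1$ and $L_k<r\le L_{k-1}$. Every $L_i$ with $i<k$ satisfies $L_i\ge r$, and $L_x+L_y\ge r$ for $x,y\geq 1$. So exactly two vertices are lost, namely $k0$ and $0k$. Exactly the $2(k-1)$ edges with $f_E=L_k$ are lost, namely $\{k0,ky\}$ and $\{0k,xk\}$. Connectivity persists: $ky$ is still joined to $0y$ with $y\neq k$, and $xk$ is still joined to $x0$ with $x\neq k$. This gives $2(k-1)^2-(k^2+k-2)+1=k^2-5k+5$.

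\emph{Case (3):} $r>L_1$ and $r\le L_k$. This is the same computation with the index $1$ in place of $k$. It uses $L_1+L_y\ge L_y\ge r$ for the vertices $1y$ and $y1$.

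\emph{Case (4):} $r>L_2$. Here $r>L_i$ for all $i\ge2$, so the only edges that can survive have $f_E=L_1$. These are $\{10,1y\}$ and $\{01,x1\}$, present only if $L_1\ge r$. If $L_1<r$ there are no edges at all. Otherwise every edge is incident to $10$ or to $01$. These two vertices are not adjacent. The neighbourhoods $\{1y\}$ and $\{x1\}$ are disjoint and contain no other edges among themselves. So the graph is a disjoint union of two stars and isolated vertices, hence a forest, and $H_1=0$.

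The main obstacle I expect is bookkeeping rather than conceptual. One must make sure that in Cases (2)–(3) no vertex $xy$ with $x,y\ge1$ drops out, that ties such as $L_{k-1}=L_k$ cause no trouble, and that connectivity is verified carefully. One also must not implicitly assume $r>L_1$ in Case (4); the forest argument above is designed to cover $L_1\ge r$ as well.
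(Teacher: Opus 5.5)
Your proposal is correct and takes the paper's route. Like the paper, you pass to $\lVert (G_k)_{r,\mathbf{L}}\rVert$ via Theorem~\ref{thm:2.8} and apply the Euler--Poincar\'e formula, with the same counts: $k^2+k$ vertices and $2k^2-2k$ edges in case (1), and $k^2+k-2$ vertices and $2(k-1)^2$ edges in cases (2)--(3).

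Your case (4) argument is more complete than the paper's, which only cites Theorem~\ref{thm:2.1}; you show the surviving edges form two disjoint stars centred at $10$ and $01$, so the graph is a forest. One detail is worth stating explicitly: in cases (2)--(3), connectivity of the graph on the remaining $k-1$ indices needs $k-1\ge 3$, which is where the hypothesis $k\ge 4$ is used.
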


\begin{proof}
By Theorem~\ref{thm:2.8}, it suffices to compute the rank of $H_1((G_k)_{r,\mathbf{L}})$. When $r\leq L_1$ and $r\leq L_k$, $(G_k)_{r,\mathbf{L}}$ has $k^2+k$ vertices, $2k^2-2k$ edges, and $1$ path component. Hence $\rank (H_1(\lVert (G_k)_{r,\mathbf{L}}\rVert))=2k^2-2k-(k^2+k)+1=k^2-3k+1$. 

When $r>L_1$ and $r\leq L_k$, $(G_k)_{r,\mathbf{L}}$ has $k^2+k-2$ vertices, $2k^2-2k-2(k-1)$ edges, and $1$ path component. Hence $\rank (H_1(\lVert (G_k)_{r,\mathbf{L}}\rVert))=2k^2-2k-2(k-1)-(k^2+k-2)+1=k^2-5k+5$.

When $r\leq L_1$ and $L_k<r\leq L_{k-1}$, $(G_k)_{r,\mathbf{L}}$ has $k^2+k-2$ vertices, $2(k-1)^2$ edges, and $1$ path component. Hence $\rank (H_1(\lVert (G_k)_{r,\mathbf{L}}\rVert))=2(k-1)^2-(k^2+k-2)+1=k^2-5k+5$. 

Theorem~\ref{thm:2.1} implies that $\dim H_1((\mathsf{Star}_k)^2_{r,\mathbf{L}};\mathbb{F})=0$ for all $r>L_2$ and $\mathbf{L}=(L_1,L_2, \dots, L_k)$.
\end{proof}

We define the \key{reduced hyperplane arrangement} of 
$(\mathsf{Star}_k)^2_{r,\mathbf{L}}$ to be the collection of hyperplanes in $\mathbb{R}^2$ given by the system of equations~\eqref{eq:hyp:1}. The next theorem shows that these are precisely the hyperplanes along which the rank of $H_1((\mathsf{Star}_k)^2_{r,\mathbf{L}})$ may change.\\

\paragraph{\textbf{Notation:}}
Since in this section we will mainly focus on the case where 
$\mathbf{L} = (L_1, L_2, \dots, L_k)$ with $L_2, \dots, L_k$ fixed, we introduce the notation
\[
\mathbf{L} = (L, L_2, \dots, L_k)
\quad\text{with } L := L_1,
\]
to emphasize that the first coordinate is a parameter while the remaining edge lengths are constant.

\begin{theorem}\label{thm:4.4}
Let $k\geq 4$ and $\mathbf{L}=(L,L_2,\dots, L_k)\in\mathbb{R}_{>0}^k$ where $L_2,\dots, L_k$ are arbitrary but fixed positive real numbers. Let $C$ be a chamber given by the reduced hyperplane arrangement of $(\mathsf{Star}_k)^2_{r,\mathbf{L}}$. For all $(r^\ast, L^\ast) \in C \cap \{(r, L) \in \mathbb{R}^2: r = L_i + L_j\}$ such that $(r^\ast, L^\ast)$ is not contained in any hyperplane of the reduced hyperplane arrangement of $(\mathsf{Star}_k)^2_{r,\mathbf{L}}$, there exists \( \varepsilon > 0 \) such that for all \( (r, L) \in B_\varepsilon(r^\ast, L^\ast) \subset C \), 
$$\rank H_1((\mathsf{Star}_k)^2_{r,\mathbf{L}})=\rank H_1((\mathsf{Star}_k)^2_{r^\ast,\mathbf{L}^\ast})$$
where $\mathbf{L}^\ast=(L^\ast,L_2,\dots,L_k)$ and $B_\varepsilon(r^\ast, L^\ast)$ is the (open) ball of radius $\varepsilon$  centered at $(r^\ast, L^\ast)$.
\end{theorem}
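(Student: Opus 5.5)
The plan is to move the computation entirely to the graph model. By Theorem~\ref{thm:2.8} (or Theorem~\ref{thm:2.1} for fixed $\mathbf{L}$) it suffices to show that $\rank H_1(\lVert (G_k)_{r,\mathbf{L}}\rVert)$ is locally constant near $(r^\ast,L^\ast)$. Since $\lVert (G_k)_{r,\mathbf{L}}\rVert$ is a finite graph, this rank equals $|E_{r,\mathbf{L}}|-|V_{r,\mathbf{L}}|+\beta_0((G_k)_{r,\mathbf{L}})$. So I will show that the vertex set, the edge set and the number of components all stay the same on a small ball around $(r^\ast,L^\ast)$.

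\textbf{Step 1: edges.} The filtering values $f_E$ are single edge lengths $L_x$, with $L_0=0$ excluded since edges carry $L_x$ for the fixed leaf coordinate. Hence membership of an edge in $E_{r,\mathbf{L}}$ changes only across the hyperplanes $r=L_i$, $i=1,\dots,k$. Every such hyperplane belongs to the reduced arrangement \eqref{eq:hyp:1}, and $(r^\ast,L^\ast)$ avoids them. Choosing $\varepsilon$ smaller than the distance from $(r^\ast,L^\ast)$ to these finitely many hyperplanes makes $E_{r,\mathbf{L}}$ constant on $B_\varepsilon$. This also forces $B_\varepsilon\subset C$.

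\textbf{Step 2: vertices.} The vertex filtering values are $f_V(xy)=L_x+L_y$. The vertices $x0$, $0y$ and those with $1\in\{x,y\}$ give values $L_i$ or $L+L_i$, which are again in \eqref{eq:hyp:1}, so they are constant on $B_\varepsilon$. The only vertices that can change are $ij$ and $ji$ with $i,j\ge 2$, $i\neq j$, whose threshold is exactly the excluded hyperplane $r=L_i+L_j$. The key observation is that such a vertex is isolated in $(G_k)_{r,\mathbf{L}}$ whenever $r$ is near $L_i+L_j$. Indeed, all edges at $ij$ are of the form $\{ij,0j\}$ or $\{ij,i0\}$, with filtering values $L_j$ and $L_i$ respectively. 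Both are strictly less than $L_i+L_j\approx r$, and after shrinking $\varepsilon$ they are strictly less than $r$. So crossing $r=L_i+L_j$ only adds or deletes isolated vertices. Each such vertex contributes $+1$ to $|V|$ and $+1$ to $\beta_0$, so $|E|-|V|+\beta_0$ is unchanged. Equivalently, removing isolated points does not change $H_1$.

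\textbf{Step 3: conclusion.} Steps 1 and 2 show $H_1(\lVert (G_k)_{r,\mathbf{L}}\rVert)$ has the same rank as $H_1(\lVert (G_k)_{r^\ast,\mathbf{L}^\ast}\rVert)$ for all $(r,L)\in B_\varepsilon(r^\ast,L^\ast)$. Transferring back via Theorem~\ref{thm:2.8} gives the statement.

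\textbf{Main obstacle.} The delicate point is Step 2: one must check carefully from the definition of $E$ that every edge incident to $ij$ with $i,j\ge2$ is of the stated form, so that its $f_E$-value is a single $L_i$ or $L_j$. One must also ensure $\varepsilon$ is small enough that $r>\max(L_i,L_j)$ throughout. This follows because $r=L_i$ and $r=L_j$ are reduced hyperplanes avoided by the point, and $L_i+L_j>\max(L_i,L_j)$.
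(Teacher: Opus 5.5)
Your proposal is correct and follows essentially the same route as the paper: pass to $(G_k)_{r,\mathbf{L}}$ via Theorem~\ref{thm:2.8}, take $\varepsilon$ at most the distance to the reduced hyperplanes, and observe that crossing $r=L_i+L_j$ only adds or removes the isolated vertices $ij, ji$. This is the paper's count of $a-2m$ vertices, $b$ edges and $x-2m$ components, so $|E|-|V|+\beta_0$ is unchanged. Your explicit check that the only edges at $ij$ are $\{ij,0j\}$ and $\{ij,i0\}$, with $f_E$-values $L_j, L_i<r$ throughout the ball, spells out the justification the paper states only briefly ("since $r>L_{i'}$ and $r>L_{j'}$").
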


\begin{proof}
By Theorem~\ref{thm:2.8}, it suffices to show that there exists \( \varepsilon > 0 \) such that for all \( (r, L) \in B_\varepsilon(r^\ast, L^\ast) \subset C \), $\rank H_1\left( \left\| (G_k)_{r,L} \right\| \right) = \rank H_1\left( \left\| (G_k)_{r^\ast,L^\ast} \right\| \right)$. For all $(r^\ast, L^\ast)$, define $\varepsilon$ to be the minimal (Euclidean) distance between $(r^\ast, L^\ast)$ and hyperplanes in the reduced hyperplane arrangement. Since there are finitely many hyperplanes in the reduced hyperplane arrangement and $(r^\ast, L^\ast)$ is not contained in any such hyperplane, $\varepsilon>0$. Hence $B_\varepsilon(r^\ast, L^\ast) \subset C$. 

Note that the vertical hyperplane $r= L_i+L_j$ subdivides $B_\varepsilon(r^\ast, L^\ast)$ into two disjoint subsets: $A_{ij,\varepsilon}:=\{(r,L)\in B_\varepsilon(r^\ast, L^\ast): r\leq L_i+L_j\}$ and $B_\varepsilon(r^\ast, L^\ast)-A_{ij,\varepsilon}$. Assume for any $(r,L)\in A_{ij,\varepsilon}$, $(G_k)_{r,L}$ has $a$ vertices, $b$ edges, and $x$ path components. In particular, for each $i',j'$ such that $L_{i'}+L_{j'}=L_{i}+L_{j}$, vertices $i'j'$ and $j'i'$ are two path components of $(G_k)_{r,L}$ since $r>L_{i'}$ and $r>L_{j'}$. By the Euler-Poincaré formula, the rank of $H_1(\lVert (G_k)_{r,\mathbf{L}}\rVert)$ is $b-a+x$. On the other hand, for any $(r',L')\in B_\varepsilon(r^\ast, L^\ast)-A_{ij,\varepsilon}$, $(G_k)_{r',L'}$ has $a-2m$ vertices, $b$ edges, and $x-2m$ path components (where $m$ is the multiplicity of the vertical hyperplane $r=L_i+L_j$ in the (multi-)set of equations given by (\ref{eq:hyp:2})) since $r>L_i+L_j$. By the Euler-Poincaré formula,the rank of $H_1(\lVert (G_k)_{r',\mathbf{L}'}\rVert)$ is $b-(a-2m)+(x-2m)=b-a+x$.
\end{proof}

By Theorem~\ref{thm:4.4}, the rank of the first homology group of $(\mathsf{Star}_k)^2_{r,\mathbf{L}}$ remains constant within each chamber of the reduced hyperplane arrangement, where $\mathbf{L}=(L,L_2,\dots,L_k)$. Moreover, Corollary~\ref{Coro:2:11} ensures that the first homology groups $H_1((\mathsf{Star}_k)^2_{r,\mathbf{L}})$ and $H_1((\mathsf{Star}_k)^2_{r',\mathbf{L}'})$ are isomorphic when $(r,L)$ and $(r^\ast,L^\ast)$ (where $\mathbf{L}^\ast=(L^\ast,L_2, \dots,L_k)$) lying within the same chamber in the reduced hyperplane arrangement. Hence, for the purpose of determining the behavior of the first homology group of $(\mathsf{Star}_k)^2_{r,\mathbf{L}}$ as parameters $r$ and $L$ vary, it suffices to consider the reduced hyperplane arrangement rather than the complete one.

\begin{theorem}\label{cor:4.4}
    Let $k\geq 4$ and $\mathbf{L}=(L,L_2,\dots, L_k)\in\mathbb{R}_{>0}^k$ where $L_k\leq\cdots\leq L_2$ are arbitrary but fixed positive real numbers. When $L_{4}<r$ and $L<r$, $\rank (H_1(\mathsf{Star}_k)^2_{r,\mathbf{L}})=0$.
\end{theorem}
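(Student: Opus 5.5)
The plan is to reduce, via Theorem~\ref{thm:2.8}, to showing that the weighted graph $(G_k)_{r,\mathbf{L}}$ is a forest whenever $r>L_4$ and $r>L=L_1$. Then $H_1(\lVert (G_k)_{r,\mathbf{L}}\rVert;\mathbb{F})=0$, since a one-dimensional CW complex has trivial first homology exactly when it contains no cycle. So everything comes down to reading off the vertices and edges of $(G_k)_{r,\mathbf{L}}$ from the filtering functions $f_V$ and $f_E$.

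\emph{Step 1: determine which edges survive.} By definition, every edge of $(G_k)_{\mathbf{L}}$ joins a vertex $xy$ with $x,y\neq 0$ to a vertex with a $0$ entry.
\begin{itemize}
\item An edge $\{xy, x0\}$ has filtering value $f_E=L_x$.
\item An edge $\{xy, 0y\}$ has filtering value $f_E=L_y$.
\end{itemize}
There are no edges between two vertices that both contain a $0$. For instance, $x0$ and $0y$ agree in neither coordinate, and $x0$, $x'0$ with $x\neq x'$ violate the condition $x_1x_2=0$. Let
\[
S=\{x\in\{1,\dots,k\}: L_x\geq r\}.
\]
Because $r>L_1$ and $r>L_4\geq L_5\geq\cdots\geq L_k$, we get $S\subseteq\{2,3\}$. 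Hence the only surviving edges are $\{xy,x0\}$ and $\{yx,0x\}$ with $x\in S$. Moreover, a vertex $x0$ or $0x$ survives exactly when $x\in S$, since $f_V(x0)=f_V(0x)=L_x$.

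\emph{Step 2: show the graph is a forest.} The surviving graph is bipartite, with one side the ``central'' vertices $x0,0x$ ($x\in S$) and the other side the vertices $xy$ with $x,y\neq 0$. A non-central vertex $xy$ has degree $[x\in S]+[y\in S]\leq 2$. Degree $2$ occurs only when $\{x,y\}=S=\{2,3\}$, that is, only for the two vertices $23$ and $32$.

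Suppose the graph contained a cycle. Since the graph is bipartite with no central--central edges, the cycle would alternate between central and non-central vertices. Every non-central vertex on it would need degree at least $2$, so it would use only $23$ and/or $32$. Their neighbours are as follows:
\begin{itemize}
\item $23$ is adjacent to $20$ and $03$;
\item $32$ is adjacent to $30$ and $02$.
\end{itemize}
These four central vertices are distinct, so $23$ and $32$ only contribute two disjoint paths of length $2$, and no cycle can close up. Every other non-central vertex is a leaf or isolated. Therefore $(G_k)_{r,\mathbf{L}}$ is a forest.

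The case $S=\emptyset$ (when $r>L_2$) is covered by the same argument: there are no edges at all, which is also consistent with Theorem~\ref{thm:4:3}(4).

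\emph{Step 3: conclude.} By the forest property, $\rank H_1(\lVert (G_k)_{r,\mathbf{L}}\rVert)=0$. Transporting this through Theorem~\ref{thm:2.8} (or Lemma~\ref{lemma:deformation} at a single parameter value) gives $\rank H_1((\mathsf{Star}_k)^2_{r,\mathbf{L}})=0$.

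The only delicate point is Step 1. One has to confirm carefully from the definition of adjacency in $(G_k)_{\mathbf{L}}$ that no edges join two $0$-containing vertices, and that the edge filtering value is the length of the fixed leaf rather than the moving one. The cycle analysis in Step 2 relies entirely on this.
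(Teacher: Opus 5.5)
Your proof is correct, but it argues differently from the paper.

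\textbf{The paper's route.} The paper works only in the subregion $L_4<r\le L_3$, $L<r\le L+L_k$. There it counts the vertices ($k(k-1)+4-2h$), edges ($4(k-1)$) and components ($(k-2)(k-3)+2-2h$) of $(G_k)_{r,\mathbf{L}}$, with $h$ recording the vertical hyperplanes $r=L_i+L_j$ and justified by Theorem~\ref{thm:4.4}. It then gets rank $0$ from the Euler--Poincar\'e formula. Finally it spreads the vanishing to the whole region $r>L_4$, $L<r$ using the injectivity of the structure maps (Corollary~\ref{Coro:2:11}).

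\textbf{Your route.} You identify the surviving edges as exactly those incident to the central vertices $x0,0x$ with $x\in S\subseteq\{2,3\}$. You then prove acyclicity by a degree argument: only $23$ and $32$ have degree $2$, and their neighbourhoods $\{20,03\}$ and $\{30,02\}$ are disjoint. This is done pointwise, at every parameter value in the region.

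\textbf{What each approach buys.} Your argument does not depend on which non-central vertices $xy$ survive, since isolated or pendant vertices cannot create cycles. So the $L_i+L_j$ bookkeeping and Theorem~\ref{thm:4.4} are unnecessary, and so is any propagation step. That matters here. Injectivity only transfers vanishing from $(r',L')$ to $(r,L)$ when $r'\le r$ and $L'\ge L$. Hence the paper's extension cannot reach points with $L_3\le L<r\le L_2$, since a base point would need $L\le L'<r'\le L_3$. Moreover, its base subregion is empty when $L_3=L_4$, which the hypotheses allow. Your forest argument handles all of these cases uniformly. What the paper's approach offers is consistency with the Euler--Poincar\'e counting used throughout Section~\ref{sec:indecomp}. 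Your two components through $23$ and $32$, together with the isolated vertices, reproduce the paper's component count in its subregion.
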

\begin{proof}
    We first consider the case when $L_{4}<r\leq L_{3}$ and $L<r\leq L+L_{k}$. By Theorem~\ref{thm:2.8}, it suffices to show that $\rank (H_1(\lVert (G_k)_{r,\mathbf{L}}\rVert))=0$ when $L_{4}<r\leq L_{3}$ and $L<r\leq L+L_{k}$. By Theorem~\ref{thm:4.4}, we may assume there are $h$ vertical hyperplanes of the form $r=L_i + L_j$ such that $L_i+L_j\leq L_{4}$. Note that $\lVert(G_k)_{r,\mathbf{L}}\rVert)$ has $k(k-1)+4-2h$ vertices, $4(k-1)$ edges, and $(k-2)(k-3)+2-2h$ path components. By the Euler-Poincaré formula, the rank of $H_1(\lVert (G_k)_{r,\mathbf{L}}\rVert)$ is $4(k-1)-(k(k-1)+4-2h)+(k-3)(k-2)+2-2h=0$. 

    By Corollary~\ref{Coro:2:11}, the induced morphism $PH_1((\mathsf{Star}_k)^2_{r'\leq r,\mathbf{L}\leq \mathbf{L}'};\mathbb{F})$ is injective for all $r'\leq r\in (\mathbb{R}_{>0}, \leq)$ and  $\mathbf{L}\leq \mathbf{L}'\in(\mathbb{R}_{>0})^k$. Therefore, $\rank (PH_1(\mathsf{Star}_k)^2_{r,\mathbf{L}}))=0$ when $L_{4}<r$ and $L<r$.
\end{proof}


\begin{theorem}\label{thm:4.5}
     Let $k\geq 4$ and $\mathbf{L}=(L,L_2,\dots, L_k)\in\mathbb{R}_{>0}^k$ where $L_k\leq\cdots\leq L_2$ are arbitrary but fixed positive real numbers. When $L_{4}<r\leq L_{3}$ and $r\leq L$, $\rank (H_1(\mathsf{Star}_k)^2_{r,\mathbf{L}}))=1$.
\end{theorem}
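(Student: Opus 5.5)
The plan is to work entirely in the combinatorial model. By Theorem~\ref{thm:2.8} (indeed already by Theorem~\ref{thm:2.1}, since $\mathbf{L}$ is fixed), it suffices to show that $\rank H_1(\lVert (G_k)_{r,\mathbf{L}}\rVert)=1$ whenever $L_4<r\le L_3$ and $r\le L$. Because $\lVert (G_k)_{r,\mathbf{L}}\rVert$ is a graph, I would compute this rank as $\#E-\#V+\#\pi_0$ component by component. The hypotheses say exactly that the leaves $1,2,3$ satisfy $L_1=L\ge r$ and $L_2\ge L_3\ge r$, while every leaf $i\ge 4$ has $L_i\le L_4<r$.

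\emph{Step 1: the edges.} An edge $\{xy_1,xy_2\}$ with $y_1y_2=0$ has filtration value $f_E=L_x$, so it survives iff $x\in\{1,2,3\}$. Symmetrically, an edge $\{x_1y,x_2y\}$ with $x_1x_2=0$ survives iff $y\in\{1,2,3\}$. For $x\in\{1,2,3\}$ both endpoints $x0$ and $xy$ are automatically vertices, since $f_V\ge L_x\ge r$. Hence the surviving edges are precisely the stars $x0$--$xy$ and $0y$--$xy$ for $x,y\in\{1,2,3\}$, giving $6(k-1)$ edges in total.

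\emph{Step 2: the vertices.} Every vertex $xy$ with $\{x,y\}\cap\{1,2,3\}=\emptyset$ meets no surviving edge, so any such vertex present in $V_r$ is isolated and contributes $0$ to the rank. The count of these vertices depends on the vertical hyperplanes $r=L_i+L_j$, but this dependence is harmless, exactly as in the proof of Theorem~\ref{thm:4.4}. The remaining vertices are the six vertices $10,20,30,01,02,03$ together with the ordered pairs $xy$ of distinct leaves with at least one of $x,y$ in $\{1,2,3\}$. That gives $6+k(k-1)-(k-3)(k-4)=6k-6$ vertices.

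\emph{Step 3: connectivity.} I would check that these $6k-6$ vertices form a single component. Every $xy$ in it is joined to $x0$ or to $0y$. The six ``axis'' vertices are linked through, for example, the path $10$--$12$--$02$--$32$--$30$--$31$--$01$--$21$--$20$--$23$--$03$--$13$--$10$. This path is in fact a closed $12$-cycle, and it will serve as the explicit generator.

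Euler--Poincar\'e then gives rank $6(k-1)-(6k-6)+1=1$ on the nontrivial component, and $0$ on each isolated vertex, so the total rank is $1$. The main obstacle I expect is the bookkeeping in Step 2: one must make sure that no vertex involving only leaves $\ge 4$ or $0$ acquires an edge. This is guaranteed by Step 1, since every surviving edge has its fixed coordinate in $\{1,2,3\}$.
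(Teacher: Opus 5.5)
Your proposal is correct and follows the paper's proof: you reduce to the graph model and apply Euler--Poincar\'e to the $6(k-1)$ surviving edges, the $6k-6$ vertices of the main component, and the isolated leaf--leaf vertices $xy$ with $x,y\ge 4$. The paper fixes the number of isolated vertices as $(k-3)(k-4)-2h$ via Theorem~\ref{thm:4.4}, whereas you note directly that each one contributes nothing to the rank. Your explicit $12$-cycle also supplies the connectivity check that the paper only asserts, and under the reduction it becomes the cycle $\sigma_1$ used in the proof of Theorem~\ref{thm:5.8}.
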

\begin{proof}
    By Theorem~\ref{thm:2.8}, it suffices to show that $\rank (H_1(\lVert (G_k)_{r,\mathbf{L}}\rVert))=1$ when $L_{4}<r\leq L_{3}$ and $r\leq L$. By Theorem~\ref{thm:4.4},  we may assume there are $h$ vertical hyperplanes of the form $r=L_i + L_j$ such that $L_i+L_j\leq L_{4}$. Note that $\lVert(G_k)_{r,\mathbf{L}}\rVert)$ has $k(k-1)+6-2h$ vertices, $6(k-1)$ edges, and $(k-3)(k-4)+1-2h$ path components.  By the Euler-Poincaré formula, the rank of $H_1(\lVert (G_k)_{r,\mathbf{L}}\rVert)$ is $6(k-1)-(k(k-1)+6-2h)+(k-3)(k-4)+1-2h=1$.
\end{proof}

\begin{theorem}\label{thm:4.6-1}
Let $2\leq z\preceqdot y\preceqdot x\leq k$ and $L_k\leq\cdots\leq L_2$ be arbitrary but fixed positive real numbers. Define $A_1=\{(r,L):L_x< r\leq L_y, L<r\leq L+L_k\}$ and $A_2=\{(r,L):L_y< r\leq L_z$, and $L<r\}$. Then for all $(r_0, L)\in A_1$ and $(r_0^\ast, L^\ast)\in A_2$,
$$\rank(H_1(\mathsf{Star}_k)^2_{r_0,\mathbf{L}})=\rank(H_1(\mathsf{Star}_k)^2_{r_0^\ast,\mathbf{L}^\ast})$$
where $\mathbf{L}=(L,L_2,\dots, L_k)$ and $\mathbf{L}^\ast=(L^\ast,L_2,\dots, L_k)$.
\end{theorem}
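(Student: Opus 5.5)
The plan is to reduce both sides to an Euler-characteristic count on the graph model, exactly as in the proofs of Theorem~\ref{cor:4.4} and Theorem~\ref{thm:4.5}. By Theorem~\ref{thm:2.8} it suffices to compare $\rank H_1(\lVert (G_k)_{r_0,\mathbf{L}}\rVert)$ and $\rank H_1(\lVert (G_k)_{r_0^\ast,\mathbf{L}^\ast}\rVert)$. By Theorem~\ref{thm:4.4}, the vertical hyperplanes $r=L_i+L_j$ from (\ref{eq:hyp:2}) do not change the rank. So inside $A_1$ and inside $A_2$ I may pick generic representatives off all such hyperplanes. By Corollary~\ref{Coro:2:11} together with Theorem~\ref{thm:4.4}, the rank is constant on each of $A_1$ and $A_2$, so one point in each region suffices.

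\textbf{Structure of the graph.} The key observation is that for $r>L$ the edge set of $(G_k)_{r,\mathbf{L}}$ depends only on the set
$$M_r=\{i\in\{2,\dots,k\}: L_i\geq r\}.$$
The condition $r>L$ holds on both $A_1$ and $A_2$, so leaf $1$ never contributes edges, since $f_E$ of any edge with fixed coordinate $1$ equals $L<r$. For each $i\in M_r$ there are two stars, each of which is a tree:
\begin{enumerate*}[label=(\roman*)]
\item one centered at $i0$ with leaves $ij$, $j\neq 0,i$;
\item one centered at $0i$ with leaves $ji$.
\end{enumerate*}
Hence $|E_r|=2|M_r|(k-1)$. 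All vertices of these stars have $f_V\geq L_i\geq r$, so they are present. Every other vertex in $V_r$, including the vertices $1j$, $j1$ whose presence depends on whether $L+L_j\geq r$, is isolated. An isolated vertex contributes $+1$ to both the vertex count and the component count, so it cancels in $b-a+c$. This is why the extra constraint $r\leq L+L_k$ in $A_1$, and its absence in $A_2$, does not affect the rank.

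\textbf{Counting cycles.} After discarding isolated vertices, the graph is a union of $2m$ trees, where $m=|M_r|$. Two trees are glued exactly at a vertex $ii'$ with $i,i'\in M_r$, $i\neq i'$, which lies in the "first-coordinate" star of $i$ and the "second-coordinate" star of $i'$. Contracting each tree to a point therefore gives a bipartite graph on $2m$ nodes with $m(m-1)$ edges, namely $K_{m,m}$ minus a perfect matching. Consequently
$$\rank H_1 = m(m-1)-2m+c_m,$$
where $c_m$ is the number of components of $K_{m,m}$ minus a matching: $c_m=1$ for $m\geq3$, $c_2=2$, and $c_1=2$. This reproduces the values $0$ and $1$ obtained in Theorem~\ref{cor:4.4} and Theorem~\ref{thm:4.5}, which I will use as a sanity check.

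\textbf{Identifying $m$ on each region, and the main obstacle.} The remaining step is to determine $M_r$ on $A_1$ and on $A_2$ using the covering relations $z\preceqdot y\preceqdot x$ and $L_k\leq\cdots\leq L_2$. I expect this bookkeeping of indices to be the main obstacle. Ties among the $L_i$ have to be handled, which is precisely where the covering relation $\preceqdot$ (taken with respect to the order by length rather than by index) matters. One must also verify that the two region descriptions yield the same effective value of $m$, or values $m\in\{1,2\}$ giving the same count, once degenerate ties are accounted for. If, under the paper's convention for $\preceqdot$, the sets $M_r$ on $A_1$ and $A_2$ turn out to have different sizes with $m\geq 3$, the formula above will detect this. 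In that case the precise convention for $\preceqdot$ (and the intended ranges of $A_1$, $A_2$) is what must be pinned down first. Once $m$ is identified on each region, the equality of ranks follows from the displayed formula.
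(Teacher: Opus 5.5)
\textbf{The proposal has a genuine gap, and the statement as printed is false for $y\ge 4$.}

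Your structural analysis is correct, and cleaner than the paper's raw Euler count. For $r>L$, the non-isolated part of $(G_k)_{r,\mathbf{L}}$ is the union of the $2m$ stars centred at $i0$ and $0i$ for $i\in M_r$. Its nerve is $K_{m,m}$ minus a perfect matching, and $\rank H_1=m^2-3m+c_m$. This equals $0,0,1,5$ for $m=1,2,3,4$ and is strictly increasing from $m=2$ on.

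The gap is that you stop exactly at the decisive step, and that step fails. No delicate bookkeeping is needed. Because $L_x<r\le L_y$ and the $L_i$ are monotone, $M_r=\{2,\dots,y\}$ on $A_1$. On $A_2$, $L_y<r\le L_z$ gives $M_r=\{2,\dots,z\}=\{2,\dots,y-1\}$. Under any reading of the covering relation, $M_r$ on $A_2$ is a proper subset of $M_r$ on $A_1$, so your own formula gives different ranks as soon as $y\ge 4$.

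Concretely, take $k=5$, $(L_2,L_3,L_4,L_5)=(4,3,2,1)$ and $(z,y,x)=(3,4,5)$. A direct count gives rank $1$ at $(r,L)=(1.5,1)\in A_1$ ($24$ edges, $26$ vertices, $3$ components). It gives rank $0$ at $(2.5,1)\in A_2$ ($16$ edges, $22$ vertices, $6$ components); this value is also forced by Theorem~\ref{cor:4.4}. In general the discrepancy equals the multiplicity of the $R_y$ summands in Theorem~\ref{thm:5.10}.

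The paper's proof only balances because it records an edge change of $-2(k-h)+2(k-h-1)=-2$. In fact all $2(k-1)$ edges $\{y0,yj\}$, $\{0y,jy\}$ (those with $f_E=L_y$) disappear when $r$ crosses $L_y$.

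What is evidently intended is $A_2=\{(r,L): L_y<r\le L_z,\ r\le L\}$. Two pieces of evidence point this way. The paper's count of $a-2+2-2m$ vertices suggests losing $y0,0y$ and gaining $10,01$. Its summary figure pairs the rank-$1$ region $L_5<r\le L_4$, $L<r$ with the rank-$1$ region of Theorem~\ref{thm:4.5}.

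For that version your method finishes in one line. When $r\le L$, leaf $1$ behaves like any other leaf, so the relevant index set on $A_2$ is $\{1,2,\dots,z\}$. Its size is $y-1=|M_r|$ on $A_1$, so the two ranks coincide. As you correctly observed, the extra condition $r\le L+L_k$ in $A_1$ is irrelevant, since it only affects isolated vertices.

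You should have carried out the identification of $m$, which is immediate, and drawn this conclusion. Leaving the argument conditional on an unresolved convention means the proposal proves neither the printed statement nor the intended one.
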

\begin{proof}
By Theorem~\ref{thm:2.8}, it suffices to show that $\rank (H_1(\lVert (G_k)_{r_0,\mathbf{L}}\rVert))=\rank (H_1(\lVert (G_k)_{r_0^\ast,\mathbf{L}^\ast}\rVert))$. By Theorem~\ref{thm:4.4},  we may assume there are $h$ vertical hyperplanes of the form $r=L_i + L_j$ such that $L_i+L_j< L_{y}$ and $m$ vertical hyperplanes of the form $r=L_i + L_j$ such that $L_i+L_j=L_{y}$. 

Assume $(G_k)_{r_0,\mathbf{L}}$ has $a$ vertices, $b$ edges, and $x$ path components. By the Euler-Poincaré formula, the rank of $H_1(\lVert (G_k)_{r_0,\mathbf{L}}\rVert)$ is $b-a+x$. Note that $(G_k)_{r_0^\ast,\mathbf{L}^\ast}$ has $a-2+2-2m$ vertices, $b-2(k-h)+2(k-h-1)$ edges, and $x-2m+2$ path components.  By the Euler-Poincaré formula, $$\rank (H_1(\lVert (G_k)_{r_0^\ast,\mathbf{L}^\ast}\rVert))=b-2(k-h)+2(k-h-1)-(a-2m)+(x-2m+2)=b-a+x$$

Therefore, for all $(r_0, L)\in A_1$ and $(r_0^\ast, L^\ast)\in A_2$,
$\rank(H_1(\mathsf{Star}_k)^2_{r_0,\mathbf{L}})=\rank(H_1(\mathsf{Star}_k)^2_{r_0^\ast,\mathbf{L}^\ast})$.
\end{proof}

\begin{theorem}\label{thm:4.7}
Let $2\leq y\preceqdot x\leq k$ and $L_k\leq\cdots\leq L_2$ be arbitrary but fixed positive real numbers. Define $R=\{(r,L): L_x< r\leq L_y, L<r\}$. Then for all $(r_0,L_1),(r_0^\ast,L_1^\ast)\in R$,

$$\rank(H_1(\mathsf{Star}_k)^2_{r_0,\mathbf{L}})=\rank(H_1(\mathsf{Star}_k)^2_{r_0^\ast,\mathbf{L}^\ast})$$
where $\mathbf{L}=(L_1,L_2,\dots, L_k)$ and $\mathbf{L}^\ast=(L_1^\ast,L_2,\dots, L_k)$.
\end{theorem}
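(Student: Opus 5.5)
By Theorem~\ref{thm:2.8}, it suffices to show that $\rank H_1(\lVert (G_k)_{r,\mathbf{L}}\rVert)$ is constant on $R$. We compute it by the Euler--Poincaré formula $b-a+x$, where $a$, $b$, $x$ are the numbers of vertices, edges and path components of $(G_k)_{r,\mathbf{L}}$. The strategy mirrors Theorem~\ref{thm:4.6-1}: we split $R$ along the only lines that can cut through it and show that $b-a+x$ does not jump across any of them.

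\textbf{Which lines can meet $R$.} On $R$ we have $L_x<r\leq L_y$ with $y\preceqdot x$, so no reduced hyperplane $r=L_i$ with $i\geq 2$ meets the interior of $R$. The line $r=L$ bounds $R$. The only remaining reduced hyperplanes are $r=L+L_i$ for $i\geq 2$. These subdivide $R$ into horizontal strips
\[
L+L_{i+1}<r\leq L+L_i ,
\]
together with the strip $L<r\leq L+L_k$ and the region $r>L+L_2$. The vertical lines $r=L_i+L_j$ with $i,j\geq 2$ may also cross $R$. By Theorem~\ref{thm:4.4} they do not change the rank, because they only delete pairs of isolated vertices $ij$, $ji$, so $a$ and $x$ drop by the same amount.

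\textbf{Invariance across $r=L+L_i$.} It therefore suffices to show invariance across each line $r=L+L_i$, $i\geq 2$, inside $R$. Crossing $r=L+L_i$ (going up in $r$) changes only the vertices $1i$ and $i1$, since $f_V(1i)=L+L_i$.
\begin{itemize}
\item[(1)] Check which edges at $1i$ and $i1$ survive. We have $r>L$, so any edge with $f_E$-value $L_1=L$ is already absent. The remaining edges at $1i$ are those $\{1i,10\}$-type edges, which have $f_E=L$, and those $\{1i,0i\}$-type edges, which have $f_E=L_i$. Since $r>L_x$, every edge at $1i$ with $f_E=L_i$ is present exactly when $L_i\geq r$.
\item[(2)] In the strips we care about, $r>L$ forces $r\leq L+L_i$ to hold just below the line. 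Here there are two cases.
  \begin{itemize}
  \item[(a)] If $L_i<r$, the vertices $1i$ and $i1$ are isolated. Removing them lowers $a$ and $x$ by $2$ each, and $b-a+x$ is unchanged.
  \item[(b)] If $L_i\geq r$, each of $1i$ and $i1$ is a leaf attached by the single edge $\{1i,0i\}$ (respectively $\{i1,i0\}$). Removing a leaf and its edge lowers $a$ and $b$ by $1$ each and leaves $x$ fixed, so $b-a+x$ is again unchanged.
  \end{itemize}
\end{itemize}

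\textbf{Main obstacle.} The hardest step is the bookkeeping in (2): confirming that for $r>L$ the vertices $1i$ and $i1$ really have degree at most one. This means verifying from the definition of $f_E$ that the edges $\{1i,1j\}$ are absent (they require $j$ or $i$ to be $0$ and have $f_E=L$) and that $\{1i,0i\}$ is the only possible survivor. Once this local computation is done, chaining the strips together, and using the convexity/connectedness of $R$ through comparable points, gives constant rank on all of $R$.
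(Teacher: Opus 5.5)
Your proof is correct and follows the paper's argument. Via Theorem~\ref{thm:2.8} you compute $b-a+x$ and observe that crossing $r=L+L_i$ inside $R$ only deletes the vertices $1i$ and $i1$, which are isolated there, so $a$ and $x$ drop by the same amount ($2m$ in the paper's notation) while $b$ is unchanged. Your case (b) is in fact vacuous, since on the line $r=L+L_i$ with $L>0$ one always has $L_i<r$. Your explicit handling of the vertical lines $r=L_i+L_j$ is slightly more complete than the paper, which disposes of them through its quick reduction to $r_0=r_0^\ast$.
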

\begin{proof}
    Since $(\mathsf{Star}_k)^2_{r_0,\mathbf{L}}$ and $(\mathsf{Star}_k)^2_{r_0^\ast,\mathbf{L}^\ast}$ have the same homotopy type when $(r_0,L_1)$ and $(r_0^\ast,L_1^\ast)$ lie in the same chamber in the hyperplane arrangement, the statement holds in this case.

    Now we assume $(r_0,L_1)$ and $(r_0^\ast,L_1^\ast)$ lie in different chambers. Since the homotopy type of $(\mathsf{Star}_k)^2_{r,\mathbf{L}}$ remains unchanged within each chamber, and since every chamber is connected and convex, we may assume that $r_0=r_0^\ast$. In addition, without loss of generality, we assume that there is exactly one hyperplane $r=L+L_i$ such that $L_1+L_i\geq r$ and $L_1^\ast+L_i< r$, and let $m$ denote the multiplicity of the hyperplane. 
    
    By Theorem~\ref{thm:2.8}, it suffices to show that $\rank (H_1(\lVert (G_k)_{r_0,\mathbf{L}}\rVert))=\rank (H_1(\lVert (G_k)_{r_0,\mathbf{L}^\ast}\rVert))$ for all $(r_0,L_1), (r_0,L_1^\ast)\in R$. Assume $(G_k)_{r_0,\mathbf{L}}$ has $a$ vertices, $b$ edges, and $x$ path components. By the Euler-Poincaré formula, the rank of $H_1(\lVert (G_k)_{r_0,\mathbf{L}}\rVert)$ is $b-a+x$. Note that $(G_k)_{r_0,\mathbf{L}^\ast}$ has  $a-2m$ vertices, $b$ edges, and $x-2m$ path components. By the Euler-Poincaré formula, the rank of $H_1(\lVert (G_k)_{r_0,\mathbf{L}^\ast}\rVert)$ is $b-(a-2m)+(x-2m)$. Therefore, $\rank(H_1(\mathsf{Star}_k)^2_{r_0,\mathbf{L}})=\rank(H_1(\mathsf{Star}_k)^2_{r_0,\mathbf{L}^\ast})$.
\end{proof}

Theorem~\ref{thm:4.7} asserts that the rank of $H_1(\mathsf{Star}_k)^2_{r,\mathbf{L}}$ is constant in the region $R=\{(r,L): L_x< r\leq L_y, L<r\}$. Moreover, by Corollary~\ref{Coro:2:11}, $PH_1((\mathsf{Star}_k)^2_{r^\ast\leq r,\mathbf{L}\leq \mathbf{L}^\ast}$ is injective for all $r^\ast\leq r$ and $\mathbf{L}\leq \mathbf{L}^\ast$,  where $\mathbf{L}=(L,L_2,\dots,L_k)$ and $\mathbf{L}^\ast=(L^\ast,L_2,\dots,L_k)$. Therefore, these injective maps are necessarily isomorphisms for all $(r,L)\leq(r^\ast,L^\ast)\in R$.
Consequently, no first-degree homological 
change occurs near the hyperplane $r = L + L_i$ for $i=1,\dots,k$, and thus these hyperplanes may be removed from the reduced hyperplane arrangement.
By a slight abuse of terminology, we continue to refer to this new hyperplane arrangement as the reduced hyperplane arrangement.

Figure~\ref{fig:hpl-arrange-0'} summarizes the result of Theorem~\ref{thm:4:3}, Theorem~\ref{cor:4.4}, Theorem~\ref{thm:4.5}, Theorem~\label{thm:4.6}, and Theorem~\ref{thm:4.7}.

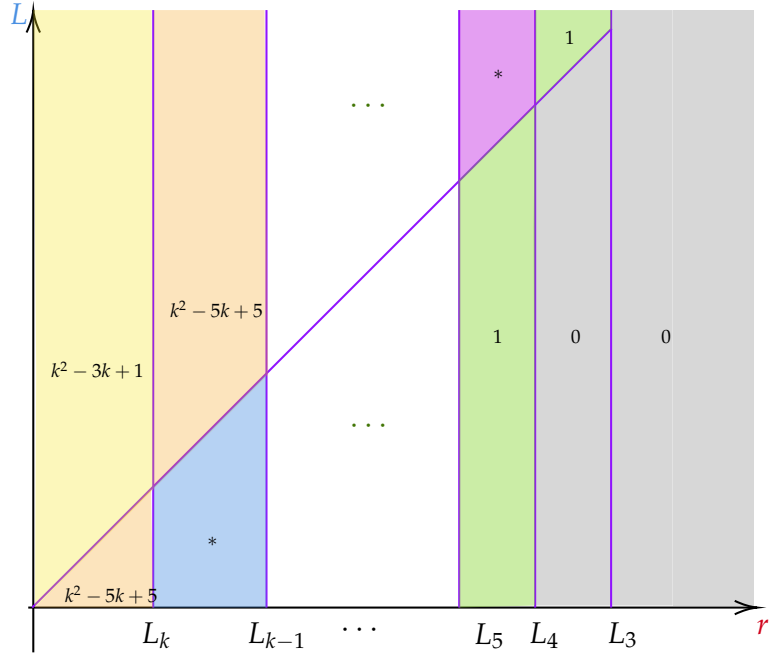
\begin{figure}[htbp!]
\centering
 \resizebox{!}{0.6\textwidth}{

\tikzset{every picture/.style={line width=0.75pt}} 

\begin{tikzpicture}[x=0.75pt,y=0.75pt,yscale=-1,xscale=1]

\draw  [draw opacity=0][fill={rgb, 255:red, 155; green, 155; blue, 155 }  ,fill opacity=0.4 ] (328.34,19) -- (328.34,309.31) -- (261.34,309.31) -- (260.5,124.5) -- (261.5,65) -- (298.5,28) -- (298.34,19) -- cycle ;
\draw  [draw opacity=0][fill={rgb, 255:red, 155; green, 155; blue, 155 }  ,fill opacity=0.4 ] (367.65,19) -- (328.34,19) -- (328.34,309.31) -- (367.65,309.31) -- cycle ;
\draw    (14.39,310.31) -- (367.5,310.31) ;
\draw [shift={(369.5,310.31)}, rotate = 180] [color={rgb, 255:red, 0; green, 0; blue, 0 }  ][line width=0.75]    (10.93,-3.29) .. controls (6.95,-1.4) and (3.31,-0.3) .. (0,0) .. controls (3.31,0.3) and (6.95,1.4) .. (10.93,3.29)   ;
\draw    (16.85,332.15) -- (16.85,21) ;
\draw [shift={(16.85,19)}, rotate = 90] [color={rgb, 255:red, 0; green, 0; blue, 0 }  ][line width=0.75]    (10.93,-3.29) .. controls (6.95,-1.4) and (3.31,-0.3) .. (0,0) .. controls (3.31,0.3) and (6.95,1.4) .. (10.93,3.29)   ;
\draw [color={rgb, 255:red, 144; green, 19; blue, 254 }  ,draw opacity=1 ][line width=0.75]    (16.3,310.31) -- (298.61,28) ;
\draw [color={rgb, 255:red, 144; green, 19; blue, 254 }  ,draw opacity=1 ][line width=0.75]    (75.28,310.31) -- (75.28,19) ;
\draw [color={rgb, 255:red, 144; green, 19; blue, 254 }  ,draw opacity=1 ][line width=0.75]    (130.45,310.31) -- (130.45,19) ;
\draw [color={rgb, 255:red, 144; green, 19; blue, 254 }  ,draw opacity=1 ][line width=0.75]    (261.34,310.31) -- (261.34,19) ;
\draw  [draw opacity=0][fill={rgb, 255:red, 248; green, 231; blue, 28 }  ,fill opacity=0.3 ] (74.5,19) -- (75.56,252.42) -- (17.67,310.31) -- (18.25,194.42) -- (16.85,19) -- cycle ;
\draw  [draw opacity=0][fill={rgb, 255:red, 245; green, 166; blue, 35 }  ,fill opacity=0.3 ] (129.5,19) -- (131.27,196.71) -- (75.56,252.42) -- (75.56,19) -- cycle ;
\draw  [draw opacity=0][fill={rgb, 255:red, 245; green, 166; blue, 35 }  ,fill opacity=0.3 ] (75.28,310.31) -- (17.67,310.31) -- (74.19,252.42) -- cycle ;
\draw [color={rgb, 255:red, 144; green, 19; blue, 254 }  ,draw opacity=1 ][line width=0.75]    (298.34,310.31) -- (298.34,19) ;
\draw [color={rgb, 255:red, 144; green, 19; blue, 254 }  ,draw opacity=1 ][line width=0.75]    (224.34,310.31) -- (224.34,19) ;
\draw  [draw opacity=0][fill={rgb, 255:red, 126; green, 211; blue, 33 }  ,fill opacity=0.4 ] (261.5,65) -- (224,102) -- (224.34,310.31) -- (261.34,309.31) -- cycle ;
\draw  [draw opacity=0][fill={rgb, 255:red, 126; green, 211; blue, 33 }  ,fill opacity=0.4 ] (298.34,19) -- (261.34,19) -- (261.5,65) -- (298.5,28) -- cycle ;
\draw  [draw opacity=0][fill={rgb, 255:red, 74; green, 144; blue, 226 }  ,fill opacity=0.4 ] (130.27,196.71) -- (130.45,310.31) -- (75.28,310.31) -- (75.56,251.42) -- cycle ;
\draw  [draw opacity=0][fill={rgb, 255:red, 189; green, 16; blue, 224 }  ,fill opacity=0.4 ] (224.34,19) -- (261.34,19) -- (261.5,65) -- (224,102) -- cycle ;

\draw (3.83,14.32) node [anchor=north west][inner sep=0.75pt]  [color={rgb, 255:red, 74; green, 144; blue, 226 }  ,opacity=1 ]  {$L$};
\draw (367.85,314.72) node [anchor=north west][inner sep=0.75pt]  [color={rgb, 255:red, 208; green, 2; blue, 27 }  ,opacity=1 ]  {$r$};
\draw (67.97,315.91) node [anchor=north west][inner sep=0.75pt]    {$L_{k}$};
\draw (119.58,315.82) node [anchor=north west][inner sep=0.75pt]    {$L_{k-1}$};
\draw (256.84,316.21) node [anchor=north west][inner sep=0.75pt]  [color={rgb, 255:red, 0; green, 0; blue, 0 }  ,opacity=1 ]  {$L_{4}$};
\draw (169.37,216.8) node [anchor=north west][inner sep=0.75pt]  [color={rgb, 255:red, 65; green, 117; blue, 5 }  ,opacity=1 ]  {$\cdots \ $};
\draw (24.15,188.69) node [anchor=north west][inner sep=0.75pt]  [font=\tiny]  {$k^{2} -3k+1$};
\draw (82.05,159.2) node [anchor=north west][inner sep=0.75pt]  [font=\tiny]  {$k^{2} -5k+5$};
\draw (30.71,297.92) node [anchor=north west][inner sep=0.75pt]  [font=\tiny]  {$k^{2} -5k+5$};
\draw (169.18,61.02) node [anchor=north west][inner sep=0.75pt]  [color={rgb, 255:red, 65; green, 117; blue, 5 }  ,opacity=1 ]  {$\cdots \ $};
\draw (294.97,315.91) node [anchor=north west][inner sep=0.75pt]  [color={rgb, 255:red, 0; green, 0; blue, 0 }  ,opacity=1 ]  {$L_{3}$};
\draw (274.32,27.48) node [anchor=north west][inner sep=0.75pt]  [font=\tiny,rotate=-359.24]  {$1$};
\draw (164.97,316.4) node [anchor=north west][inner sep=0.75pt]  [color={rgb, 255:red, 0; green, 0; blue, 0 }  ,opacity=1 ]  {$\cdots $};
\draw (229.97,316.4) node [anchor=north west][inner sep=0.75pt]  [color={rgb, 255:red, 0; green, 0; blue, 0 }  ,opacity=1 ]  {$L_{5}$};
\draw (239.32,173.48) node [anchor=north west][inner sep=0.75pt]  [font=\tiny,rotate=-359.24]  {$1$};
\draw (277.32,173.48) node [anchor=north west][inner sep=0.75pt]  [font=\tiny,rotate=-359.24]  {$0$};
\draw (321.32,173.48) node [anchor=north west][inner sep=0.75pt]  [font=\tiny,rotate=-359.24]  {$0$};
\draw (100.05,274.2) node [anchor=north west][inner sep=0.75pt]  [font=\tiny] {$\ast$}; 
\draw (239.32,46.48) node [anchor=north west][inner sep=0.75pt]  [font=\tiny,rotate=-359.24] {$\ast$}; 

\end{tikzpicture}
 }
   \caption{The rank of $H_1((\mathsf{Star}_k)^2_{r,\mathbf{L}})$ for all $(r,L)$ in the shaded region. In particular, $\rank H_1((\mathsf{Star}_k)^2_{r,\mathbf{L}})=0$ when $(r,L)$ lies in the gray region. The entries labeled “$\ast$’’ represent values that depend on $k$ and are not determined in the previous theorems.}
    \label{fig:hpl-arrange-0'}
\end{figure}

\subsection{Decomposing $PH_1((\mathsf{Star}_k)^2_{-,-};\mathbb{F})$}\label{sec:4.2-decomp}
Now we compute the indecomposable direct summands for $PH_1((\mathsf{Star}_k)^2_{-,-};\mathbb{F})$ with the edge length $\mathbf{L}=(L,L_2,\dots, L_k)$ where $L_2,\dots, L_k$ are arbitrary but fixed positive real numbers. Without loss of generality, we assume $L_k\leq L_{k-1}\leq\cdots\leq L_2$. We need to find a basis for $PH_1((\mathsf{Star}_k)^2_{r,\mathbf{L}};\mathbb{F})$ for each $(r,L)$ such that the basis of $PH_1((\mathsf{Star}_k)^2_{r,\mathbf{L}};\mathbb{F})$ and the basis of $PH_1((\mathsf{Star}_k)^2_{r',\mathbf{L}'};\mathbb{F})$  (where $\mathbf{L}'=(L',L_2,\dots, L_k)$) are compatible for all $(r,L)\leq (r',L')$.

For each $L>0$ and $k\geq 4$, we construct the \key{reduced} $(G_k)_{\mathbf{L}}$ as follows:
\begin{itemize}
    \item Remove the vertex $xy$ for all $xy\in V$ such that $x,y\neq 0$, where $V$ is the set of vertices of $(G_k)_{\mathbf{L}}$.Denote the resulting vertex set by $V'$.
    \item Remove the edges $\{x0, xy\}$ and $\{xy,0y\}$ for all $x,y\neq 0$. Moreover, add a new edge $\{x0,0y\}$ for all $x\neq y$ and $x,y\neq 0$. This gives the incidence function $\psi'$. Let $E'$ denote the resulting edge set after deleting these edges and inserting the new ones.
    \item (filtering function on $V'$)  $f_{V'}=\restr{f_V}{V'}$
    \item (filtering function on $E'$) For all $\{x_1y_1, x_2y_2\}\in E'$, 
    $$f_E(\{x_1y_1, x_2y_2\})=\min\{f_{V'}(x_1y_1), f_{V'}(x_2y_2)\}$$
\end{itemize}

For example, when $k=4$, the reduced $(G_4)_{\mathbf{L}}$ is provided in Figure~\ref{fig:14}. A spanning tree of the reduced $(G_4)_{\mathbf{L}}$ is given in Figure~\ref{fig:15}.

\begin{minipage}[t]{0.5\textwidth}
   \centering
    \includegraphics[width=0.9\textwidth]{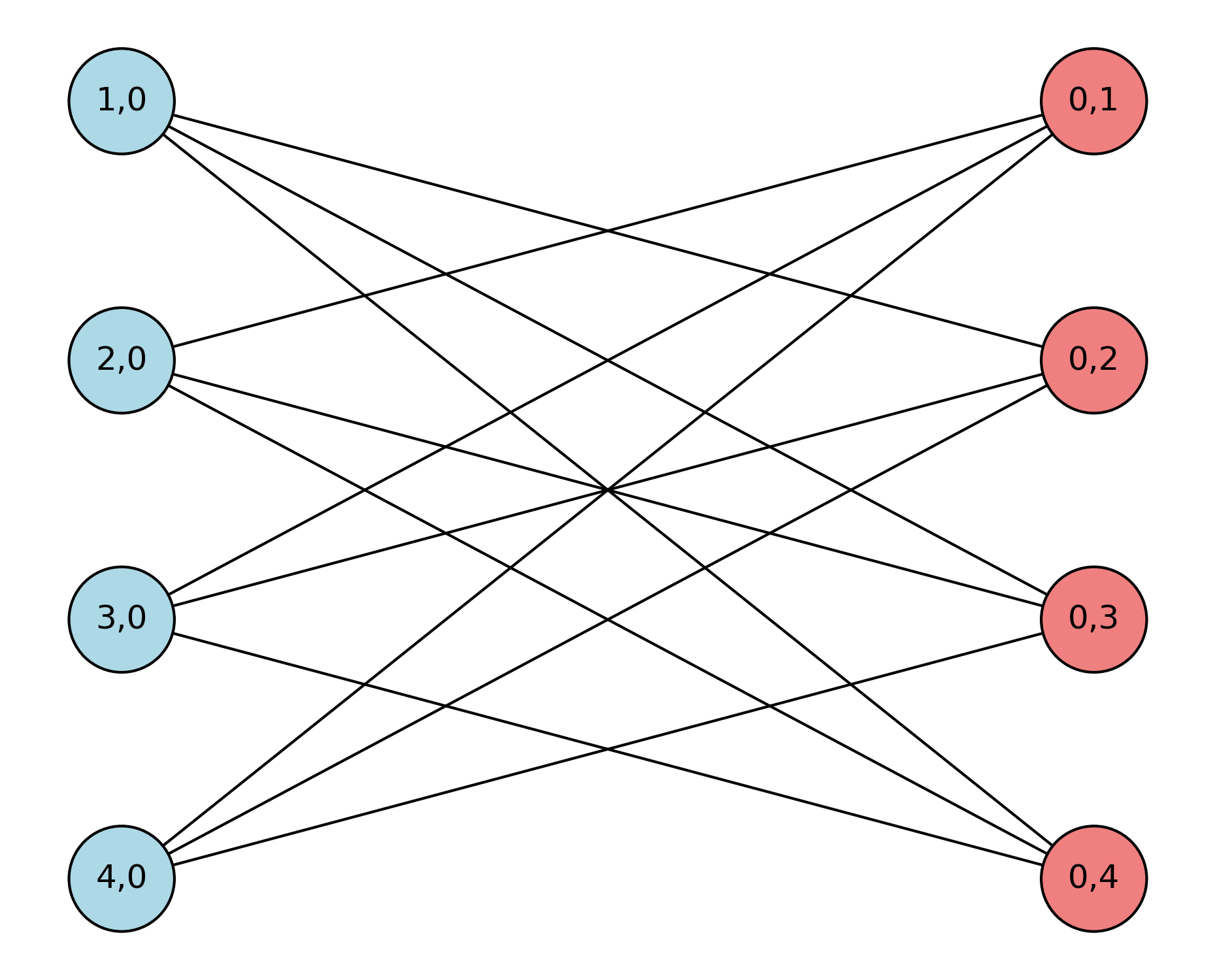}
    \captionof{figure}{The reduced $(G_4)_{\mathbf{L}}$.}
    \label{fig:14}
\end{minipage}%
\begin{minipage}[t]{0.5\textwidth}
    \centering
    \includegraphics[width=0.9\textwidth]{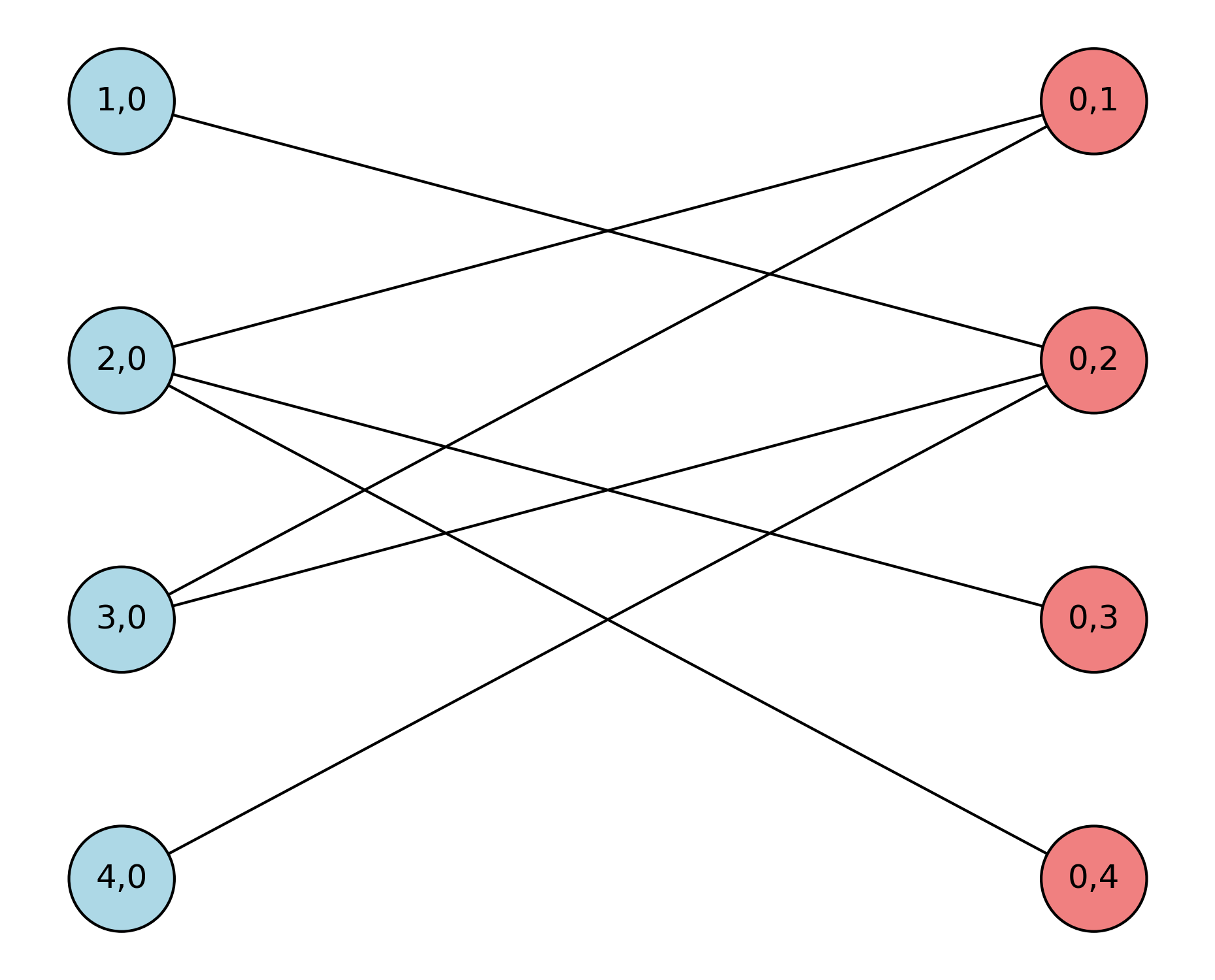}
    \captionof{figure}{A spanning tree of the reduced $(G_4)_{\mathbf{L}}$.}
    \label{fig:15}

 \end{minipage}

For each $r>0$, we set $$V'_r=f_{V'}^{-1}[r,+\infty)\mbox{\quad and \quad} E'_r=f_{E'}^{-1}[r,+\infty)$$ and define $\psi'_r: E'_r\rightarrow {V'_r\choose 2}$ by $\psi'_r(\{x_1y_1,x_2y_2\})=\psi'(\{x_1y_1,x_2y_2\})$ for all $\{x_1y_1,x_2y_2\}\in E'_r$. 

\begin{lemma}
$\psi'_r$ is an incidence function, and the resulting graph is a subgraph of the reduced $(G_k)_{\mathbf{L}}$.    
\end{lemma}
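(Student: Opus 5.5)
To prove the lemma, I verify the two conditions that make $(V'_r,E'_r,\psi'_r,\omega')$ a graph. First, $\psi'_r$ must take values in ${V'_r\choose 2}$. Second, the vertex and edge sets must sit inside those of the reduced $(G_k)_{\mathbf{L}}$ compatibly with $\psi'$. The second point is immediate: $V'_r\subseteq V'$ and $E'_r\subseteq E'$ by definition of preimages, and $\psi'_r$ is the restriction of $\psi'$. So the content of the lemma is the first point: for every edge $e\in E'_r$, both endpoints of $\psi'(e)$ lie in $V'_r$.

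To check this, I take $e=\{x_1y_1,x_2y_2\}\in E'_r$, so $f_{E'}(e)\geq r$. The filtering function on $E'$ is defined as $f_{E'}(e)=\min\{f_{V'}(x_1y_1),f_{V'}(x_2y_2)\}$. Hence $f_{V'}(x_1y_1)\geq f_{E'}(e)\geq r$ and $f_{V'}(x_2y_2)\geq r$, so both endpoints belong to $V'_r=f_{V'}^{-1}[r,\infty)$. Thus $\psi'_r(e)\in{V'_r\choose 2}$.

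The only care needed is that $\psi'(e)$ really is a 2-element subset of $V'$. This holds because the reduced construction only produces the following edges:
\begin{itemize}
\item edges surviving from $(G_k)_{\mathbf{L}}$ whose endpoints both avoid the removed vertices $xy$ with $x,y\neq0$, namely edges of the form $\{x0,x'0\}$ or $\{0y,0y'\}$ (the latter do not occur unless the centre is repeated, and such edges are excluded since $x\neq y$ within a vertex);
\item the new edges $\{x0,0y\}$ with $x\neq y$ and $x,y\neq 0$.
\end{itemize}
In each case the endpoints are distinct elements of $V'$. I expect a brief check here that no surviving original edge is incident to a deleted vertex, since the construction explicitly removes all edges $\{x0,xy\}$ and $\{xy,0y\}$, which are exactly the edges at $xy$.

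For the weights, I take $\omega'_r$ to be the restriction of $\omega'$ to $E'_r$. The inclusion $V'_r\hookrightarrow V'$ then preserves adjacency and weights, so it is a morphism in $\cat{wFinGraph}$ and the resulting graph is a weighted subgraph. There is no real obstacle; the key step is simply the min-definition of $f_{E'}$.
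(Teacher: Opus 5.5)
Your proof is correct and is essentially the paper's argument: both reduce the lemma to well-definedness of $\psi'_r$ and use $f_{E'}(\{x_1y_1,x_2y_2\})=\min\{f_{V'}(x_1y_1),f_{V'}(x_2y_2)\}\geq r$ to put both endpoints in $V'_r$. One small correction to your side check: no original edge of $(G_k)_{\mathbf{L}}$ survives the reduction, because every original edge has an endpoint $xy$ with $x,y\neq 0$; in particular, edges of the form $\{x0,x'0\}$ do not exist either, so $E'$ consists exactly of the new edges $\{x0,0y\}$, which is the only case the paper treats.
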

\begin{proof}
 It suffices to show that $\psi'_r$ is well-defined. For all $\{x0,0y\}\in E'_r$, note that $$r\leq f_{E'}(\{x0,0y\})=\min\{f_{V'}(x0), f_{V'}(0y)\}$$  
 Hence $f_{V'}(x0)\geq r$ and $f_{V'}(0y)\geq r$, which implies that $x0\in V'_r$ and $0y\in V'_r$. Therefore, $\psi'_r$ is well-defined.
\end{proof}

For each $r>0$ and $\mathbf{L}\in\mathbb{R}^k$, we 
define the \key{reduced $(G_k)_{r,\mathbf{L}}$}, denoted by $(G_k)'_{r,\mathbf{L}}$, to be
the subgraph of reduced $(G_k)_{\mathbf{L}}$  whose vertex set is $V'_r$ and whose edge set is $E'_r$, equipped with incidence function $\psi_r'$.

\begin{theorem}
    Given $k\geq 4$ and  $\mathbf{L}\in(\mathbb{R}_{>0})^k$, $H_1(\lVert(G_k)'_{r,\mathbf{L}}\rVert)\cong H_1(\lVert(G_k)_{r,\mathbf{L}}\rVert)$ for all $r>0$. 
\end{theorem}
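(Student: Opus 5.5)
The plan is to compare the two graphs vertex by vertex. I would show that $\lVert(G_k)_{r,\mathbf{L}}\rVert$ becomes homotopy equivalent to $\lVert(G_k)'_{r,\mathbf{L}}\rVert$ after three operations: discarding isolated vertices, collapsing pendant edges, and smoothing degree-two vertices. None of these operations changes $H_1$.

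First I would record the local structure of $(G_k)_{\mathbf{L}}$ around a vertex $xy$ with $x,y\neq 0$. By the adjacency rule, $xy$ is adjacent to $x_2y_2$ only if either $x_2=x$ and $y\cdot y_2=0$, forcing $y_2=0$, or $y_2=y$ and $x\cdot x_2=0$, forcing $x_2=0$. So $xy$ has exactly two neighbours, $x0$ and $0y$. The edge $\{x0,xy\}$ has $f_E=L_x$, the edge $\{xy,0y\}$ has $f_E=L_y$, and $f_V(xy)=L_x+L_y$.

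Next I would check that the graph spanned by the vertices $x0,0y$ contains no edges of its own. An edge $\{x_10,x_20\}$ would require $x_1x_2=0$, which is impossible. An edge $\{x0,0y\}$ would require either $x=0$ or $y=0$, which is also impossible. The only remaining edges, of the form $\{0y_1,0y_2\}$ and $\{x_10,x_20\}$, fail the adjacency rule as well. So every edge of $(G_k)_{\mathbf{L}}$ has exactly one endpoint of the form $xy$ with $x,y\neq0$, and $V'$ is exactly the set of vertices $x0$, $0y$. In particular $(G_k)_{r,\mathbf{L}}$ is the union of the discrete set $V'_r$ and the ``stars'' of the vertices $xy\in V_r$ with $x,y\neq 0$, where $f_V(x0)=L_x$ and $f_V(0y)=L_y$.

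Then, for a fixed pair $x\ne y$ with $x,y\neq0$ and $L_x+L_y\ge r$, I would split into cases.
\begin{enumerate}
\item If $L_x\ge r$ and $L_y\ge r$, both edges at $xy$ survive and $xy$ is a degree-two vertex on a path $x0$--$xy$--$0y$. Smoothing $xy$ replaces this path by a single edge $\{x0,0y\}$, which is precisely the edge of $E'_r$ with $f_{E'}=\min\{L_x,L_y\}\ge r$.
\item If exactly one of $L_x,L_y$ is $\ge r$, then $xy$ is a pendant vertex. Collapsing its edge is a deformation retraction, and correspondingly $\{x0,0y\}\notin E'_r$ because the minimum is $<r$.
\item If both are $<r$, then $xy$ is an isolated vertex. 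It only adds a path component and contributes nothing to $H_1$, and again $\{x0,0y\}\notin E'_r$.
\end{enumerate}
Performing these operations for all such pairs simultaneously is legitimate, because distinct vertices $xy$ have disjoint open stars. The result is a homotopy equivalence between $\lVert(G_k)_{r,\mathbf{L}}\rVert$ and $\lVert(G_k)'_{r,\mathbf{L}}\rVert\sqcup D$, where $D$ is a finite discrete set. This gives $H_1(\lVert(G_k)'_{r,\mathbf{L}}\rVert)\cong H_1(\lVert(G_k)_{r,\mathbf{L}}\rVert)$. Edge weights play no role, since only the homotopy type matters. As a cross-check one can verify the Euler--Poincaré count $b-a+x$ case by case, as in the proofs of Theorem~\ref{thm:4.4} and Theorem~\ref{thm:4.7}.

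The main obstacle is the bookkeeping in the first step. One must confirm that no other edges exist in $(G_k)_{\mathbf{L}}$, since otherwise smoothing could create extra cycles or miss some. One must also confirm that the vertex thresholds $f_V(x0)=L_x$ and $f_V(0y)=L_y$ agree with the endpoint conditions built into $f_{E'}$. I would check both directly from the adjacency and filtering definitions before doing the case analysis.
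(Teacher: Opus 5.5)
Your proof is correct, but it takes a different route from the paper.

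The paper never constructs a map between the two graphs. It lets $a_0,a_1,a_2$ be the numbers of vertices $xy$ (with $x,y\neq 0$) of degree $0,1,2$ in $(G_k)_{r,\mathbf{L}}$. It then records three counts:
\begin{itemize}
\item $\lvert V'_r\rvert=a-a_0-a_1-a_2$;
\item $\lvert E'_r\rvert=b-a_1-a_2$;
\item the number of path components drops by $a_0$.
\end{itemize}
By the Euler--Poincar\'e formula both first homology groups have rank $b-a+x$. Since $H_1$ of a finite graph is free, they are abstractly isomorphic.

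You replace this count by explicit graph moves: smoothing the degree-two vertices $xy$, collapsing pendant edges, and discarding isolated vertices. This yields a homotopy equivalence $\lVert(G_k)_{r,\mathbf{L}}\rVert\simeq\lVert(G_k)'_{r,\mathbf{L}}\rVert\sqcup D$. Your first step shows that each $xy$ with $x,y\neq0$ has exactly the neighbours $x0$ and $0y$, and that no edges join two vertices of $V'$. That is exactly the structural input the paper's formulas for $\lvert E'_r\rvert$ and the component count use without stating it, so your version makes it explicit.

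The counting argument is shorter. However, it gives only an isomorphism between abstract groups of equal rank, with no compatibility as $r$ and $\mathbf{L}$ vary. Your construction gives more. The map sending the edge $\{x0,0y\}$ onto the path $x0$--$xy$--$0y$ is compatible with the inclusions $(G_k)_{r,\mathbf{L}}\subseteq(G_k)_{r',\mathbf{L}'}$ for $r'\le r$ and $\mathbf{L}\le\mathbf{L}'$. It therefore upgrades the statement to an isomorphism of persistence modules. That stronger form is what the paper actually relies on later, when it uses fundamental cycles of the reduced graphs as compatible bases in the proof of Theorem~\ref{thm:5.8}.
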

\begin{proof}
  Assume $(G_k)_{r,\mathbf{L}}$ has $a$ vertices, $b$ edges, and $x$ path components. In particular, $(G_k)_{r,\mathbf{L}}$ has $a_i$ vertices of the form $xy$ where $x,y\neq 0$ with degree $i$, where $i=0,1,2$. For all vertices $x0\in V_r$ or $0y\in V_r$, note that $f_{V'}=\restr{f_V}{V'}$, therefore, $x0\in V'_r$ or $0y\in V'_r$. Hence $$\lvert V'_r\rvert=a-a_0-a_1-a_2$$
  Moreover, since $f_{E'}(\{x_1y_1, x_2y_2\})=\min\{f_{V'}(x_1y_1), f_{V'}(x_2y_2)\}$ for all $\{x_1y_1, x_2y_2\}\in E'$, $$\lvert E'_r\rvert=b-a_1-2a_2+a_2=b-a_1-a_2$$
  and the number of path components of $(G_k)_{r,\mathbf{L}}$ is $x-a_0$. By the Euler-Poincaré formula, 
  \begin{equation*}
      \begin{aligned}
         \rank(H_1(\lVert(G_k)'_{r,\mathbf{L}}\rVert))&=(b-a_1-a_2)-(a-a_0-a_1-a_2)+(x-a_0)\\
         &=b-a+x\\
         &=\rank(H_1(\lVert(G_k)_{r,\mathbf{L}}\rVert))
      \end{aligned}
\end{equation*}

Note that $\lVert(G_k)_{r,\mathbf{L}}\rVert$ and $\lVert(G_k)'_{r,\mathbf{L}}\rVert$ are finite simplicial complexes that are at most one-dimensional, hence both $H_1(\lVert(G_k)_{r,\mathbf{L}}\rVert)$ and $H_1(\lVert(G_k)'_{r,\mathbf{L}}\rVert)$ are finitely generated free abelian groups. Therefore, $$H_1(\lVert(G_k)_{r,\mathbf{L}}\rVert)\cong H_1(\lVert(G_k)'_{r,\mathbf{L}}\rVert)$$
\end{proof}

Notice that for all $r,L>0$,  $\lVert(G_k)_{r,\mathbf{L}}\rVert$ (where $\mathbf{L}=(L,L_2,\dots,L_k)$) is at most a 1-dimensional CW complex, therefore, the set consists of its fundamental cycles forms a basis for $H_1(\lVert(G_k)_{r,\mathbf{L}}\rVert;\mathbb{F})$. A standard method for enumerating fundamental cycles is to select a spanning tree of the reduced $(G_k)_{r,\mathbf{L}}$ and to obtain the fundamental cycles generated by the remaining edges: For each edge $e$ of the reduced $(G_k)_{r,\mathbf{L}}$ that is not contained in the spanning tree, there exists a unique path in the spanning tree connecting the two vertices incident to $e$. Adding $e$ to the path, we obtain a fundamental cycle of the reduced $(G_k)_{r,\mathbf{L}}$. In our context, however, we seek spanning trees for each reduced  $(G_k)_{r,\mathbf{L}}$ whose associated fundamental cycles give a basis for $H_1(\lVert(G_k)_{r,\mathbf{L}}\rVert;\mathbb{F})$ that is compatible across all parameters $r,L>0$.

Consider $k=4$. Assume $r\leq L$ and $r\leq L_4$. Since there are five edges in the reduced $(G_4)_{r,\mathbf{L}}$ that are not contained in the spanning tree, there are five fundamental cycles in the reduced $(G_4)_{r,\mathbf{L}}$, which are provided in Figure~\ref{fig:16}.

\begin{figure}[htbp!]
    \centering
    \begin{subfigure}[t]{0.33\textwidth}
        \centering
        \includegraphics[height=\textwidth]{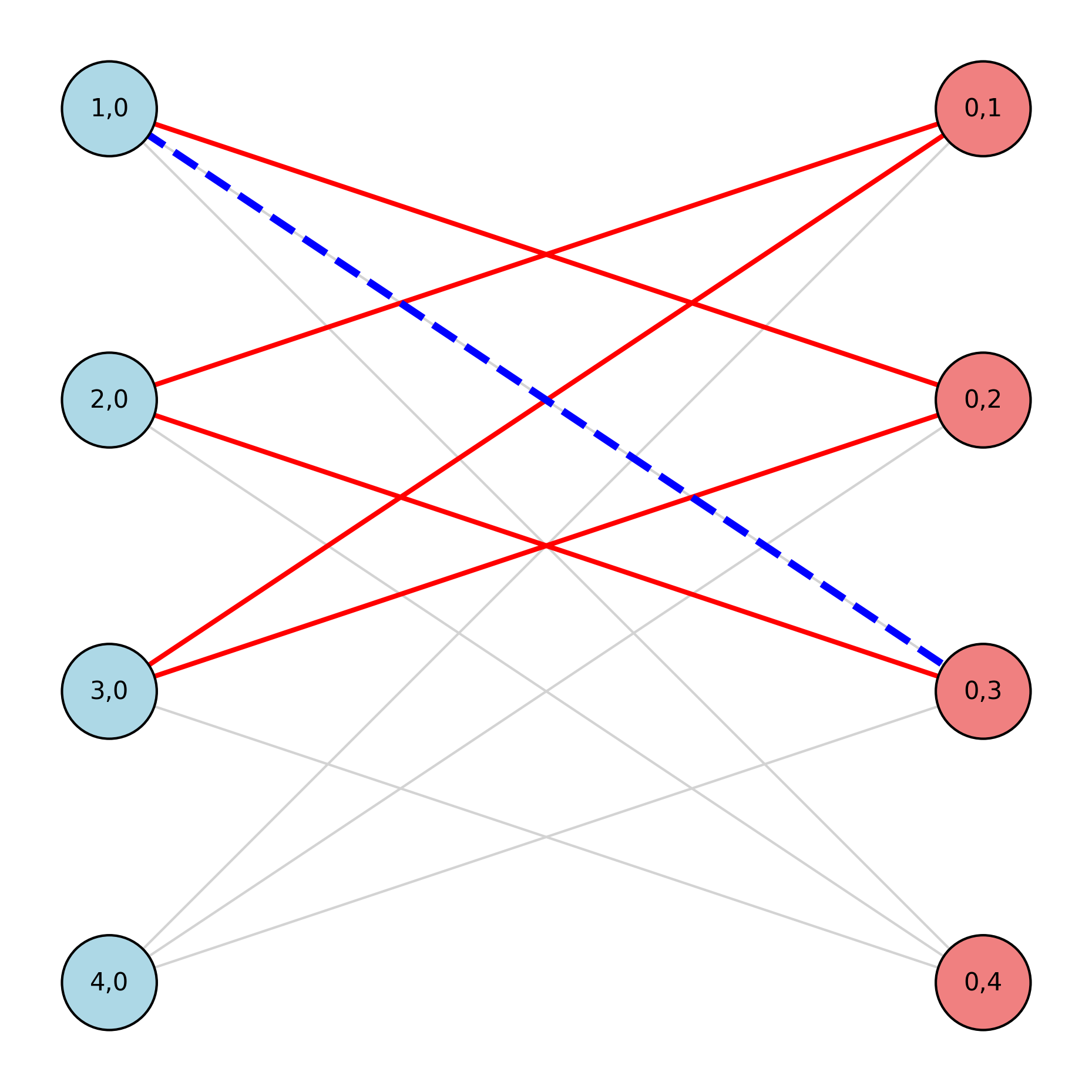}
        \caption{$X_1$}
    \end{subfigure}%
     ~ 
    \begin{subfigure}[t]{0.33\textwidth}
        \centering
        \includegraphics[height=\textwidth]{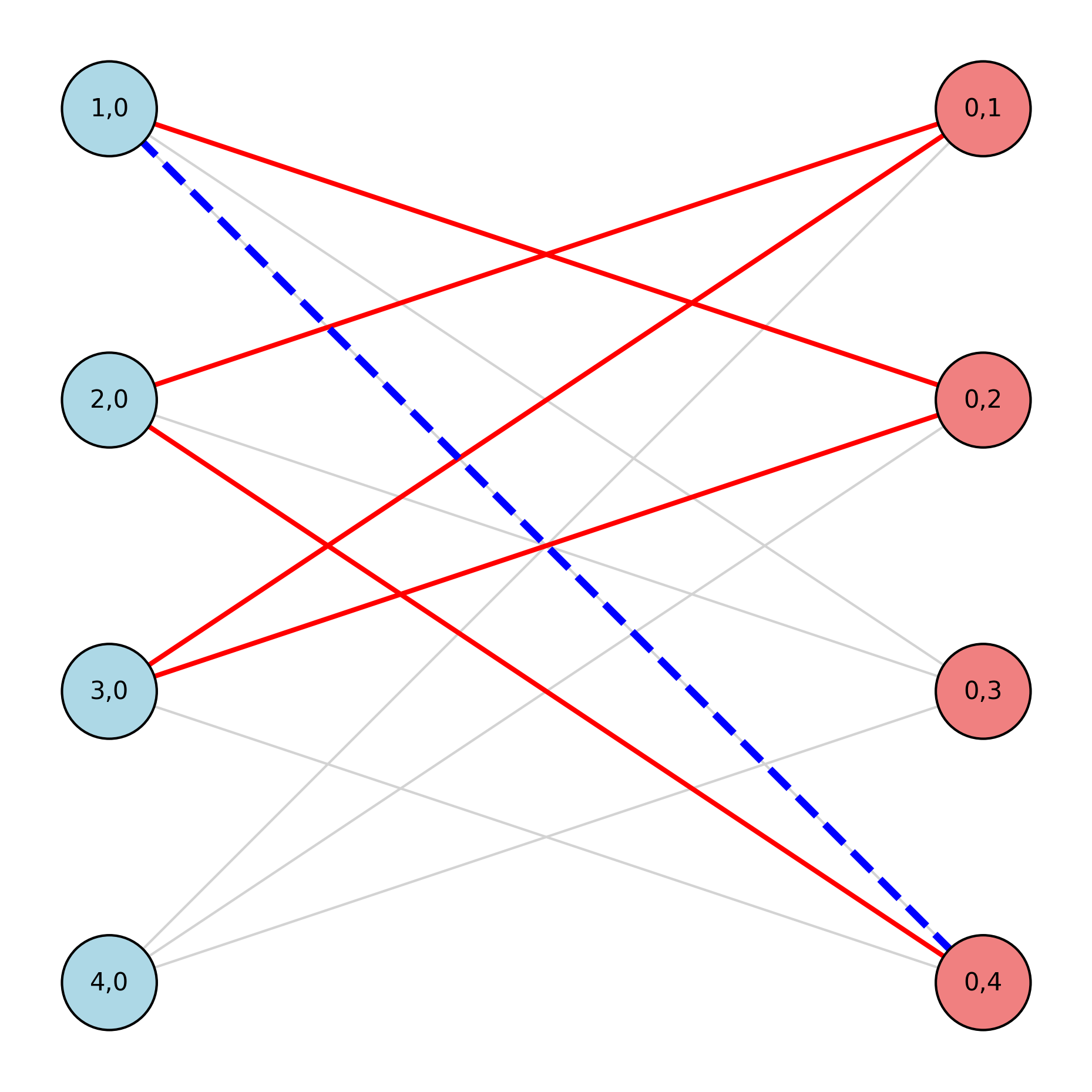}
        \caption{$X_2$}
    \end{subfigure}%
     ~ 
    \begin{subfigure}[t]{0.33\textwidth}
        \centering
        \includegraphics[height=\textwidth]{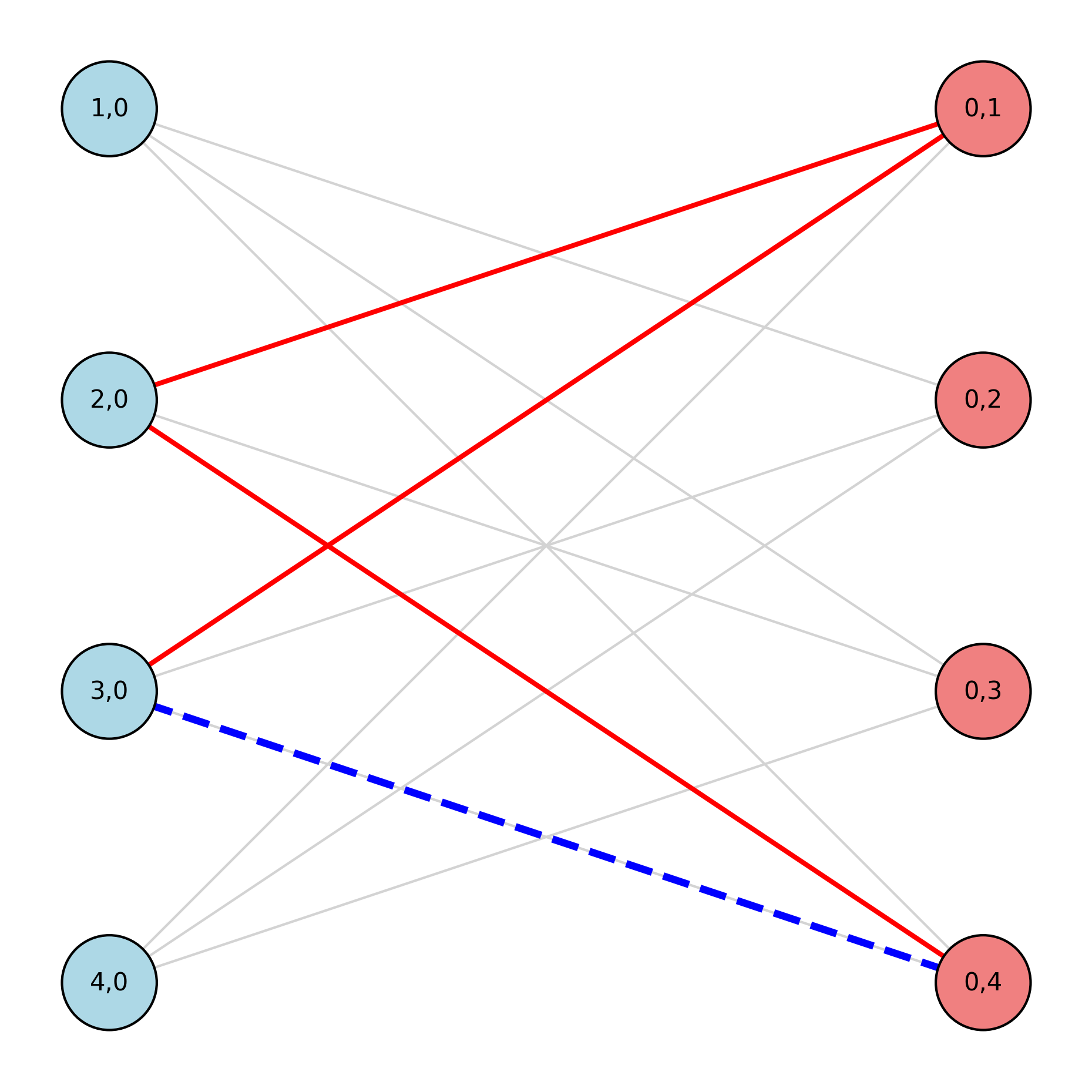}
        \caption{$X_3$}
    \end{subfigure}
    
    \begin{subfigure}[t]{0.33\textwidth}
        \centering
        \includegraphics[height=\textwidth]{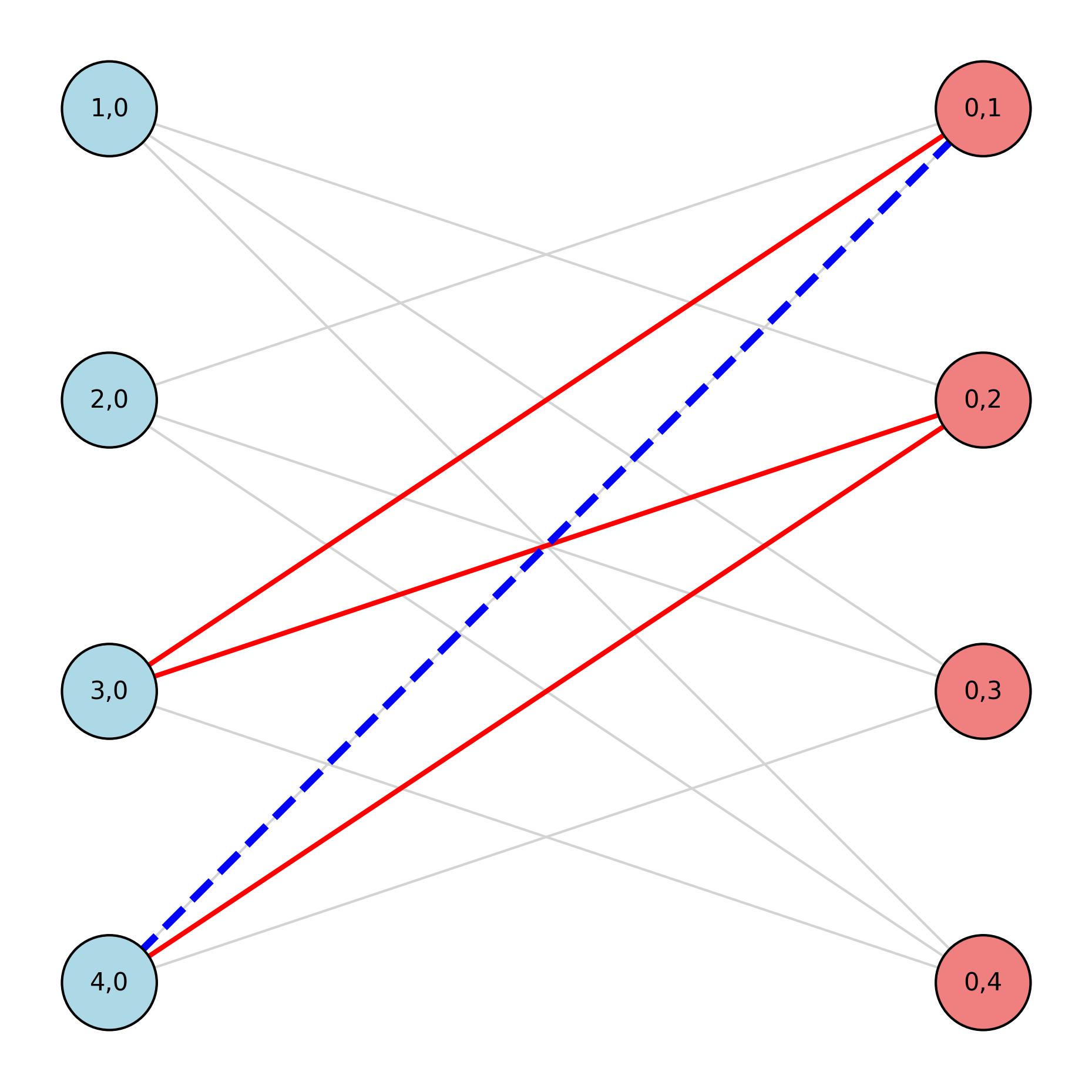}
        \caption{$X_4$}
    \end{subfigure}%
    \begin{subfigure}[t]{0.33\textwidth}
        \centering
        \includegraphics[height=\textwidth]{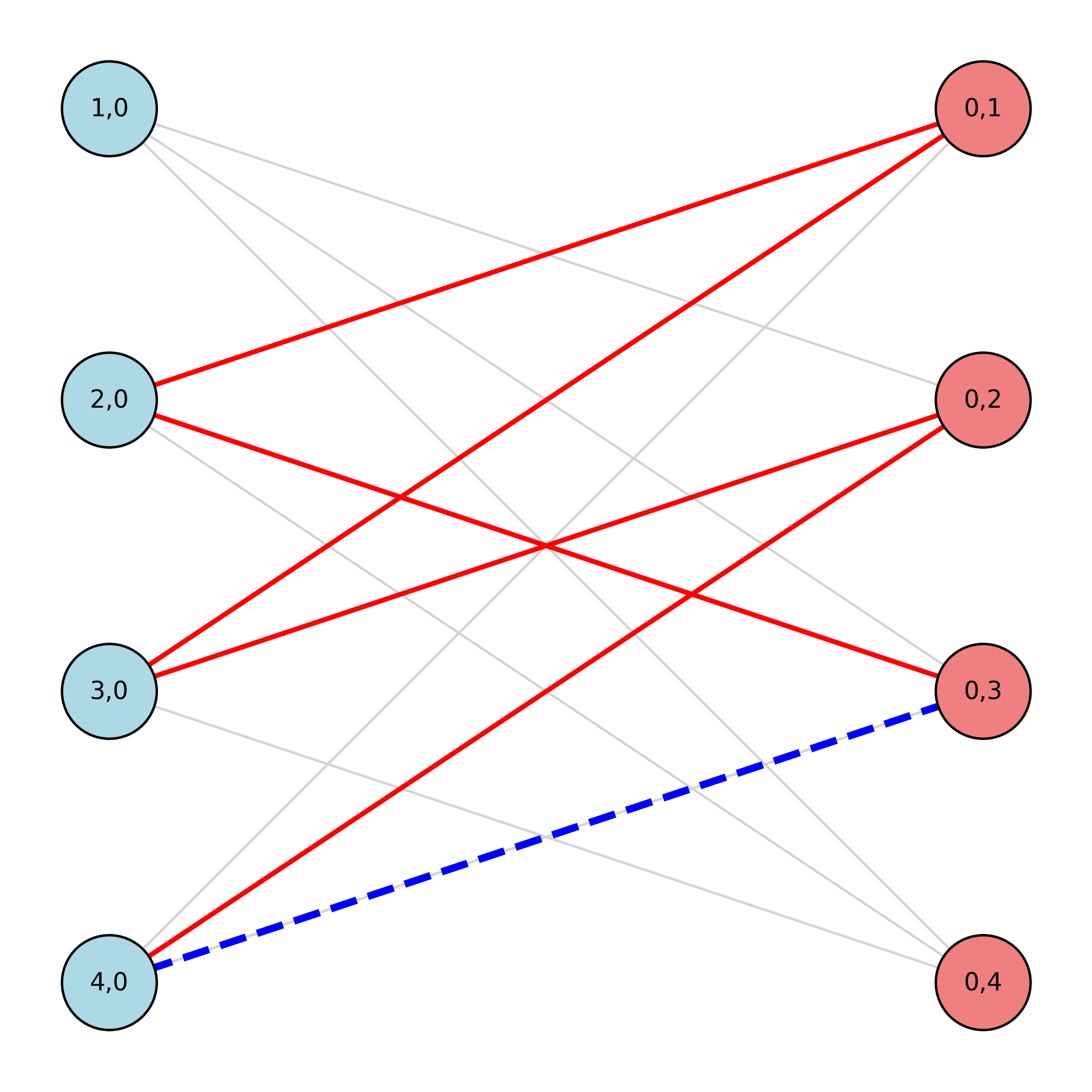}
        \caption{$X_5$}
    \end{subfigure}%
    \caption{Fundamental cycles of the reduced $(G_4)_{r,\mathbf{L}}$ when $r<L$ and $r<L_4$.}
    \label{fig:16}
\end{figure}

When $r\leq L$ and $L_4<r\leq L_3$, the fundamental cycles of the reduced $(G_4)_{r,\mathbf{L}}$ is given in Figure~\ref{fig:17}. It is the fundamental cycle $X_1$. When $L<r\leq L+L_4$ and $r\leq L_4$, the fundamental cycles (denoted by $Y$) of the reduced $(G_4)_{r,\mathbf{L}}$ is given in Figure~\ref{fig:18}, which can be obtained by taking the symmetric difference of $X_3$ and $X_5$.

\begin{minipage}[t]{0.5\textwidth}
   \centering
    \includegraphics[width=0.65\textwidth]{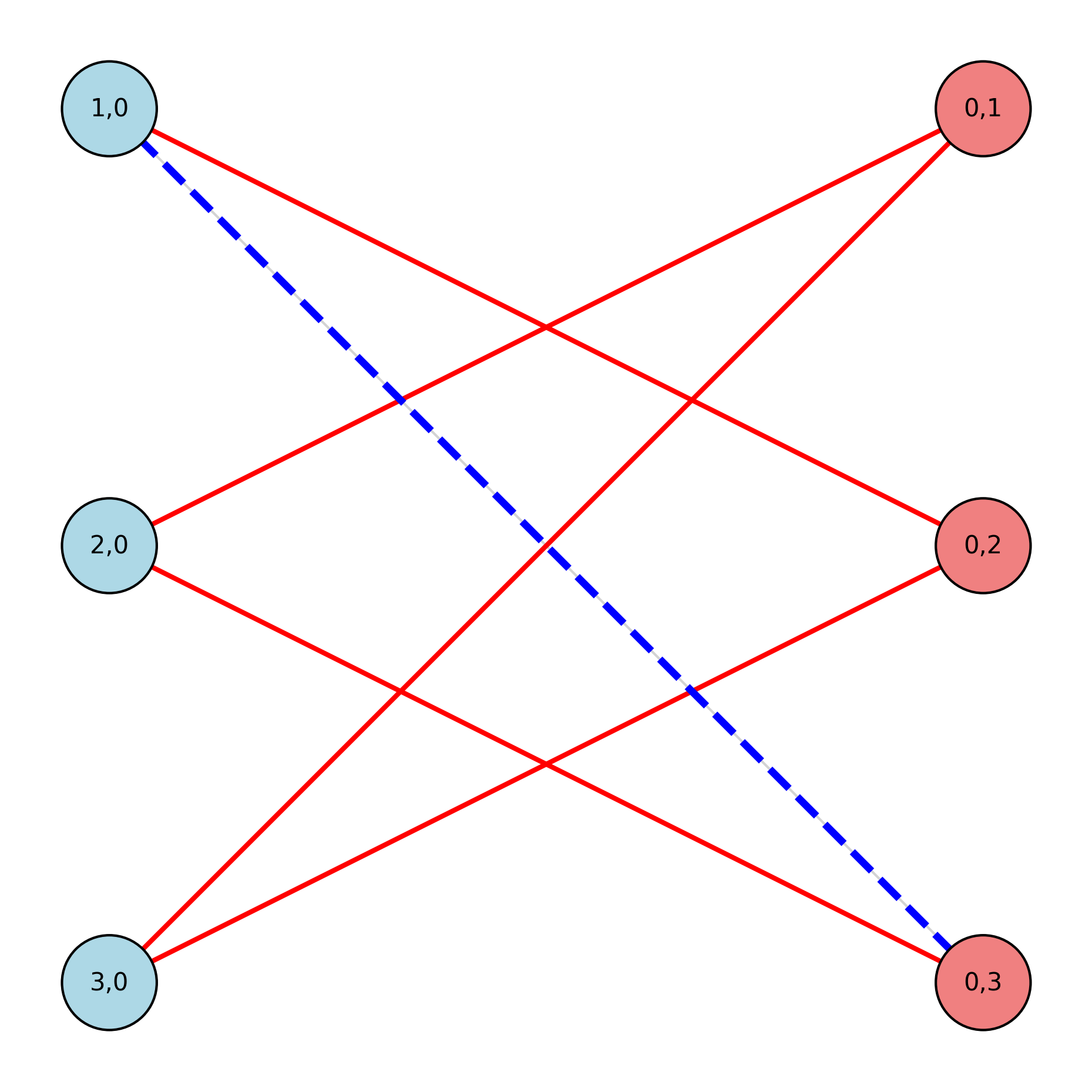}
    \captionof{figure}{The fundamental cycle of the reduced $(G_4)_{r,\mathbf{L}}$ when $r\leq L$ and $L_4<r\leq L_3$.}
    \label{fig:17}
\end{minipage}%
\begin{minipage}[t]{0.5\textwidth}
    \centering
    \includegraphics[width=0.65\textwidth]{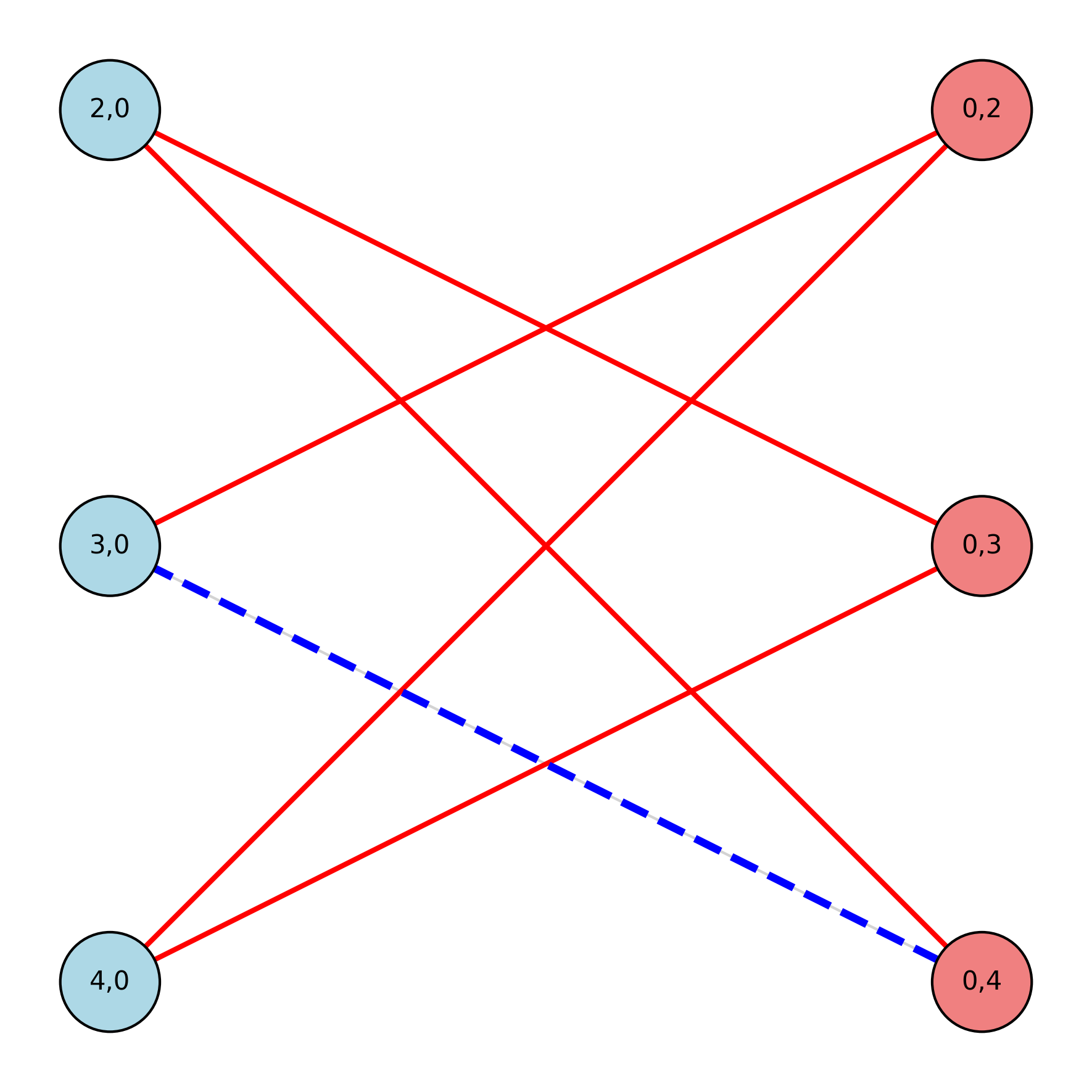}
    \captionof{figure}{The fundamental cycle of the reduced $(G_4)_{r,\mathbf{L}}$ when $L<r\leq L+L_4$ and $r\leq L_4$.}
    \label{fig:18}

 \end{minipage}

\vspace{0.2cm}

This combinatorial analysis of the fundamental cycles of the reduced $(G_4)_{r,\mathbf{L}}$ provides the data needed to track the behavior of $PH_1((\mathsf{Star}_4)^2_{r,\mathbf{L}};\mathbb{F})$ as parameters $r$ and $L$ vary. In particular, it enables an explicit computation of the indecomposable summands of $PH_1((\mathsf{Star}_4)^2_{-,-};\mathbb{F})$.

\begin{theorem}\label{thm:5.8}
    Let $\mathbf{L}=(L,L_2,L_3,L_4)$ where $L_4\leq L_3\leq L_2$ are fixed positive real numbers. The persistence module $PH_1((\mathsf{Star}_4)^2_{-,-};\mathbb{F})$ is interval-indecomposable and its indecomposable direct summands are the interval modules whose supports are given by the shaded regions in Figure~\ref{fig:19}.
    \begin{figure}[htbp!]
    \centering
    \resizebox{\textwidth}{!}{

\tikzset{every picture/.style={line width=0.75pt}} 

\begin{tikzpicture}[x=0.75pt,y=0.75pt,yscale=-1,xscale=1]

\draw    (2,223) -- (212.5,223) ;
\draw [shift={(214.5,223)}, rotate = 180] [color={rgb, 255:red, 0; green, 0; blue, 0 }  ][line width=0.75]    (10.93,-3.29) .. controls (6.95,-1.4) and (3.31,-0.3) .. (0,0) .. controls (3.31,0.3) and (6.95,1.4) .. (10.93,3.29)   ;
\draw    (22,243) -- (22,16) ;
\draw [shift={(22,14)}, rotate = 90] [color={rgb, 255:red, 0; green, 0; blue, 0 }  ][line width=0.75]    (10.93,-3.29) .. controls (6.95,-1.4) and (3.31,-0.3) .. (0,0) .. controls (3.31,0.3) and (6.95,1.4) .. (10.93,3.29)   ;
\draw [color={rgb, 255:red, 144; green, 19; blue, 254 }  ,draw opacity=0.4 ][line width=0.75]    (21.5,223) -- (214.5,30) ;
\draw [color={rgb, 255:red, 144; green, 19; blue, 254 }  ,draw opacity=0.4 ][line width=0.75]    (60.5,223) -- (213.5,70) ;
\draw [color={rgb, 255:red, 144; green, 19; blue, 254 }  ,draw opacity=0.4 ][line width=0.75]    (60.5,223) -- (60.5,24) ;
\draw [color={rgb, 255:red, 144; green, 19; blue, 254 }  ,draw opacity=0.4 ][line width=0.75]    (126,223) -- (126,24) ;
\draw [color={rgb, 255:red, 144; green, 19; blue, 254 }  ,draw opacity=0.4 ][line width=0.75]    (94,223) -- (94,24) ;
\draw [color={rgb, 255:red, 144; green, 19; blue, 254 }  ,draw opacity=0.4 ][line width=0.75]    (94,223) -- (213.5,103.5) ;
\draw [color={rgb, 255:red, 144; green, 19; blue, 254 }  ,draw opacity=0.4 ][line width=0.75]    (126,223) -- (214,135) ;
\draw    (223,223) -- (433.5,223) ;
\draw [shift={(435.5,223)}, rotate = 180] [color={rgb, 255:red, 0; green, 0; blue, 0 }  ][line width=0.75]    (10.93,-3.29) .. controls (6.95,-1.4) and (3.31,-0.3) .. (0,0) .. controls (3.31,0.3) and (6.95,1.4) .. (10.93,3.29)   ;
\draw    (243,243) -- (243,16) ;
\draw [shift={(243,14)}, rotate = 90] [color={rgb, 255:red, 0; green, 0; blue, 0 }  ][line width=0.75]    (10.93,-3.29) .. controls (6.95,-1.4) and (3.31,-0.3) .. (0,0) .. controls (3.31,0.3) and (6.95,1.4) .. (10.93,3.29)   ;
\draw [color={rgb, 255:red, 144; green, 19; blue, 254 }  ,draw opacity=0.4 ][line width=0.75]    (242.5,223) -- (435.5,30) ;
\draw [color={rgb, 255:red, 144; green, 19; blue, 254 }  ,draw opacity=0.4 ][line width=0.75]    (281.5,223) -- (434.5,70) ;
\draw [color={rgb, 255:red, 144; green, 19; blue, 254 }  ,draw opacity=0.4 ][line width=0.75]    (281.5,223) -- (281.5,24) ;
\draw [color={rgb, 255:red, 144; green, 19; blue, 254 }  ,draw opacity=0.4 ][line width=0.75]    (347,223) -- (347,24) ;
\draw [color={rgb, 255:red, 144; green, 19; blue, 254 }  ,draw opacity=0.4 ][line width=0.75]    (315,223) -- (315,24) ;
\draw [color={rgb, 255:red, 144; green, 19; blue, 254 }  ,draw opacity=0.4 ][line width=0.75]    (315,223) -- (434.5,103.5) ;
\draw [color={rgb, 255:red, 144; green, 19; blue, 254 }  ,draw opacity=0.4 ][line width=0.75]    (347,223) -- (435,135) ;
\draw    (440,223) -- (650.5,223) ;
\draw [shift={(652.5,223)}, rotate = 180] [color={rgb, 255:red, 0; green, 0; blue, 0 }  ][line width=0.75]    (10.93,-3.29) .. controls (6.95,-1.4) and (3.31,-0.3) .. (0,0) .. controls (3.31,0.3) and (6.95,1.4) .. (10.93,3.29)   ;
\draw    (460,243) -- (460,16) ;
\draw [shift={(460,14)}, rotate = 90] [color={rgb, 255:red, 0; green, 0; blue, 0 }  ][line width=0.75]    (10.93,-3.29) .. controls (6.95,-1.4) and (3.31,-0.3) .. (0,0) .. controls (3.31,0.3) and (6.95,1.4) .. (10.93,3.29)   ;
\draw [color={rgb, 255:red, 144; green, 19; blue, 254 }  ,draw opacity=0.4 ][line width=0.75]    (459.5,223) -- (652.5,30) ;
\draw [color={rgb, 255:red, 144; green, 19; blue, 254 }  ,draw opacity=0.4 ][line width=0.75]    (498.5,223) -- (651.5,70) ;
\draw [color={rgb, 255:red, 144; green, 19; blue, 254 }  ,draw opacity=0.4 ][line width=0.75]    (498.5,223) -- (498.5,24) ;
\draw [color={rgb, 255:red, 144; green, 19; blue, 254 }  ,draw opacity=0.4 ][line width=0.75]    (564,223) -- (564,24) ;
\draw [color={rgb, 255:red, 144; green, 19; blue, 254 }  ,draw opacity=0.4 ][line width=0.75]    (532,223) -- (532,24) ;
\draw [color={rgb, 255:red, 144; green, 19; blue, 254 }  ,draw opacity=0.4 ][line width=0.75]    (532,223) -- (651.5,103.5) ;
\draw [color={rgb, 255:red, 144; green, 19; blue, 254 }  ,draw opacity=0.4 ][line width=0.75]    (564,223) -- (652,135) ;
\draw  [draw opacity=0][fill={rgb, 255:red, 245; green, 166; blue, 35 }  ,fill opacity=0.4 ] (60.5,24) -- (60.5,24) -- (60.5,223) -- (21.5,223) -- (21.5,24) -- cycle ;
\draw  [draw opacity=0][fill={rgb, 255:red, 184; green, 233; blue, 134 }  ,fill opacity=0.4 ] (315,24) -- (314.5,152) -- (242.5,223) -- (243.5,23) -- cycle ;
\draw  [draw opacity=0][fill={rgb, 255:red, 189; green, 16; blue, 224 }  ,fill opacity=0.4 ] (498.5,24) -- (498.5,24) -- (498.5,184) -- (459.5,223) -- (459.5,24) -- cycle ;

\draw (5,5.4) node [anchor=north west][inner sep=0.75pt]  [color={rgb, 255:red, 74; green, 144; blue, 226 }  ,opacity=1 ]  {$L$};
\draw (216.5,226.4) node [anchor=north west][inner sep=0.75pt]  [color={rgb, 255:red, 208; green, 2; blue, 27 }  ,opacity=1 ]  {$r$};
\draw (49,227.4) node [anchor=north west][inner sep=0.75pt]    {$L_{4}$};
\draw (85,227.4) node [anchor=north west][inner sep=0.75pt]    {$L_{3}$};
\draw (119,227.4) node [anchor=north west][inner sep=0.75pt]    {$L_{2}$};
\draw (226,5.4) node [anchor=north west][inner sep=0.75pt]  [color={rgb, 255:red, 74; green, 144; blue, 226 }  ,opacity=1 ]  {$L$};
\draw (449.5,226.4) node [anchor=north west][inner sep=0.75pt]  [color={rgb, 255:red, 208; green, 2; blue, 27 }  ,opacity=1 ]  {$r$};
\draw (270,227.4) node [anchor=north west][inner sep=0.75pt]    {$L_{4}$};
\draw (306,227.4) node [anchor=north west][inner sep=0.75pt]    {$L_{3}$};
\draw (340,227.4) node [anchor=north west][inner sep=0.75pt]    {$L_{2}$};
\draw (443,5.4) node [anchor=north west][inner sep=0.75pt]  [color={rgb, 255:red, 74; green, 144; blue, 226 }  ,opacity=1 ]  {$L$};
\draw (654.5,226.4) node [anchor=north west][inner sep=0.75pt]  [color={rgb, 255:red, 208; green, 2; blue, 27 }  ,opacity=1 ]  {$r$};
\draw (487,227.4) node [anchor=north west][inner sep=0.75pt]    {$L_{4}$};
\draw (523,227.4) node [anchor=north west][inner sep=0.75pt]    {$L_{3}$};
\draw (557,227.4) node [anchor=north west][inner sep=0.75pt]    {$L_{2}$};
\draw (57,252) node [anchor=north west][inner sep=0.75pt]   [align=left] {Multiplicity=1};
\draw (278,252) node [anchor=north west][inner sep=0.75pt]   [align=left] {Multiplicity=1};
\draw (496,252) node [anchor=north west][inner sep=0.75pt]   [align=left] {Multiplicity=3};

\end{tikzpicture}
    }
    \caption{The
    indecomposable direct summands of $PH_1((\mathsf{Star}_4)^2_{-,-};\mathbb{F})$.}
    \label{fig:19}
\end{figure}
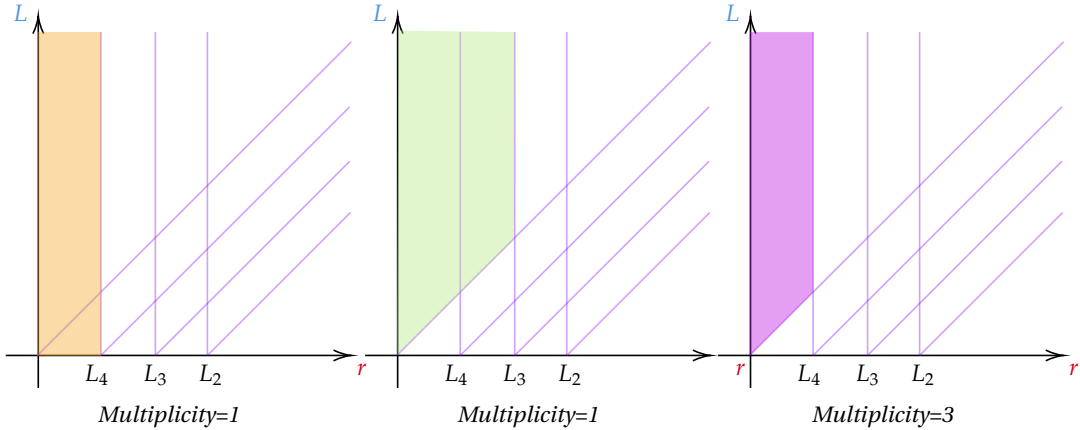
\end{theorem}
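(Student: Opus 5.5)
The plan is to work entirely in the combinatorial model. By Theorem~\ref{thm:2.8} and the theorem identifying $H_1$ of $(G_4)'_{r,\mathbf{L}}$ with that of $(G_4)_{r,\mathbf{L}}$, it suffices to decompose the $2$-parameter module $(r,L)\mapsto H_1(\lVert (G_4)'_{r,\mathbf{L}}\rVert;\mathbb{F})$. Its structure maps are induced by inclusions of graphs, hence send a cycle to the same cycle, and they are injective by Corollary~\ref{Coro:2:11}. So I would exhibit five cycles $Z_1,\dots,Z_5$ in the reduced $(G_4)_{\mathbf{L}}$ with three properties:
\begin{enumerate}
\item at every $(r,L)$, the $Z_j$ present in $(G_4)'_{r,\mathbf{L}}$ form a basis of $H_1$;
\item each $Z_j$ is present exactly on a set $S_j$ shown shaded in Figure~\ref{fig:19};
\item each $S_j$ is an interval.
\end{enumerate}
Then $M\cong\bigoplus_j \mathbb{F}S_j$. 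The isomorphism sends the generator of $\mathbb{F}S_j$ at $(r,L)$ to $[Z_j]$, and it commutes with the structure maps because the maps are induced by inclusions.

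The concrete choice of cycles is $Z_1=X_1$, $Z_2=X_2$, $Z_3=X_3$, $Z_4=X_4$ and $Z_5=Y=X_3+X_5$, the symmetric difference. A cycle lies in $(G_4)'_{r,\mathbf{L}}$ if and only if $r\le\min_{e}f_{E'}(e)$ over its edges $e$, and each $f_{E'}(e)$ is a minimum of vertex values $L_x+L_y$. Each presence region is therefore of the form $\{(r,L): r\le \min(\text{finitely many } L_i,\ L+L_i,\ L)\}$. I will read these minima off Figures~\ref{fig:16}--\ref{fig:18}, aiming for:
\begin{itemize}
\item $X_1$: the region $\{r\le L_3,\ r\le L\}$;
\item $X_2,X_3,X_4$: the region $\{r\le L_4,\ r\le L\}$, giving multiplicity $3$;
\item $Y$: the region $\{r\le L_4\}$, since the edges of $Y$ avoid the vertices $10$ and $01$ whose value is $L$, and the thresholds $L+L_i$ exceed $L_4$.
\end{itemize}
Each region is a down-set in $r$ and an up-set in $L$, so it is convex and connected for the order $(\mathbb{R}_{>0},\le)\op\times(\mathbb{R}_{>0},\le)$, hence an interval.

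For the spanning condition, I count at each $(r,L)$ how many of the $Z_j$ are present and compare with the ranks already computed:
\begin{itemize}
\item for $r\le L$ and $r\le L_4$: $5=k^2-3k+1$ (Theorem~\ref{thm:4:3}(1));
\item for $L<r\le L_4$: $1=k^2-5k+5$ (Theorem~\ref{thm:4:3}(3));
\item for $L_4<r\le L_3$ and $r\le L$: $1$ (Theorem~\ref{thm:4.5});
\item everywhere else: $0$ (Theorem~\ref{cor:4.4} and Theorem~\ref{thm:4:3}(4)).
\end{itemize}
Since the vertical walls $r=L_i+L_j$ and the walls $r=L+L_i$ do not change the rank (Theorems~\ref{thm:4.4}, \ref{thm:4.6-1} and~\ref{thm:4.7}), it suffices to check linear independence of the present cycles in one representative chamber of each region. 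There I use that distinct fundamental cycles with respect to a spanning tree are independent, and that $\{X_1,X_2,X_3,X_4,X_3+X_5\}$ is obtained from the fundamental basis by an invertible change of basis.

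The main obstacle I expect is the edge-by-edge verification of the presence regions, in particular for $Y$ and for $X_3$ versus $X_5$. I must confirm that $Y=X_3+X_5$ survives for all $L$ once $r\le L_4$, while $X_3$ (equivalently $X_5$) dies at $r=L$. The choice of $X_3$ rather than $X_5$ as the third member of the multiplicity-$3$ summand must make both $X_3$ and $Y$ have exactly the claimed supports. If $X_3$ turns out to have support $\{r\le L_4\}$ itself, I would instead swap the roles and use $X_5$, or $X_3+X_5$ together with whichever of $X_3,X_5$ passes through $10$ or $01$. Any mistake here would break the direct-sum compatibility across the wall $r=L$.
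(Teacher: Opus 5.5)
Your argument is correct and gives the same decomposition, but it is organized differently from the paper's proof.

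\textbf{How the two routes differ.} The paper passes to the poset of chambers via Theorem~\ref{thm-sim-1'}. It identifies the induced representation as $\mathbb{F}\to\mathbb{F}^5\leftarrow\mathbb{F}$ of type $A_3$ and invokes Gabriel's theorem for decomposability. It then reads off the summands from the compatible basis $\sigma_1,\sigma_2,\sigma_3-\sigma_5,\sigma_4,\sigma_5$, where $\tau=\sigma_3-\sigma_5$ spans the chamber $L<r\le L_4$.

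You stay on $(\mathbb{R}_{>0},\le)^{\mathrm{op}}\times(\mathbb{R}_{>0},\le)$ throughout. You use that the presence region of a fixed cycle in a filtration of graphs is an up-set, hence an interval. You then write down the isomorphism $\bigoplus_j\mathbb{F}S_j\to M$ directly.

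What your route buys: the chamber poset, its Hasse diagram, the convex-isotopy hypothesis and Gabriel's theorem all become unnecessary. It also shows that the only real choice is a complement of $\langle\sigma_1,\tau\rangle$ in the top chamber. That is why your $X_3$ works exactly as well as the paper's $\sigma_5$. What the paper's route buys: decomposability follows from representation type before any basis is found.

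\textbf{The verification you were worried about.} It goes through, and no swap is needed. From the paper's explicit chains:
\begin{itemize}
\item $X_3$ contains the edges $\{20,01\}$ and $\{30,01\}$, so its support is $\{r\le L,\ r\le L_4\}$.
\item $Y$ uses only the vertices $20,30,40,02,03,04$, so its support is $\{r\le L_4\}$.
\end{itemize}

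\textbf{Three small points to tighten.}
\begin{enumerate}
\item The strip $L_3<r\le L_2$, $r\le L$ is covered neither by Theorem~\ref{cor:4.4} nor by Theorem~\ref{thm:4:3}(4). A direct count fixes this: the vertices are $10,20,01,02$ and the edges are $\{10,02\},\{20,01\}$, so the rank there is $0$.
\item The paper's comparison of reduced and unreduced graphs is only a pointwise rank count. To transfer a decomposition you need a natural isomorphism. Use the chain map $[x0,0y]\mapsto[x0,xy]+[xy,0y]$.
  \begin{itemize}
  \item Both sides are present exactly when $r\le\min\{L_x,L_y\}$.
  \item The map commutes with the inclusions.
  \item It is bijective on cycles, because a vertex $xy$ of degree at most one carries no cycle.
  \end{itemize}
  The paper uses this step implicitly too. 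Relatedly, independence of your five cycles need only be checked once, in $C_1$ of the full reduced graph, since $H_1=Z_1\subseteq C_1$ for every subgraph.
\item With the paper's orientations and $\mathrm{char}\,\mathbb{F}\neq 2$, the cycle $Y$ is $\sigma_3-\sigma_5$. The chain $\sigma_3+\sigma_5$ still passes through $01$ and would have support $\{r\le L,\ r\le L_4\}$. So ``symmetric difference'' must be read with compatible orientations.
\end{enumerate}
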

\begin{proof}
    Let $(P,\leq)$ be the poset of chambers of the parameter space of $(\mathsf{Star}_4)^2_{r,\mathbf{L}}$ and construct the representation $N$ over $(P,\leq)$ (the construction is provided in~\cite{li2024notes}) such that $PH_1((\mathsf{Star}_4)^2_{-,-};\mathbb{F})\cong N\circ \mathcal{F}$ where $\mathcal{F}:(\mathbb{R},\leq)\op\times (\mathbb{R},\leq)\rightarrow (P,\leq)$ is the canonical projection map. 

\begin{figure}[htbp!]
    \centering
    \begin{subfigure}{0.45\textwidth}
        \centering
        \begin{tikzcd}
\bullet & \bullet \arrow[l]\\
\bullet \arrow[u] &   
\end{tikzcd}
        \caption{The Hasse Diagram of the Poset of Chambers}
        \label{fig:Poset_P}
    \end{subfigure}
    \hfill
    \begin{subfigure}{0.45\textwidth}
        \centering
                \begin{tikzcd}
\mathbb{F}^{5} & \mathbb{F} \arrow[l] &\\
\mathbb{F} \arrow[u] & 
\end{tikzcd}
        \caption{Poset Representation $N$}
        \label{fig:Rep/P}
    \end{subfigure}
    
    \caption{}
\end{figure}
    
    Note that the support of $N$ is (a subgraph of) a type $A_3$ quiver. Gabriel's theorem implies that $N$ is interval-decomposable. Therefore, $PH_1((\mathsf{Star}_4)^2_{-,-};\mathbb{F})$ is interval-indecomposable. 
    
    To find the indecomposable direct summands of $PH_1((\mathsf{Star}_4)^2_{-,-};\mathbb{F})$, by Theorem~\ref{thm-sim-1'}, it suffices to find the indecomposable direct summands of $PH_1(\lVert(G_k)_{-,-}\rVert;\mathbb{F})$. We use the fundamental cycles of reduced $(G_k)_{r,\mathbf{L}}$ to provide a basis for each vector space in the representation $N$. We first fix an orientation on every edge of reduced $(G_k)_{r,\mathbf{L}}$ where $(r,L)$ is a representative of the chamber $\{(r,L)\in(\mathbb{R}_{>0})^2: r\leq L, r\leq L_4\}$. Note that for the other two chambers that have nonzero rank, there exists $(r',L')$ and $(r'',L'')$ from each chamber and $(r_0',L_0'),(r_0'',L_0'')\in \{(r,L)\in(\mathbb{R}_{>0})^2: r\leq L, r\leq L_4\}$ such that $(r',L')\leq (r'_0,L'_0)$ and $(r'',L'')\leq (r''_0,L''_0)$. Note that $(G_k)_{r,L}=(G_k)_{r'_0,L'_0}=(G_k)_{r''_0,L''_0}$, consequently, the reduced graphs $(G_k)_{r',\mathbf{L}'}$ and $(G_k)_{r'',\mathbf{L}''}$ embed as subgraphs of reduced $(G_k)_{r,\mathbf{L}}$, and we equip them with the orientations induced by restricting the globally chosen orientation on $(G_k)_{r,\mathbf{L}}$.

    
    Define the 1-chain $\sigma_i$ for each fundamental cycle $X_i$ as follows:
    \begin{equation}
        \begin{aligned}
            \sigma_1&=[10,02]-[30,02]+[30,01]-[20,01]+[20,03]-[10,03]\\
            \sigma_2&=[10,02]-[30,02]+[30,01]-[20,01]+[20,04]-[10,04]\\
            \sigma_3&=[20,01]-[30,01]+[30,04]-[20,04]\\
            \sigma_4&=[30,01]-[40,01]+[40,02]-[30,02]\\
            \sigma_5&=[20,01]-[30,01]+[30,02]-[40,02]+[40,03]-[20,03]
        \end{aligned}
    \end{equation}
    
    Define the 1-chain $\tau$ for each fundamental cycle $Y$ as follows:
    \begin{equation}
        \begin{aligned}
            \tau&=[20,03]-[40,03]+[40,02]-[30,02]+[30,04]-[20,04]
        \end{aligned}
    \end{equation}
    
    Note that $\tau=\sigma_3-\sigma_5$. Hence we obtained a compatible basis for the representation $N$. In particular, every linear transformation in the representation is an inclusion. 
    

     \begin{figure}[htbp!]
        \centering
                \begin{tikzcd}
<\sigma_1, \sigma_2, \sigma_3-\sigma_5, \sigma_4, \sigma_5> & <\tau> \arrow[l] &\\
<\sigma_1> \arrow[u] & 
\end{tikzcd}
        \caption{Compatible Basis for $N$}
        \label{fig:Rep/P}
    \end{figure}
Hence the indecomposable direct summands of $N$ are:

\begin{figure}[htbp!]
  \centering

  \begin{subfigure}[b]{0.33\textwidth}
    \centering
    \begin{tikzcd}
<\sigma_1> & 0 \arrow[l] &\\
<\sigma_1> \arrow[u] & 
\end{tikzcd}
    \caption{}
    \label{fig:sub-a}
  \end{subfigure}
  \hfill
  \begin{subfigure}[b]{0.33\textwidth}
    \centering
    \begin{tikzcd}
<\sigma_3-\sigma_5> & <\tau> \arrow[l] &\\
0 \arrow[u] & 
\end{tikzcd}
    \caption{}
    \label{fig:sub-b}
  \end{subfigure}
  \begin{subfigure}[b]{0.33\textwidth}
    \centering
    \begin{tikzcd}
<\sigma_i> & 0 \arrow[l] &\\
0 \arrow[u] & 
\end{tikzcd}
    \caption{$i=2,4,5$}
    \label{fig:sub-c}
  \end{subfigure}
  \caption{Indecomposable direct summands of $N$}
  \label{fig:Ind_Summand_N}
\end{figure}

Applying Theorem~\ref{thm-sim-1'}, we obtain the indecomposable direct summands of $PH_1((\mathsf{Star}_4)^2_{-,-};\mathbb{F})$ up to isomorphism, as given in Figure~\ref{fig:19}. (Each summand is an interval module supported on the shaded region.)
\end{proof}


The preceding argument establishes the base case of the induction. We now complete the inductive step in the following lemma.

\begin{lemma}\label{lem:5.10}
    For all $k\geq 3$ and $\mathbf{L}=(L,L_2,\dots, L_k)$ where $L_2,\dots, L_k$ are arbitrary but fixed positive real numbers, the 2-parameter persistence module $PH_1((\mathsf{Star}_k)^2_{-,-};\mathbf{F})$ is interval-decomposable.
\end{lemma}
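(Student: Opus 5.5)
Since the lemma is presented as the inductive step whose base case is Theorem~\ref{thm:5.8}, I would induct on $k$, working entirely in the model: by Theorem~\ref{thm:2.8} it suffices to show that $PH_1(\lVert(G_k)_{-,-}\rVert;\mathbb{F})$ is interval-decomposable. (For $k=3$ I would check the claim directly: the reduced model is small and $H_1$ is at most rank one on each chamber, so the module is thin with connected, convex support. The induction then starts at $k=4$.)

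The key structural input is Corollary~\ref{Coro:2:11}. It says every structure map is induced by an inclusion of $1$-dimensional complexes and is therefore injective. Consequently every $PH_1((\mathsf{Star}_k)^2_{r,\mathbf{L}};\mathbb{F})$ is a subspace of the single vector space $W_k:=H_1(\lVert(G_k)'_{r_0,\mathbf{L}_0}\rVert;\mathbb{F})$, where $(r_0,L_0)$ is taken in the maximal chamber $\{r\le L,\ r\le L_k\}$. Indeed, any $(r,L)$ is dominated, in the order of $(\mathbb{R}_{>0},\le)\op\times(\mathbb{R}_{>0},\le)$, by a point of that chamber.

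So the module is a family of subspaces $U_{(r,L)}\subseteq W_k$, indexed by the chambers of the reduced hyperplane arrangement, with $U_p\subseteq U_q$ whenever $p\le q$ (Theorems~\ref{thm:4.4} and \ref{thm:4.7} and Theorem~\ref{thm-sim-1'} let us pass to the finite poset of chambers). Interval decomposability then follows if I can find one basis $\mathcal{B}$ of $W_k$ such that every $U_p$ is spanned by a subset $\mathcal{B}_p\subseteq\mathcal{B}$. Each basis vector $b$ then spans an interval summand supported on $\{p: b\in\mathcal{B}_p\}$. This set is an up-set intersected with the nonzero region, and it is connected because it contains the top chamber, so it is convex and connected.

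The inductive step is to construct $\mathcal{B}$ for $k$ from the one for $k-1$. The graph $(G_{k-1})'$ sits inside $(G_k)'$ as the subgraph avoiding the leaf $k$, i.e.\ the vertices $k0$ and $0k$ together with their incident edges, and this inclusion is compatible with the filtrations when $L_k$ is the smallest length. I would extend the spanning tree used for $k-1$ by attaching $k0$ and $0k$ through tree edges $\{k0,01\}$ and $\{10,0k\}$. The new fundamental cycles come from the non-tree edges $\{k0,0y\}$ with $y\ne 1,k$ and $\{x0,0k\}$ with $x\ne1,k$, which are born exactly at $r\le L_k$. For these I would take the analogue of the cycles $\sigma_3,\sigma_4,\sigma_5$ in Theorem~\ref{thm:5.8}. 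Every new cycle passes through $k0$ or $0k$, so it lives precisely in the chamber $r\le L_k$, $r\le L$. On chambers with $r>L_k$ the module restricts to the $(k-1)$ module with the same $L$-behaviour, and there the inductive basis works.

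The main obstacle is the chambers with $L<r\le L_k$, where the vertices $10,01$ disappear. There the subspace $U_p$ is spanned not by old fundamental cycles but by combinations, as with $\tau=\sigma_3-\sigma_5$ in the $k=4$ case. I would handle this by choosing the basis adapted to a flag. First take a basis of the cycles avoiding $10$ and $01$, which is exactly $H_1$ of $(G_k)'$ with leaf $1$ deleted, a graph isomorphic to a $(G_{k-1})'$ model. Then extend this to all of $W_k$. I expect the delicate point to be showing that this single flag basis is simultaneously compatible with the vertical filtration in $r$, i.e.\ with the successive deletions of leaves $k,k-1,\dots$. I would try to achieve this by ordering the leaves so that deletion of leaf $1$ and deletion of the short leaves commute, so that the relevant subspaces are intersections of coordinate-like subspaces. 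The rank counts in Theorems~\ref{thm:4:3}, \ref{cor:4.4}, \ref{thm:4.5} and \ref{thm:4.6-1} would confirm that the subsets $\mathcal{B}_p$ have the right sizes.
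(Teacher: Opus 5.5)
\textbf{Verdict: there is a genuine gap.} Your framework is sound, but the inductive step that carries all the content is not carried out.

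\textbf{What is sound, and how it differs from the paper.} Every structure map is induced by an inclusion of subgraphs of one fixed graph, so each $U_p$ is a subspace of $W_k$. A basis of $W_k$ that spans every $U_p$ by a subset then gives interval summands whose supports are up-sets containing the top chamber. The paper takes a different route: it splits $M^{(N+1)}=A^{(N+1)}\oplus B^{(N+1)}$, with $B^{(N+1)}$ carried by the two new chambers $b=\{L<r\le L_k\}$ and $d=\{r\le\min(L,L_k)\}$.

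\textbf{Where your inductive step fails.} Your spanning-tree extension attaches $k0$ to $01$ and $0k$ to $10$. So every new fundamental cycle uses the edge $\{k0,01\}$ or $\{10,0k\}$, and none of them lies in $U_b$. The basis vectors inside $U_b$ are then old cycles avoiding $10,01$. These span only the space $U_a$ of the chamber $a=\{L_k<r\le L_{k-1},\ r>L\}$, which is a proper subspace of $U_b$: for $k=4$, $U_a=0$ while $U_b=\langle\tau\rangle$. Your fallback, starting from a basis of $U_b$ and extending, gives up adaptation to $W_{k-1}$ and to the old chambers. ``Ordering the leaves so that the deletions commute'' does not show that a common basis exists.

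\textbf{The missing idea.} Cycle spaces of subgraphs of a fixed graph intersect as the cycle space of the intersection. Hence $U_b\cap W_{k-1}$ is the cycle space of the reduced graph with leaves $1$ and $k$ deleted, that is, $U_b\cap W_{k-1}=U_a$. The inductive basis $\mathcal{B}_{k-1}$ already spans $U_a$ by a subset. The step then goes as follows.
\begin{enumerate}
\item Take $\mathcal{B}_{k-1}$ and adjoin a basis $\mathcal{C}$ of a complement of $U_a$ in $U_b$. This is independent of $W_{k-1}$, because $\mathrm{span}\,\mathcal{C}\cap W_{k-1}\subseteq \mathrm{span}\,\mathcal{C}\cap U_a=0$.
\item Complete $\mathcal{B}_{k-1}\cup\mathcal{C}$ to a basis of $W_k$.
\item For $r>L_k$ the vertices $k0,0k$ are absent, so $U_p$ is a $(k-1)$-level subspace spanned by part of $\mathcal{B}_{k-1}$.
\item For $r\le L_k$ the only subspaces occurring are $U_b$ and $W_k$.
\end{enumerate}
So the new basis is adapted to every $U_p$. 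For $k\ge 5$ this gives $|\mathcal{C}|=2k-6$ plus two further vectors, matching the $R_k$ and $T_k$ multiplicities.

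An equivalent way to see it: each $U_p$ is the cycle space of the reduced graph induced on $\{x0,0x: x\in S\}$ with $S=\{1,\dots,j\}$ or $S=\{2,\dots,j\}$. So the $U_p$ form two flags in $W_k$, and any two flags admit a common adapted basis.

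The paper's splitting relies on the same identity, in the form $\mathrm{Im}\,M(b\le d)\cap\mathrm{Im}\,M(c\le d)=\mathrm{Im}\,M(a\le d)$. This is exactly what allows the complement $B^{(N+1)}$ to be chosen as a subrepresentation, and the paper does not verify it.
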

\begin{proof}
  Theorem~\ref{thm:5.8} shows that the statement is true for $k=4$. Assume the statement is true when $k=N$. 

  Let $P_{N+1}$ be the poset of chambers for $(\mathsf{Star}_{N+1})^2_{r,\mathbf{L}^{(N+1)}}$ and construct the representation $M^{(N+1)}$ over $(P,\leq)$ (the construction is provided in~\cite{li2024notes}) such that  $$PH_1((\mathsf{Star}_{N+1})^2_{-,-};\mathbb{F})\cong M^{(N+1)}\circ \mathcal{F}^{(N+1)}$$ where $\mathcal{F}^{(N+1)}:(\mathbb{R}_{>0},\leq)\op\times (\mathbb{R}_{>0},\leq)\rightarrow P_{N+1}$ is the canonical projection map sending each $(r,L)$ to the chamber that contains $(r,L)$. 
  
  For all $r,L>0$, consider $(\mathsf{Star}_{N})^2_{r,\mathbf{L}^{(N)}}$ where $\mathsf{Star}_{N}$ is obtained from removing edge $e_{N+1}$ from $\mathsf{Star}_{N+1}$. Let $P_{N}$ denote the poset of chambers for $(\mathsf{Star}_{N})^2_{r,\mathbf{L}^{(N)}}$. Note that $P_N$ is a subposet of $P_{N+1}$ and the cardinality of $P_{N}$ equals the cardinality of $P_{N+1}$ minus $2$, because there are exactly two chambers in the hyperlane arrangement of $(\mathsf{Star}_{N+1})^2_{r,\mathbf{L}^{(N+1)}}$ whose elements satisfy $r\leq L_{N+1}$. Let $b,d$ be the elements in $P_{N+1}$ that are not in $P_N$. Without loss of generality, we assume $b\leq d$. Let $a,c$ be the elements of $P_N$ such that $a\leq c$, $b$ covers $a$ (denoted by $a\preceqdot b$), and $d$ covers $c$ (denoted by $c\preceqdot d$).

  
  Define a representation $\hat{M}^{(N+1)}$ as follows:
  $$\hat{M}^{(N+1)}p=\begin{cases} M^{(N+1)}p, & \mbox{if } p\neq a,c\\ \mathsf{Im}(M^{(N+1)}(a\leq b)), & \mbox{if } p=a\\
  \mathsf{Im}(M^{(N+1)}(c\leq d)), & \mbox{if } p=c\end{cases}$$
   $$\hat{M}^{(N+1)}(p\leq q)=\begin{cases} M^{(N+1)}(p\leq q), & \mbox{if } p,q\neq a,c\\ M^{(N+1)}(a\leq b)\circ M^{(N+1)}(p\leq a), & \mbox{if } q=a\\
   M^{(N+1)}(c\leq d)\circ M^{(N+1)}(p\leq c), & \mbox{if } q=c \mbox{ and } p\neq a\\
   \restr{M^{(N+1)}(b\leq d)}{\Ima M^{(N+1)}(a\leq b)}, & \mbox{if } p=a \mbox{ and } q=c\\
    M^{(N+1)}(a\leq q)\circ (M^{(N+1)}(a\leq b))^{-1}, & \mbox{if } p=a \mbox{ and } q\neq c\\
    M^{(N+1)}(c\leq q)\circ (M^{(N+1)}(c\leq d))^{-1}, & \mbox{if } p=c\\
   \end{cases}$$
where $(M^{(N+1)}(a\leq b))^{-1}$ is a morphism defined on ${\Ima M^{(N+1)}(a\leq b)}$ and $(M^{(N+1)}(c\leq d))^{-1}$ is a morphism defined on ${\Ima M^{(N+1)}(c\leq d)}$ because both $M^{(N+1)}(a\leq b)$ and $M^{(N+1)}(c\leq d)$ are injective (as shown in the proof of Corollary~\ref{Coro:2:11}).
It is clear that $\hat{M}^{(N+1)}(a\leq b)$ and $\hat{M}^{(N+1)}(c\leq d)$ are inclusions.

Define $\alpha: M \Rightarrow \hat{M}$ as follows: for each $p\in P_{N+1}$,

$$\alpha_p=\begin{cases} \id_{Mp}, & \mbox{if } p\neq a, c\\
 M^{(N+1)}(a\leq b), & \mbox{if } p=a\\
  M^{(N+1)}(c\leq d), & \mbox{if } p=c\\
   \end{cases}$$

Note that $\alpha$ is a homomorphism from $M$ to $\hat{M}$. Moreover, $\alpha_p$ is an isomorphism for all $p\in P_{N+1}$. Therefore, $\alpha$ is an isomorphism, i.e., $M\cong\hat{M}$. By a slight abuse of notation, we will write $M$ in place of $\hat{M}$ for the remainder of the proof.

Define a representation $A^{(N+1)}$ over $P_{N+1}$ as follows:

For each $p\in P_{N+1}$,
  $$A^{(N+1)}p=\begin{cases} M^{(N+1)}p, & \mbox{if } p\in P_N-\{a,c\} \\ \mathsf{Im}(M^{(N+1)}(a\leq b)), & \mbox{if } p=a \mbox{ or } p=b\\
  \mathsf{Im}(M^{(N+1)}(c\leq d)), & \mbox{if } p=c \mbox{ or } p=d\\
  \end{cases}
  $$
  
  For each $p\leq q\in P_{N+1}$,
  $$
  A^{(N+1)}(q\leq p)=\restr{M^{(N+1)}(q\leq p)}{A^{(N+1)}_q}$$

  Define a representation $B^{(N+1)}$ over $P_{N+1}$ as follows: 
  
  For each $p\in P_{N+1}$,
  $$B^{(N+1)}p=\begin{cases} 0, & \mbox{if } p\in P_N \\ \mathsf{coker}(M^{(N+1)}(a\leq b)), & \mbox{if } p=b\\
  \mathsf{coker}(M^{(N+1)}(c\leq d)), & \mbox{if } p=d\end{cases} 
  $$
  
  For each $p\leq q\in P_{N+1}$,
  $$B^{(N+1)}(q\leq p)=\restr{M^{(N+1)}(q\leq p)}{B^{(N+1)}_q}$$
  
  Since linear transformations $M^{(N+1)}(a\leq b)$ and $M^{(N+1)}(c\leq d)$ are inclusions, $A^{(N+1)}$ and $B^{(N+1)}$ are subrepresentations of $M^{(N+1)}$ and $M^{(N+1)}=A^{(N+1)}\oplus B^{(N+1)}$. Note that $M^{(N+1)}p=M^{(N)}p$ for all $p\in P_N\subseteq P_{N+1}$. By the induction hypothesis, $M^{(N)}$ is interval-decomposable, hence there exists a compatible basis (denoted by $Y_1,\dots, Y_{N^2-3N+1}$) for $M^{(N+1)}p$ and $p\in P_N$, where $Y_i$ is a linear combination of some fundamental cycles of $(G_N)_{\mathbf{L}^{(N)}}$. Since $A^{(N+1)}(a\leq b)$ and $A^{(N+1)}(c\leq d)$ are inclusion maps in $P_{N+1}$, the basis $Y_1,\dots, Y_{N^2-3N+1}$ is also a compatible basis for $A^{(N+1)}$. Hence $A^{(N+1)}$ is interval-decomposable. On the other hand, note that the support of $B^{(N+1)}$ is a type $A_2$ quiver; therefore, $B^{(N+1)}$ is interval-decomposable. 
\end{proof}

To organize the inductive argument and ensure compatibility of the fundamental cycles produced at each $(r,L)$, we now introduce the notion of a weight-biased spanning tree.

\begin{definition}
Let $G = (V,E)$ be a finite undirected connected simple graph with no loops, and let $w_V: V \to \mathbb{R}_{\ge 0}$ be a weight function assigning a nonnegative real value to each vertex. For each edge $\{u,v\} \in E$, define its \key{priority} by
    \[
    p(\{u,v\}) = \min\{w_V(u),\, w_V(v)\}.
    \]
Let $T=(V_T,E_T)$ be a spanning tree of $G$. $T$ is called a \key{(weight-)biased spanning tree} of $G$ if $T$ maximizes the total edge priority 
$$
\sum_{\{u,v\}\in E_T} p(\{u,v\})
$$
among all spanning trees of $G$.
\end{definition}

We now describe a greedy procedure that constructs such a tree as a subgraph of $G$.
\begin{enumerate}
    \item (\textbf{Initialization}) Let 
    \[
    v_0 = \arg\max_{v \in V} w_V(v)
    \]
    be a vertex of maximum weight. Set $T = (\{v_0\}, \varnothing)$.
    
    \item (\textbf{Growth rule}) At each step, among all edges $\{u,v\}$ of $G$ with exactly one endpoint in $T$, choose an edge of maximum priority $p(\{u,v\})$. Add this edge to $T$, together with its new endpoint.

    \item (\textbf{Termination}) Repeat until $T$ spans all vertices of $G$.
\end{enumerate}

This is Prim's algorithm for maximum spanning trees with weight function $p$, therefore, the output graph is a biased spanning tree.

\begin{lemma}\label{lemma:4:12}
    Let $G = (V_G,E_G)$ be a finite undirected connected simple graph (with no loops) with a weight function $w_G: V_G \to \mathbb{R}_{\ge 0}$. Let $v_0\in V_G$ be a vertex such that $w_G(v_0)=\min\limits_{v\in V_G}\{w_G(v)\}$. Define $H= (V_H,E_H)$ to be the (full) subgraph of $G$ such that $H=G-\{v_0\}$. If $H$ is connected, then any biased spanning tree of $H$ can be extended to a biased spanning tree of $G$.
\end{lemma}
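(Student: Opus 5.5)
Let $T_H$ be a biased spanning tree of $H$. We set $T_G = T_H \cup \{e_0\}$, where $e_0=\{v_0,u\}$ is an edge of $G$ incident to $v_0$. Such an edge exists because $G$ is connected and $H$ is nonempty (the degenerate case $V_G=\{v_0\}$ is trivial). Then $T_G$ is a spanning tree of $G$: it has $\lvert V_G\rvert-1$ edges, it is connected since $T_H$ spans $H$, and it is acyclic because $v_0$ is a leaf of $T_G$. It remains to show that $T_G$ maximizes total priority among all spanning trees of $G$.

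The key observation is that every edge incident to $v_0$ has the same priority. Since $w_G(v_0)$ is the global minimum, for any edge $\{v_0,u\}$ we have $p(\{v_0,u\})=\min\{w_G(v_0),w_G(u)\}=w_G(v_0)=:m$. Moreover every edge of $G$ has priority at least $m$. Also, priorities of edges in $H$ computed in $H$ and in $G$ agree, since $H$ is a full subgraph with the restricted weight. Hence $p(T_G)=p(T_H)+m$.

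Now let $T$ be an arbitrary spanning tree of $G$, and let $d\ge 1$ be the degree of $v_0$ in $T$. Deleting $v_0$ splits $T$ into $d$ components in $H$, so $T-v_0$ is a spanning forest of $H$ with $d$ components. Since $H$ is connected, we can add $d-1$ edges of $H$ joining these components to obtain a spanning tree $T'$ of $H$. Each added edge has priority at least $m$, and $T$ contains exactly $d$ edges at $v_0$, each of priority $m$. Therefore
\[
p(T)=p(T-v_0)+dm \le p(T')-(d-1)m+dm = p(T')+m \le p(T_H)+m = p(T_G),
\]
where the last inequality uses the maximality of $T_H$. Thus $T_G$ is a biased spanning tree of $G$ extending $T_H$.

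The only delicate step is this exchange argument when $v_0$ has degree greater than one in the competing tree. The connectivity hypothesis on $H$ is exactly what guarantees that the forest $T-v_0$ can be completed inside $H$, and the minimality of $w_G(v_0)$ is what guarantees that each completing edge has priority at least $m$, so that trading edges at $v_0$ for edges in $H$ never decreases the total priority.
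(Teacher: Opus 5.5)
Your proof is correct and follows essentially the same route as the paper. You delete $v_0$ from an arbitrary spanning tree $T$ of $G$, complete the resulting forest to a spanning tree of $H$, and compare with the biased tree of $H$ plus one edge at $v_0$, using that every edge at $v_0$ has priority $w_G(v_0)$. Your version is slightly more careful: you justify the key inequality $p(T)\le p(T')+m$ by noting that the $d-1$ completing edges each have priority at least $m$, whereas the paper's displayed inequality leaves this implicit.
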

\begin{proof}
Let $T=(V_T,E_T)$ be a spanning tree of $G$. Define $F=T-\{v_0\}$ and let $T'$ be a tree in $H=G-\{v_0\}$ that contains $F$. Since $T'$ and $H$ have the same vertex sets, $T'$ is a spanning tree of $H$. Note that $p(\{u,v_0\})=w_G(v_0)$ since $w_G(v_0)=\min\limits_{v\in V_G}\{w_G(v)\}$, hence
\begin{equation}\label{eq:4.6-1}
    \begin{aligned}
       \sum_{\{u,v\}\in E_T} p(\{u,v\})&=\sum_{\{u,v\}\in E_F} p(\{u,v\})+ \sum_{\{u,v_0\}\in E_T} p(\{u,v_0\})\\
       &\leq \sum_{\{u,v\}\in E_{T'}} p(\{u,v\})+\max\limits_{\{u,v_0\}\in E_G} p(\{u,v_0\})
    \end{aligned}
\end{equation} 

Let $T''$ be a biased spanning tree of $H$. Hence for any spanning tree $T'$ of $H$, 
\begin{equation}\label{eq:4.6-2}
     \sum_{\{u,v\}\in E_{T'}} p(\{u,v\})\leq \sum_{\{u,v\}\in E_{T''}} p(\{u,v\})
\end{equation}

Since $G$ is connected, there exists $u_0\in V_G$ such that $\{u_0, v_0\}\in E_G$. Define $T^{\ast}=(V^{\ast},E^{\ast})$ to be the graph where $V^{\ast}=V_{T''}\cup\{v_0\}$ and $E^{\ast}=E_{T''}\cup\{u_0,v_0\}$. It is clear that $T^{\ast}$ is a spanning tree of $G$. Moreover, equation (\ref{eq:4.6-1}) and (\ref{eq:4.6-2}) implies that for any spanning tree $T$ of $G$, 
\begin{equation}\label{eq:4.6-3}
    \begin{aligned}
       \sum_{\{u,v\}\in E_T} p(\{u,v\})&\leq \sum_{\{u,v\}\in E_{T'}} p(\{u,v\})+\max\limits_{\{u,v_0\}\in E_G} p(\{u,v_0\})\\
       &\leq \sum_{\{u,v\}\in E_{T''}} p(\{u,v\}) + p(\{u_0,v_0\})
    \end{aligned}
\end{equation}

Therefore, $T^{\ast}$ is a biased spanning tree of $G$.
\end{proof}

  Let $\mathbf{L}^{(N+1)}=(L, L_2, \dots, L_{N+1})$ be the edge-length vector of $(G_{N+1})_{\mathbf{L}^{(N+1)}}$. Without loss of generality, we assume $ L_{N+1}\leq L_{N}\leq L_{N-1}\leq \cdots \leq L_2$. Define a weight function $w_{N+1}$ on set of vertices of the reduced $(G_{N+1})_{\mathbf{L}^{(N+1)}}$ where $w_{N+1}(x0)=L_x$ and $w_{N+1}(0y)=L_y$ for all $x,y=1,\dots,N+1$. 
  
  Note that $\mathbf{L}^{(N)}:=(L, L_2, \dots, L_{N})$ is an edge-length vector for $(G_N)_{\mathbf{L}^{(N)}a}$, where $G_N$ is obtained by removing the vertices $(0,N+1)$ and $(N+1, 0)$ from $G_{N+1}$. Equipped $(G_N)_{\mathbf{L}^{(N)}}$ with the weight function $w_{N}=\restr{w_{N+1}}{V'_{N}}$,  where $V'_{N}$ is the vertex set of reduced $(G_N)_{\mathbf{L}^{(N)}}$. Let $T_N$ be the biased spanning tree of reduced $(G_N)_{\mathbf{L}^{(N)}}$ (when $r<L$ and $L<L_N$). Then we can construct a graph $T_{N+1}$ from $T_N$ by adding vertices (0,N+1) and (N+1,0) and adding edges $\{(0,N+1), (2,0)\}$ and $\{(N+1,0), (0,2)\}$ to $T_N$. It is easy to verify that $T_{N+1}$ is also a spanning tree and $T_{N}$ is a subgraph of $T_{N+1}$. By Lemma~\ref{lemma:4:12}, $T_{N+1}$ is a biased spanning tree of $(G_{N+1})_{\mathbf{L}^{(N+1)}}$. Therefore, every fundamental cycle induced by $T_N$ in reduced graph $(G_N)_{\mathbf{L}^{(N)}}$ remains a fundamental cycle of reduced $(G_{N+1})_{\mathbf{L}^{(N+1)}}$. On the other hand, the fundamental cycles of $(G_{N+1})_{\mathbf{L}^{(N+1)}}$ that are not fundamental cycles of $(G_N)_{\mathbf{L}^{(N)}}$ appear no earlier than $f_{V}((0,N+1))$, that is, they lies in $(\mathsf{Star}_K)^2_{r,L}$ where $r\leq L_{N+1}$. 

The indecomposable direct summands of $PH_1((\mathsf{Star}_k)^2_{-,-};\mathbb{F})$ and their multiplicities can be computed from the inductive construction when $k\geq 5$. Consider $(G_k)_{\mathbf{L}^{(k)}}$ with edge-length vector $\mathbf{L}^{(k)}=(L, L_2, \dots, L_k)$ and $L_k\leq L_{k-1}\leq\cdots\leq L_2$. Let $T_i$ be the infinite trapezoidal region bounded by $r=0$, $r=L$, and $r=L_i$. Let $R_i$ be the infinite rectangular region bounded by $r=0$, $L=0$, and $r=L_i$. By the construction in the induction step of the proof of Theorem~\ref{thm:5.10}, the inclusion $(G_{k-1})_{\mathbf{L}^{(k-1)}} \hookrightarrow (G_{k})_{\mathbf{L}^{(k)}}$ induces a monomorphism $$PH_1(\lVert(G_{k-1})_{r,\mathbf{L}^{(k-1)}}\rVert;\mathbb{F})\ \hookrightarrow\ 
PH_1(\lVert(G_{k})_{r,\mathbf{L}^{(k)}}\rVert;\mathbb{F})$$
so every trapezoidal region \(T_i\) and rectangular region \(R_i\) present at stage \(k{-}1\)
appears unchanged in $PH_1(\lVert(G_{k})_{r,\mathbf{L}^{(k)}}\rVert;\mathbb{F})$.
Consequently,
$$
PH_1(\lVert(G_{k})_{-,-}\rVert;\mathbb{F}) \cong\
\Big(\,\bigoplus_{i=2}^{k-1} (\mathbb{F}T_i)^{m_i}\ \oplus\ (\mathbb{F}R_i)^{n_i}\Big)\ \oplus\ \hat{B}^{(k)},
$$
where $\mathbb{F}T_i$ and $\mathbb{F}R_i$ denote the interval modules supported on $T_i$ and $R_i$, respectively; 
$m_i,n_i\in\mathbb{Z}_{\ge 0}$ are their multiplicities; and $\hat{B}^{(k)}$ is a subrepresentation of $PH_1(\lVert(G_{k})_{-,-}\rVert;\mathbb{F})$ which is a pullback of a poset representation $B^{(k)}$ over the poset of the chambers, i.e., $B^{(k)}\circ \mathcal{F}^{(k)}=\hat{B}^{(k)}$.
Note that the support of $B^{(k)}$ is a type $A_2$ quiver and the non--trivial linear transformation in the support of $B^{(k)}$ is injective, the representation $B^{(k)}$ has $2k-6$ interval modules supported on the rectangular region $R_{k}$ and $2$ interval modules supported on the trapezoidal region $T_{k}$. 


The following table lists the support of indecomposable direct summands of $PH_1(\lVert (G_{k})_{-,-} \rVert;\mathbb{F})$ together with their multiplicities. Since every indecomposable direct summand of $PH_1(\lVert (G_{k})_{-,-} \rVert;\mathbb{F})$ is an interval module, this provides a complete characterization of the indecomposable decomposition of $PH_1(\lVert (G_k)_{-,-}\rVert;\mathbb{F})$.

\begin{center}
\begin{tabular}{|c|c|c|}
\hline
\thead{} & \thead{Support of the Summands of \medskip\newline $PH_1(\lVert (G_{k})_{-,-} \rVert;\mathbb{F})$} & \thead{Multiplicity} \\ 
 \hline
 $k=3$ & $T_3$ & $1$\\
\hline
\multirow{3}{*}{$k=4$} & $R_4$ & $1$\\ \cline{2-3}
                                        & $T_3$ & $1$\\ \cline{2-3}
                                        & $T_4$ & $3$ \\ \hline
\multirow{5}{*}{$k\geq 5$} & $R_4$ & 1\\ \cline{2-3}
                                        & $T_3$ & $1$\\ \cline{2-3}
                                        & $T_4$ & $3$ \\
                                        \cline{2-3}
                                        & $R_i (5\leq i\leq k)$ & $2i-6$ \\
                                        \cline{2-3}
                                        & $T_i (5\leq i\leq k)$ & 2 \\
                                        \hline
\end{tabular}
\label{table:1}
\captionof{table}{}
\end{center}

Since the category 
$\cat{vect}_{\mathbb{F}}^{(\mathbb{R}_{>0},\leq)\op\times(\mathbb{R}_{>0},\leq)}$
is Krull--Schmidt, the decomposition of the persistence module $PH_1(\lVert (G_{k})_{-,-} \rVert;\mathbb{F})$ is unique up to isomorphism. Moreover, since $PH_1(\lVert (G_{k})_{-,-} \rVert;\mathbb{F}) \cong 
PH_1((\mathsf{Star}_k)^2_{-,-};\mathbb{F})$ 
(as shown in Theorem~\ref{thm:2.8}), Table~\ref{table:1} gives a complete characterization of the indecomposable direct summands of 
$PH_1((\mathsf{Star}_k)^2_{-,-};\mathbb{F})$, up to isomorphism.

We now combine the above results with Lemma~\ref{lem:5.10} to obtain the main theorem of the paper.

\begin{theorem}[Decomposition of $PH_1((\mathsf{Star}_k)^2_{-,-};\mathbb{F})$]\label{thm:5.10}
Let $k \ge 3$ and let $\mathbf{L} = (L,L_2,\dots,L_k) \in (\mathbb{R}_{>0})^k$ with $L_k \le \cdots \le L_2$ arbitrary but fixed. For each $i=2,\dots,k$, let $T_i$ denote the (infinite) trapezoidal region in the $(r,L)$–plane bounded by $r = 0$, $r = L$, and $r = L_i$, and let $R_i$ denote the (infinite) rectangular region bounded by $r = 0$, $L = 0$, and $r = L_i$. 
The $2$-parameter persistence module $PH_1((\mathsf{Star}_k)^2_{-,-};\mathbb{F})$ is interval-decomposable. More precisely, it decomposes as a finite direct sum of interval modules supported on the regions $T_i$ and $R_i$ up to isomorphism, with the following indecomposable summands and multiplicities:

\begin{itemize}
  \item For $k = 3$:
  \begin{itemize}
    \item one indecomposable interval module supported on $T_3$.
  \end{itemize}

  \item For $k = 4$:
  \begin{itemize}
    \item one indecomposable interval module supported on $R_4$,
    \item one indecomposable interval module supported on $T_3$,
    \item three indecomposable interval modules supported on $T_4$.
  \end{itemize}

  \item For $k \ge 5$:
  \begin{itemize}
    \item one indecomposable interval module supported on $R_4$,
    \item one indecomposable interval module supported on $T_3$,
    \item three indecomposable interval modules supported on $T_4$,
    \item for each $i$ with $5 \le i \le k$, there are $2i-6$ indecomposable
          interval modules supported on $R_i$,
    \item for each $i$ with $5 \le i \le k$, there are $2$ indecomposable
          interval modules supported on $T_i$.
  \end{itemize}
\end{itemize}
\end{theorem}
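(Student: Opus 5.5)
The plan is to replace the configuration spaces by the persistent combinatorial model and argue by induction on $k$. By Theorem~\ref{thm:2.8} it suffices to decompose $PH_1(\lVert (G_k)_{-,-}\rVert;\mathbb{F})$. By Theorem~\ref{thm:4.4} and Theorem~\ref{thm:4.7}, together with Corollary~\ref{Coro:2:11}, this module admits a finite convex isotopy subdivision given by the reduced hyperplane arrangement, whose hyperplanes are the vertical lines $r=L_i$ and the diagonal $r=L$. Hence, by Theorem~\ref{thm-sim-1'}, the module factors through the finite poset of chambers $P_k$. Interval decomposability is then exactly Lemma~\ref{lem:5.10}, with Theorem~\ref{thm:5.8} as the base case $k=4$. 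The case $k=3$ is handled directly: Theorem~\ref{thm:4.5} and the vanishing result Theorem~\ref{cor:4.4} show the only nonzero chambers are those with $r\le L_3$ and $r\le L$, where the rank is $1$. All structure maps there are isomorphisms by injectivity and equal rank, which gives a single summand $\mathbb{F}T_3$.

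For the multiplicities I would not rely only on abstract decomposability. Instead I would build a compatible basis explicitly from fundamental cycles of the reduced graphs $(G_k)'_{r,\mathbf{L}}$, which compute $H_1$ by the theorem comparing reduced and unreduced graphs. I would give the reduced $(G_k)_{\mathbf{L}}$ the vertex weights $w_k(x0)=L_x$, $w_k(0y)=L_y$ (with $L_1=L$) and take a biased spanning tree $T_k$. By Lemma~\ref{lemma:4:12} applied to the two new vertices $(0,k),(k,0)$ of minimal weight, $T_{k-1}$ extends to $T_k$. Every fundamental cycle of stage $k-1$ therefore remains a fundamental cycle at stage $k$ with the same support region, which is the claim that the summands $\mathbb{F}T_i^{m_i}\oplus\mathbb{F}R_i^{n_i}$ for $i<k$ persist unchanged. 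The key point of the biased tree is that, because edge filtration values are minima of vertex weights, the subforest of $T_k$ on $V'_r$ is a spanning tree of each component of $(G_k)'_{r,\mathbf{L}}$. Consequently, the fundamental cycles that exist at $(r,L)$ are exactly the fundamental cycles of $T_k$ whose non-tree edge survives, and these give compatible bases.

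Each new fundamental cycle, closed by an edge incident to $(0,k)$ or $(k,0)$, is born exactly when $r\le L_k$. I would classify these cycles by whether they pass through a vertex $10$ or $01$ (weight $L$). If a cycle uses such a vertex, it dies when $r>L$, and its support is $T_k$. If it does not, it survives for every $L$, and its support is $R_k$. The new edges at $(0,k)$ and $(k,0)$ go to the $2(k-1)$ vertices $x0,0y$ with $x,y\in\{1,\dots,k-1\}$. Two of these edges are tree edges, so there are $2k-4$ new cycles. Exactly the two closed through $10$ and $01$ involve weight-$L$ vertices, giving $2$ copies of $T_k$ and $2k-6$ copies of $R_k$. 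As a cross-check I would compare with the rank jumps $k^2-3k+1-((k-1)^2-3(k-1)+1)=2k-4$ from Theorem~\ref{thm:4:3}, and with the $B^{(k)}$ summand of the $A_2$-type analysis in Lemma~\ref{lem:5.10}. The $k=4$ row is imported from Theorem~\ref{thm:5.8}, and Krull--Schmidt gives uniqueness.

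The main obstacle is the last classification step. It requires arranging, by a careful choice of tree edges (those at $(0,k),(k,0)$ attached to $(2,0),(0,2)$) and of cycle combinations such as $\tau=\sigma_3-\sigma_5$, that each new cycle avoids $10$ and $01$ whenever possible. Otherwise a basis element could be killed at $r>L$ while some combination survives, and that combination would be mis-assigned to $T_k$. I would first attempt a direct case check for $k=5$ and then argue that the pattern is uniform in $k$.
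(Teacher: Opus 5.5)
Your outline follows the paper's own route: reduction to $(G_k)_{-,-}$, factoring through the chamber poset, Lemma~\ref{lem:5.10} with Theorem~\ref{thm:5.8} as base case, biased spanning trees with Lemma~\ref{lemma:4:12}, and new cycles closed at $(0,k),(k,0)$. However, the step that is supposed to deliver the multiplicities fails as stated. Write $Z_S$ for the cycle space of the reduced graph on the vertices $x0,0y$ with $x,y\in S$; at $(r,L)$ the module is $Z_S$ with $S=\{x: L_x\ge r\}$ and $L_1=L$. Your claim that the surviving fundamental cycles of one biased tree $T_k$ form compatible bases is only a one-parameter statement. It holds along $r$ for the single weighting used to build $T_k$, but $w_k(10)=w_k(01)=L$ moves with $L$. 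The two-parameter module must be compatible with two chains at once: $S=\{2,\dots,j\}$ (chambers with $r>L$) and $S=\{1,\dots,j\}$ (chambers with $r\le L$). No single spanning tree can do this. For $k=4$, $Z_{\{1,2,3\}}$ and $Z_{\{2,3,4\}}$ are spanned by the hexagons $\sigma_1$ and $\tau$, which share only the edges $[20,03]$ and $[30,02]$. A spanning tree of the $8$-vertex reduced $(G_4)_{\mathbf{L}}$ having both as fundamental cycles would need at least $5+5-2=8$ edges, one too many. This is why the paper's base basis contains the non-fundamental vector $\tau=\sigma_3-\sigma_5$. It is also why ``every fundamental cycle of stage $k-1$ remains a fundamental cycle'' is not an invariant you can propagate.

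The classification step then fails concretely, not just potentially. From $\sigma_1,\dots,\sigma_5$ one reads off the paper's $k=4$ tree, with edges $[10,02],[30,02],[30,01],[20,01],[20,03],[20,04],[40,02]$. In this tree, $01$ lies on the tree paths from $30$ and $40$ to $20$, and from $03$ and $04$ to $02$. Attaching $(0,5)$ at $(2,0)$ and $(5,0)$ at $(0,2)$, as you propose, makes all six new fundamental cycles pass through $01$. Your rule then returns six copies of $T_5$ and none of $R_5$, whereas the correct counts are $2$ and $4$. Moreover, only $\tau$ among the chosen vectors lies in the $5$-dimensional $Z_{\{2,3,4,5\}}$, so the basis is not compatible there.

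The missing idea is to decouple the two roles.
\begin{itemize}
\item Keep the stage-$(k-1)$ compatible basis supplied by induction, with linear combinations allowed. The spaces $Z_S$ with $S\subseteq\{1,\dots,k-1\}$ do not change. An old vector lies in $Z_{\{2,\dots,k\}}$ iff it lies in $Z_{\{2,\dots,k-1\}}$, because $Z_S\cap Z_{S'}=Z_{S\cap S'}$ for these induced subgraphs.
\item For the new vectors, take the fundamental cycles of the $2k-4$ new non-tree edges with respect to a spanning tree whose restriction to $\{x0,0y: 2\le x,y\le k-1\}$ is connected, with $(0,k),(k,0)$ attached at $(2,0),(0,2)$. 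This is possible once $k\ge 5$; biasedness and Lemma~\ref{lemma:4:12} are beside the point.
\item Each new cycle contains a private edge at $(0,k)$ or $(k,0)$, so independence from the old basis is automatic.
\item Exactly the two cycles closed at $10$ and $01$ meet weight-$L$ vertices. The remaining $2k-6$, together with the old vectors in $Z_{\{2,\dots,k-1\}}$, span $Z_{\{2,\dots,k\}}$, since $d(k-1)-d(k-2)=2k-6$ where $d(s)=s^2-3s+1$ for $s\ge 3$.
\end{itemize}
In the paper's formulation, the identity $Z_{\{1,\dots,k-1\}}\cap Z_{\{2,\dots,k\}}=Z_{\{2,\dots,k-1\}}$ is exactly the injectivity of the cokernel map in $B^{(k)}$. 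Once you have it, the $2k-6$ copies of $R_k$ and the $d(k)-2d(k-1)+d(k-2)=2$ copies of $T_k$ are pure dimension counts, and no case check at $k=5$ is needed.
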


As a direct application of Theorem~\ref{thm:5.10}, we obtain the following corollary for the indecomposable summands of $PH_1((\mathsf{Star}_k)^2_{-,\mathbf{L}};\mathbb{F})$ when $\mathbf{L}=(L,a,\dots,a)$ with $a>0$.

\begin{corollary}\label{Coro:1}
For any $k\geq 4$, $a>0$, and $\mathbf{L}=(L, a, \dots, a)\in(\mathbb{R}_{>0})^k$, the persistence module $PH_1((\mathsf{Star}_k)^2_{-,\mathbf{L}};\mathbb{F})$ is interval-decomposable, and its indecomposable direct summands are given in Figure~\ref{h1:fig:1} and Figure~\ref{h1:fig:2}, where the number provided for each region is the dimension of the vector space assigned to each point of the parameter space.

\begin{figure}[htbp!]
\begin{minipage}{0.55\textwidth}
\centering
\resizebox{0.8\textwidth}{!}{

\tikzset{every picture/.style={line width=0.75pt}} 

\begin{tikzpicture}[x=0.75pt,y=0.75pt,yscale=-1,xscale=1]

\draw  (6.75,299.04) -- (373.46,299.04)(43.42,46.75) -- (43.42,327.07) (366.46,294.04) -- (373.46,299.04) -- (366.46,304.04) (38.42,53.75) -- (43.42,46.75) -- (48.42,53.75)  ;
\draw  [draw opacity=0][fill={rgb, 255:red, 126; green, 211; blue, 33 }  ,fill opacity=0.62 ] (43.42,299.04) -- (139.57,203.47) -- (139.82,298.78) -- cycle ;
\draw  [draw opacity=0][fill={rgb, 255:red, 126; green, 211; blue, 33 }  ,fill opacity=0.62 ] (138.9,52.47) -- (138.9,106.03) -- (139.57,203.47) -- (43.42,299.04) -- (43.93,52.47) -- cycle ;

\draw (139.47,313.34) node   [align=left] {a};
\draw (35.35,203.5) node   [align=left] {a};
\draw (17.07,17.96) node [anchor=north west][inner sep=0.75pt]    {$L$};
\draw (370,302.52) node [anchor=north west][inner sep=0.75pt]    {$r$};

\end{tikzpicture}
}
\captionof{figure}{$\mbox{Multiplicity}=k(k-5)+5$}
\label{h1:fig:1}
\end{minipage}%
\begin{minipage}{0.5\textwidth}
\centering
\resizebox{0.8\textwidth}{!}{

\tikzset{every picture/.style={line width=0.75pt}} 

\begin{tikzpicture}[x=0.75pt,y=0.75pt,yscale=-1,xscale=1]

\draw  (6.75,299.04) -- (373.46,299.04)(43.42,46.75) -- (43.42,327.07) (366.46,294.04) -- (373.46,299.04) -- (366.46,304.04) (38.42,53.75) -- (43.42,46.75) -- (48.42,53.75)  ;
\draw  [draw opacity=0][fill={rgb, 255:red, 248; green, 231; blue, 28 }  ,fill opacity=0.8 ] (138.9,52.47) -- (138.9,106.03) -- (139.57,203.47) -- (43.42,299.04) -- (43.93,52.47) -- cycle ;

\draw (139.47,313.34) node   [align=left] {a};
\draw (35.35,203.5) node   [align=left] {a};
\draw (17.07,17.96) node [anchor=north west][inner sep=0.75pt]    {$L$};
\draw (370,302.52) node [anchor=north west][inner sep=0.75pt]    {$r$};

\end{tikzpicture}
}
\captionof{figure}{$\mbox{Multiplicity}=2k-4$}
\label{h1:fig:2}
\end{minipage}
\end{figure}
\end{corollary}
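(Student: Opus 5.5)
The plan is to specialize Theorem~\ref{thm:5.10} to $L_2=\cdots=L_k=a$, letting the regions collapse and the multiplicities add. When all fixed edge lengths equal $a$, the vertical lines $r=L_i$ for $i=2,\dots,k$ coincide with $r=a$. Hence every trapezoid $T_i$ becomes one region, $T:=\{(r,L): r\le L,\ r\le a\}$, and every rectangle $R_i$ becomes one region, $R:=\{(r,L): r\le a\}$ (with $r,L>0$). These are exactly the two shaded supports in Figures~\ref{h1:fig:1} and~\ref{h1:fig:2}; the first contains the triangular part below the diagonal as well, and the second is the trapezoid. I would first recheck against Figures~\ref{h1:fig:1} and~\ref{h1:fig:2} which figure is $T$ and which is $R$. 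Then I would confirm that $T$ and $R$ are intervals in $(\mathbb{R}_{>0},\leq)\op\times(\mathbb{R}_{>0},\leq)$: both are convex and connected for the order in which $r$ is reversed. Theorem~\ref{thm:5.10} is stated for arbitrary fixed $L_k\le\cdots\le L_2$, ties included, so it applies directly. Interval decomposability then follows immediately, since a direct sum of interval modules with equal supports is still a sum of interval modules.

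The next step is to add up the multiplicities from Theorem~\ref{thm:5.10}. For $R$ the total is $1+\sum_{i=5}^k(2i-6)=1+(k-4)(k-1)=k^2-5k+5$, which matches the multiplicity $k(k-5)+5$. For $T$ the total is $1+3+2(k-4)=2k-4$, which matches the second figure. For $k=4$ the totals are $1$ and $4$, consistent with the same formulas. As a sanity check I would use Theorem~\ref{thm:4:3}. In the chamber $r\le\min(L,a)$ the dimension should be $(k^2-5k+5)+(2k-4)=k^2-3k+1$. In the chamber $L<r\le a$ only $R$ survives, giving $k^2-5k+5$. This agrees with items (1) and (3) of Theorem~\ref{thm:4:3}.

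The main obstacle is the degenerate ties. The inductive construction in Lemma~\ref{lem:5.10} and Theorem~\ref{thm:5.10} assumes that adding edge $e_{N+1}$ creates exactly two new chambers. When all $L_i$ are equal, the new chambers coincide with old ones, so this assumption fails. To deal with this, I would argue by a perturbation and continuity argument. Take $L_i=a+\epsilon_i$ with distinct small $\epsilon_i$, apply Theorem~\ref{thm:5.10}, and let $\epsilon\to 0$. The supports of the summands converge to $T$ and $R$, and the fundamental-cycle bases built from the biased spanning trees are the same cycles for all small $\epsilon$. This gives a compatible basis at $\epsilon=0$, and the Krull--Schmidt property then gives uniqueness. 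Alternatively, one can avoid the perturbation: the chamber poset for $L=(L,a,\dots,a)$ has only two nonzero chambers, $r\le\min(L,a)$ and $L<r\le a$. The morphism between them is injective by Corollary~\ref{Coro:2:11}, so the representation lives on an $A_2$ quiver. Its decomposition then reads off from the dimensions $k^2-3k+1$ and $k^2-5k+5$: the image gives $k^2-5k+5$ copies of $\mathbb{F}R$, and the cokernel gives $2k-4$ copies of $\mathbb{F}T$. I would present this second route as the main proof, since it is self-contained, and mention the first only as a consistency check.
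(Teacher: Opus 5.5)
Your proposal is correct, but your primary argument differs from the paper's. The paper's proof is your first route. It specializes Theorem~\ref{thm:5.10}, notes that $r=a$ becomes a hyperplane of multiplicity $k-1$ so the regions $R_i$ and $T_i$ collapse, and adds the multiplicities. Your sums $1+(k-4)(k-1)=k^2-5k+5$ and $1+3+2(k-4)=2k-4$ are exactly the ``easy computation'' the paper leaves implicit; $T_3$ collapses too, which the paper's ``$4\le i\le k$'' omits.

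Your main route avoids Theorem~\ref{thm:5.10} and the induction of Lemma~\ref{lem:5.10} altogether:
\begin{itemize}
\item The lines $r=L+a$ and $r=2a$ only cut the region $r>a$, where $H_1$ vanishes by Theorem~\ref{thm:4:3}(4). So the only nonzero chambers are $A=\{r\le\min(L,a)\}$ and $B=\{L<r\le a\}$, with $B<A$ and no relation from $A$ to $B$.
\item Corollary~\ref{Coro:2:11}, combined with the constant ranks in Theorem~\ref{thm:4:3}(1),(3), makes every map within a chamber an isomorphism. You should state this explicitly, since it is what Theorem~\ref{thm-sim-1'} needs. The same corollary makes $B\to A$ injective.
\item The $A_2$ decomposition is then forced by dimensions alone.
\end{itemize}
This is the argument of the $k=4$ base case (Theorem~\ref{thm:5.8}) with one fewer nonzero chamber.

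What your route buys is robustness: it uses only rank counts that stay valid when the $L_i$ coincide. You are right that Lemma~\ref{lem:5.10}'s claim that adding an edge creates exactly two new chambers fails under ties, so invoking Theorem~\ref{thm:5.10} at $L_2=\cdots=L_k=a$ is less secure than the paper's one-line proof suggests. What the paper's route buys is an identification of each collapsed summand with a family from the generic case, and in principle explicit spanning-tree cycle representatives; your dimension count does not supply these.

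Your perturbation sketch would need real work, since decompositions and compatible bases do not pass to limits automatically. You offer it only as a consistency check, though, so nothing rests on it.
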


\begin{proof}


Let $a>0$. When $L_k=a$ for all $i=2,\dots,k$, the hyperplane $r=a$ has multiplicity $k-1$. Equivalently, $R_i = R_4$ and $T_i = T_4$ for all $4 \leq i \leq k$. An easy computation shows that the total number of interval summands supported on $R_4$ is $k^2-5k+5$, while the number of summands supported on $T_4$ is $2k-4$.
\end{proof}

\bibliographystyle{alpha}
\bibliography{app}
\appendix
\section{Proof of Lemma 3.3}\label{appendix:lemma-3.3}
\begin{proof}
Let $i:\lVert(G_k)_{r,\mathbf{L}}\rVert\hookrightarrow (\mathsf{Star}_k)^2_{r,\mathbf{L}}$ be the canonical inclusion map. We want to show that there is a continuous map $r:(\mathsf{Star}_k)^2_{r,\mathbf{L}} \rightarrow \lVert(G_k)_{r,\mathbf{L}}\rVert$ such that $r\circ i=\id_{\lVert(G_k)_{r,\mathbf{L}}\rVert}$ and $i\circ r\simeq \id_{(\mathsf{Star}_k)^2_{r,\mathbf{L}}}$ which is constant on $\lVert(G_k)_{r,\mathbf{L}}\rVert$.

The three types of 2-cells of $(\mathsf{Star}_k)^2_{r,\mathbf{L}}$, when $e_i\neq e_j$, are provided in Figure~\ref{fig:2-cell_star}.

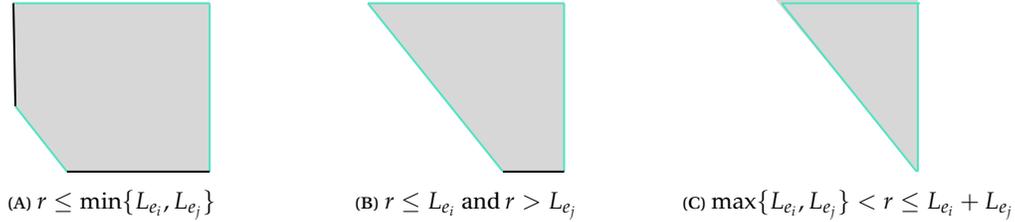
\begin{figure}[htbp!]
     \begin{subfigure}[b]{0.3\textwidth}
         \centering

\tikzset{every picture/.style={line width=0.75pt}} 

\begin{tikzpicture}[x=0.75pt,y=0.75pt,yscale=-1,xscale=1]

\draw  [draw opacity=0][fill={rgb, 255:red, 155; green, 155; blue, 155 }  ,fill opacity=0.4 ] (129,23) -- (129,108) -- (57,108) -- (31,75) -- (30,23) -- cycle ;
\draw [color={rgb, 255:red, 80; green, 227; blue, 194 }  ,draw opacity=1 ][line width=0.75]    (31,75) -- (57,108) ;
\draw [color={rgb, 255:red, 80; green, 227; blue, 194 }  ,draw opacity=1 ][line width=0.75]    (30,23) -- (129,23) ;
\draw [color={rgb, 255:red, 80; green, 227; blue, 194 }  ,draw opacity=1 ][line width=0.75]    (129,23) -- (129,108) ;
\draw [line width=0.75]    (30,23) -- (31,75) ;
\draw [line width=0.75]    (57,108) -- (129,108) ;

\end{tikzpicture}
         \caption{$r\leq\min\{L_{e_i},L_{e_j}\}$}
     \end{subfigure}%
     \hfill
     \begin{subfigure}[b]{0.3\textwidth}
         \centering
\tikzset{every picture/.style={line width=0.75pt}} 

\begin{tikzpicture}[x=0.75pt,y=0.75pt,yscale=-1,xscale=1]

\draw  [draw opacity=0][fill={rgb, 255:red, 155; green, 155; blue, 155 }  ,fill opacity=0.4 ] (129,23) -- (129,108) -- (98.14,108) -- (30,23) -- cycle ;
\draw [color={rgb, 255:red, 80; green, 227; blue, 194 }  ,draw opacity=1 ][line width=0.75]    (30,23) -- (98.14,108) ;
\draw [color={rgb, 255:red, 80; green, 227; blue, 194 }  ,draw opacity=1 ][line width=0.75]    (30,23) -- (129,23) ;
\draw [color={rgb, 255:red, 80; green, 227; blue, 194 }  ,draw opacity=1 ][line width=0.75]    (129,23) -- (129,108) ;
\draw [line width=0.75]    (98.14,108) -- (129,108) ;

\end{tikzpicture}
         \caption{$r\leq L_{e_i}$ and $r>L_{e_j}$}
     \end{subfigure}%
     \hfill
     \begin{subfigure}[b]{0.35\textwidth}
         \centering

\tikzset{every picture/.style={line width=0.75pt}} 

\begin{tikzpicture}[x=0.75pt,y=0.75pt,yscale=-1,xscale=1]

\draw  [draw opacity=0][fill={rgb, 255:red, 155; green, 155; blue, 155 }  ,fill opacity=0.4 ] (129,21.29) -- (129,108) -- (57.29,21.29) -- cycle ;
\draw [color={rgb, 255:red, 80; green, 227; blue, 194 }  ,draw opacity=1 ][line width=0.75]    (60.29,23) -- (128.43,108) ;
\draw [color={rgb, 255:red, 80; green, 227; blue, 194 }  ,draw opacity=1 ][line width=0.75]    (60.29,23) -- (130,23) ;
\draw [color={rgb, 255:red, 80; green, 227; blue, 194 }  ,draw opacity=1 ][line width=0.75]    (129,23) -- (129,108) ;

\end{tikzpicture}
         \caption{$\max\{L_{e_i},L_{e_j}\}<r\leq L_{e_i}+L_{e_j}$}
     \end{subfigure}
        \caption{Three types of 2-cells of $(\mathsf{Star}_k)^2_{r,\mathbf{L}}$, when $e_i\neq e_j$. The 1-cells colored in green represent the free facets of each 2-cell.}
        \label{fig:2-cell_star}
\end{figure}    

The interior of a 2-cell of $(\mathsf{Star}_k)^2_{r,\mathbf{L}}$ consists of all arrangements such that both robots are in the interior of some edges of $\mathsf{Star}_k$ and the distance between the robots is strictly greater than $r$. An 1-cell of $(\mathsf{Star}_k)^2_{r,\mathbf{L}}$ corresponds to one of the following arrangements:
\begin{enumerate}
    \item One robot moves in the interior of an edge of $\mathsf{Star}_k$, and the other robot is at the center of $\mathsf{Star}_k$;
    \item One robot moves in the interior of an edge of $\mathsf{Star}_k$, and the other robot is at a leaf of $\mathsf{Star}_k$;
    \item Both robots move in the interior of some edges of $\mathsf{Star}_k$, and the distance between the robots is equal to $r$.
\end{enumerate}

Depending on the parameters $r$ and $\mathbf{L}$, a 1-cell of $(\mathsf{Star}_k)^2_{r,\mathbf{L}}$ may or may not be a free facet of the 2-cell that contains it. The 1-cells colored in green represent the free facets of each 2-cell of $(\mathsf{Star}_k)^2_{r,\mathbf{L}}$ is provided in Figure~\ref{fig:2-cell_star}, assuming $e_i\neq e_j$.

We aim to construct a homotopy compatible with the parameters $r$ and $\mathbf{L}$, which can be achieved by carefully gluing a homotopy on each 2-cell of $(\mathsf{Star}_k)^2_{r,\mathbf{L}}$ to its 1-dimensional subspace. The cell structure of $(\mathsf{Star}_k)^2_{r,\mathbf{L}}$ is a special case of the cell structure of the restricted nth configuration space of a finite metric graph, which has been studied in \cite{dover2013homeomorphism}. A 2-cell of $(\mathsf{Star}_k)^2_{r,\mathbf{L}}$ is $\lVert e_i\rVert\times \lVert e_j\rVert-\{(x,y):\delta(x,y)<r\}$ for some edges $e_i,e_j$ of $\mathsf{Star}_k$, which is a subspace of $\lVert e_i\rVert\times \lVert e_j\rVert$.



Let $\sigma$ be a 2-cell of $(\mathsf{Star}_k)^2_{r,\mathbf{L}}$. 

\paragraph{Case 1:}When $i=j$ and $0<r<L_{e_i}$, the 2-cell $\sigma$ is an isosceles right triangle. When $r=L_{e_i}$, the triangle degenerates to a 0-cell.
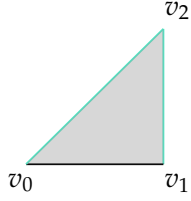
\begin{figure}[htbp!]
    \centering
    \resizebox{!}{0.18\textwidth}{
\tikzset{every picture/.style={line width=0.75pt}} 

\begin{tikzpicture}[x=0.75pt,y=0.75pt,yscale=-1,xscale=1]

\draw [color={rgb, 255:red, 80; green, 227; blue, 194 }  ,draw opacity=1 ][line width=0.75]    (86,21) -- (86,92) ;
\draw [line width=0.75]    (14,92) -- (86,92) ;
\draw [color={rgb, 255:red, 80; green, 227; blue, 194 }  ,draw opacity=1 ]   (14,92) -- (86,21) ;
\draw  [draw opacity=0][fill={rgb, 255:red, 155; green, 155; blue, 155 }  ,fill opacity=0.4 ] (86,21) -- (14,92) -- (86,92) -- cycle ;

\draw (3,95.4) node [anchor=north west][inner sep=0.75pt]    {$v_{0}$};
\draw (85,95.4) node [anchor=north west][inner sep=0.75pt]    {$v_{1}$};
\draw (85,5) node [anchor=north west][inner sep=0.75pt]    {$v_{2}$};

\end{tikzpicture}
    }
    \caption{A 2-cell $\sigma$ when $i=j$ and $0<r<L_{e_i}$ where the length of each leg is $L_{e_i}-r$.}
    \label{fig:enter-label}
\end{figure}

Any point $x\in\sigma=\Delta v_0v_1v_2$ can be written as $x=a_0v_0+a_1v_1+a_2v_2$ where $a_0, a_1, a_2\in\mathbb{R}_{\geq 0}$ and $a_0+a_1+a_2=1$.

$$K^{\sigma}(x,t)=\begin{cases} a_0v_0+(a_1+ta_2)v_1+(1-t)a_2v_2 , & \mbox{if } x\in\Delta v_0v_1v_2;\\ x , & \mbox{otherwise } \end{cases}$$

Moreover, for all $x\in\Delta v_0v_1v_2$, define  
$$r^{\sigma}(x)=a_0v_0+(a_1+a_2)v_1$$

\paragraph{Case 2:}When $0<r\leq \min\{L_{e_i}, L_{e_j}\}$, we subdivide $\sigma$ into three triangles, as shown in Figure~\ref{fig: small_r}. 
\begin{figure}[htbp!]
     \begin{subfigure}[b]{0.3\textwidth}
         \centering

\tikzset{every picture/.style={line width=0.75pt}} 

\begin{tikzpicture}[x=0.75pt,y=0.75pt,yscale=-1,xscale=1]

\draw  [draw opacity=0][fill={rgb, 255:red, 155; green, 155; blue, 155 }  ,fill opacity=0.4 ] (129,23) -- (129,108) -- (57,108) -- (31,75) -- (30,23) -- cycle ;
\draw [color={rgb, 255:red, 80; green, 227; blue, 194 }  ,draw opacity=1 ][line width=0.75]    (31,75) -- (57,108) ;
\draw [color={rgb, 255:red, 80; green, 227; blue, 194 }  ,draw opacity=1 ][line width=0.75]    (30,23) -- (129,23) ;
\draw [color={rgb, 255:red, 80; green, 227; blue, 194 }  ,draw opacity=1 ][line width=0.75]    (129,23) -- (129,108) ;
\draw [line width=0.75]    (30,23) -- (31,75) ;
\draw [line width=0.75]    (57,108) -- (129,108) ;
\draw  [dash pattern={on 4.5pt off 4.5pt}]  (31,75) -- (129,23) ;
\draw  [dash pattern={on 4.5pt off 4.5pt}]  (57,108) -- (129,23) ;

\draw (129,104.4) node [anchor=north west][inner sep=0.75pt]    {$v_{0}$};
\draw (44.57,104.83) node [anchor=north west][inner sep=0.75pt]    {$v_{1}$};
\draw (14,66.4) node [anchor=north west][inner sep=0.75pt]    {$v_{2}$};
\draw (14,4.4) node [anchor=north west][inner sep=0.75pt]    {$v_{3}$};
\draw (129,5.4) node [anchor=north west][inner sep=0.75pt]    {$v_{4}$};

\end{tikzpicture}
         \caption{$r\leq\min\{L_{e_i},L_{e_j}\}$}
         \label{fig: small_r}
     \end{subfigure}%
     \hfill
     \begin{subfigure}[b]{0.3\textwidth}
         \centering

\tikzset{every picture/.style={line width=0.75pt}} 

\begin{tikzpicture}[x=0.75pt,y=0.75pt,yscale=-1,xscale=1]

\draw  [draw opacity=0][fill={rgb, 255:red, 155; green, 155; blue, 155 }  ,fill opacity=0.4 ] (129,23) -- (129,108) -- (98.14,108) -- (30,23) -- cycle ;
\draw [color={rgb, 255:red, 80; green, 227; blue, 194 }  ,draw opacity=1 ][line width=0.75]    (30,23) -- (98.14,108) ;
\draw [color={rgb, 255:red, 80; green, 227; blue, 194 }  ,draw opacity=1 ][line width=0.75]    (30,23) -- (129,23) ;
\draw [color={rgb, 255:red, 80; green, 227; blue, 194 }  ,draw opacity=1 ][line width=0.75]    (129,23) -- (129,108) ;
\draw [line width=0.75]    (98.14,108) -- (129,108) ;
\draw  [dash pattern={on 4.5pt off 4.5pt}]  (129,23) -- (98.14,108) ;

\draw (128,105.4) node [anchor=north west][inner sep=0.75pt]    {$v_{0}$};
\draw (88.57,105.83) node [anchor=north west][inner sep=0.75pt]    {$v_{1}$};
\draw (15,5.4) node [anchor=north west][inner sep=0.75pt]    {$v_{2}$};
\draw (127,5.4) node [anchor=north west][inner sep=0.75pt]    {$v_{3}$};

\end{tikzpicture}
         \caption{$L_{e_j}<r\leq L_{e_i}$}
         \label{fig: r}
     \end{subfigure}%
     \hfill
     \begin{subfigure}[b]{0.35\textwidth}
         \centering
\tikzset{every picture/.style={line width=0.75pt}} 

\begin{tikzpicture}[x=0.75pt,y=0.75pt,yscale=-1,xscale=1]

\draw  [draw opacity=0][fill={rgb, 255:red, 155; green, 155; blue, 155 }  ,fill opacity=0.4 ] (129,21.29) -- (129,108) -- (57.29,21.29) -- cycle ;
\draw [color={rgb, 255:red, 80; green, 227; blue, 194 }  ,draw opacity=1 ][line width=0.75]    (60.29,23) -- (128.43,108) ;
\draw [color={rgb, 255:red, 80; green, 227; blue, 194 }  ,draw opacity=1 ][line width=0.75]    (60.29,23) -- (130,23) ;
\draw [color={rgb, 255:red, 80; green, 227; blue, 194 }  ,draw opacity=1 ][line width=0.75]    (129,23) -- (129,108) ;

\draw (124,104.4) node [anchor=north west][inner sep=0.75pt]    {$v_{0}$};
\draw (44.57,2.83) node [anchor=north west][inner sep=0.75pt]    {$v_{1}$};
\draw (123.57,2.4) node [anchor=north west][inner sep=0.75pt]    {$v_{2}$};

\end{tikzpicture}
         \caption{$\max\{L_{e_i},L_{e_j}\}<r\leq L_{e_i}+L_{e_j}$}
         \label{fig: r_large}
     \end{subfigure}
        \caption{Three types of 2-cells of $(\mathsf{Star}_k)^2_{r,\mathbf{L}}$, when $e_i\neq e_j$. The 1-cells colored in green represent the free facets of each 2-cell.}
        \label{fig:three graphs}
\end{figure}
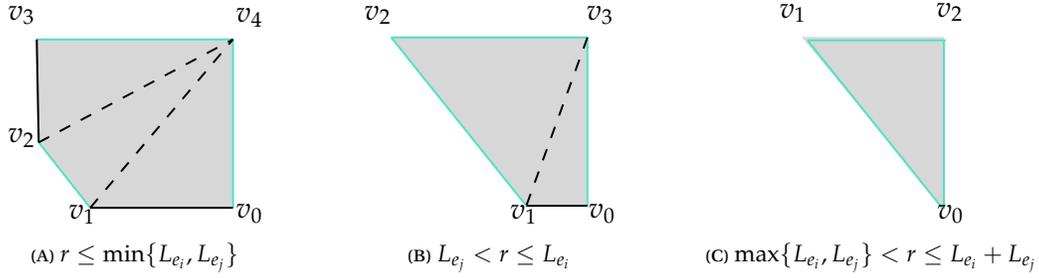

Hence every point $x$ in the polytopal cell can be written as a unique convex combination of the vertices of the triangle that contains the point. For example, any point $x\in\Delta v_1v_2v_4$ can be written as $x=a_1v_1+a_2v_2+a_4v_4$ where $a_1, a_2, a_4\in\mathbb{R}_{\geq 0}$ and $a_1+a_2+a_4=1$ . Define 
$$G^{\sigma}_1(x,t)=\begin{cases} (a_1-ta)v_1+(a_2-ta)v_2+(a_4+2ta)v_4 , & \mbox{if } x\in\Delta v_1v_2v_4 \mbox{ and }\\ & a=\min\{a_1, a_2\}; \\ x , & \mbox{otherwise } \end{cases}$$

$$G^{\sigma}_2(x,t)=\begin{cases} (a_2-ta)v_2+(a_4-ta)v_4+(a_3+2ta)v_3 , & \mbox{if } x\in\Delta v_2v_3v_4 \mbox{ and }\\ & a=\min\{a_2, a_4\}; \\ x , & \mbox{otherwise } \end{cases}$$

$$G^{\sigma}_3(x,t)=\begin{cases} (a_1-ta)v_1+(a_4-ta)v_4+(a_0+2ta)v_0 , & \mbox{if } x\in\Delta v_0v_1v_4 \mbox{ and }\\ & a=\min\{a_1, a_2\}; \\ x , & \mbox{otherwise } \end{cases}$$

$$F^{\sigma}_1(x,t)=\begin{cases}(1-t)a_2v_2+a_4v_4+(a_3+ta_2)v_3 , & \mbox{if } x\in\Delta v_2v_3v_4; \\ x , & \mbox{otherwise } \end{cases}$$

$$F^{\sigma}_2(x,t)=\begin{cases} (1-t)a_1v_1+a_4v_4+(a_0+ta_1)v_0 , & \mbox{if } x\in\Delta v_0v_1v_4; \\ x , & \mbox{otherwise } \end{cases}$$

Let $G^{\sigma}(x,t)=G^{\sigma}_3\circ G^{\sigma}_2\circ G^{\sigma}_1$ and $F^{\sigma}(x,t)=F^{\sigma}_2\circ F^{\sigma}_1$. Here, $G^{\sigma}(x,t)$ is a homotopy of the polytopal cell $\sigma$ relative its boundary $\partial\sigma$ minus $[v_1, v_2]$.

Moreover, for all $x\in\sigma$, define 
$$r^{\sigma}(x)=\begin{cases}(a_2+a_3+a)v_3+(a_4-a)v_4 , & \mbox{if } x\in\Delta v_2v_3v_4 \mbox{ and } a=\min\{a_2, a_4\}; \\
(a_0+a_1+a)v_0+(a_4-a)v_4 , & \mbox{if } x\in\Delta v_0v_1v_4 \mbox{ and } a=\min\{a_1, a_2\};
\\
(a_4+2a_1-a)v_4 + (a_2-a_1+a)v_3 , & \mbox{if } x\in\Delta v_1v_2v_4 \mbox{ and } a=\min\{a_2-a_1, 2a_1+a_4\}.
\end{cases}$$

\paragraph{Case 3:}When $L_{e_j}<r\leq L_{e_i}$, we subdivide $\sigma$ into two triangles, as shown in Figure~\ref{fig: r}. Hence every point $x$ in $\sigma$ can be written as a unique convex combination of the vertices of the triangle that contains the point. For example, any point $x\in\Delta v_1v_2v_3$ can be written as $x=a_1v_1+a_2v_2+a_3v_3$ where $a_1, a_2, a_3\in\mathbb{R}_{\geq 0}$ and $a_1+a_2+a_3=1$ . Define 

$$G^{\sigma}_1(x,t)=\begin{cases} (a_1-ta)v_1+(a_2-ta)v_2+(a_3+2ta)v_3 , & \mbox{if } x\in\Delta v_1v_2v_3 \mbox{ and }\\ & a=\min\{a_1, a_2\}; \\ x , & \mbox{otherwise } \end{cases}$$

$$G^{\sigma}_2(x,t)=\begin{cases} (a_1-ta)v_1+(a_3-ta)v_3+(a_0+2ta)v_0 , & \mbox{if } x\in\Delta v_0v_1v_3 \mbox{ and }\\ & a=\min\{a_1, a_3\}; \\ x , & \mbox{otherwise } \end{cases}$$

$$F_1^{\sigma}(x,t)=\begin{cases}a_1v_1 + (1-t)a_2v_2 + (a_3+ ta_2)v_3 , & \mbox{if } x\in\Delta v_1v_2v_3; \\ x , & \mbox{otherwise } \end{cases}$$

$$F_2^{\sigma}(x,t)=\begin{cases}(a_0+ta_1)v_0+(1-t)a_1v_1+a_3v_3 , & \mbox{if } x\in\Delta v_0v_1v_3; \\ x , & \mbox{otherwise } \end{cases}$$

Then $G^{\sigma}(x,t)=G^{\sigma}_2\circ G^{\sigma}_1$ is a homotopy of the polytopal cell $\sigma$ relative its boundary $\partial\sigma$ minus $[v_1, v_2]$. Define $F^{\sigma}(x,t)=F^{\sigma}_2\circ F^{\sigma}_1$.

Moreover, for all $x\in\sigma$, define 
$$r^{\sigma}(x)=\begin{cases}v_3 , & \mbox{if } x\in\Delta v_1v_2v_4 \mbox{ and } a_1\leq a_2;
\\
(a_1-a_2+a)v_0+(2a_2+a_3-a)v_3, & \mbox{if } x\in\Delta v_1v_2v_4 \mbox{ and } a_1\geq a_2, a=\min\{a_1-a_2,2a_2+a_3\};
\\
(a_0+a_1+a)v_0+(a_3-a)v_3 , & \mbox{if } x\in\Delta v_0v_1v_3 \mbox{ and } a=\min\{a_1,a_3\}.
\end{cases}$$

\paragraph{Case 4:}When $\max\{L_{e_i},L_{e_j}\}<r\leq L_{e_i}+L_{e_j}$, $\sigma$ is a cell as shown in Figure~\ref{fig: r_large}. Hence, every point $x$ in $\sigma$ can be written as a unique convex combination of the vertices of the triangle that contains the point. Any point $x\in\Delta v_0v_1v_2$ can be written as $x=a_0v_0+a_1v_1+a_2v_2$ where $a_0, a_1, a_2\in\mathbb{R}_{\geq 0}$ and $a_0+a_1+a_2=1$. Then $$G^{\sigma}(x,t)= (a_0-ta_0)v_0+(a_1-ta_1)v_1+(a_2+ta_0+ta_1)v_2$$ is a homotopy of the polytopal cell $\sigma$ to $\{v_2\}$.

Moreover, for all $x\in\sigma$, define 
$$r^{\sigma}(x)=v_2$$

Define $$G(x,t)=\begin{cases} G^{\sigma}(x,t), & \mbox{if }x\in\sigma \mbox{ where $\sigma$ is a 2-cell of }(\mathsf{Star}_k)^2_{r,\mathbf{L}}\end{cases}$$

$$F(x,t)=\begin{cases} F^{\sigma}(x,t), & \mbox{if }x\in\sigma \mbox{ where $\sigma$ is a 2-cell of }(\mathsf{Star}_k)^2_{r,\mathbf{L}}\end{cases}$$

$$K(x,t)=\begin{cases} K^{\sigma}(x,t), & \mbox{if }x\in\sigma \mbox{ where $\sigma$ is a 2-cell of }(\mathsf{Star}_k)^2_{r,\mathbf{L}}\end{cases}$$

$$r(x)=\begin{cases} r^{\sigma}(x), & \mbox{if }x\in\sigma \mbox{ where $\sigma$ is a 2-cell of }(\mathsf{Star}_k)^2_{r,\mathbf{L}}\end{cases}$$

By construction, it is clear that $r\circ i=\id_{\lVert (G_k)_{r,\mathbf{L}}\rVert}$.

For each $\sigma$, $G^{\sigma}(x,t)$ is a homotopy of $\sigma$ relative to $\partial\sigma$ minus a free facet of $\sigma$, hence the restrictions of $G^{\sigma}(x,t)$ to the non-free facets are identity maps. Hence $G(x,t)$ is well-defined and continuous. Let $Y$ be the image of $G(x,1)$ which is the deformation retract of  $(\mathsf{Star}_k)^2_{r,\mathbf{L}}$. Restrict $F(x,t)$ to $Y$, then $F(x,t)$ is a continuous on $Y$.

Let $I=[0,1]$ be the unit interval. Define
$$H(x,t):(\mathsf{Star}_k)^2_{r,\mathbf{L}}\times I\rightarrow (\mathsf{Star}_k)^2_{r,\mathbf{L}}$$
where $H(x,t):=F(x,t)\circ G(x,t)\circ K(x,t)$. Note that 
$$H(x,0)=\id_{(\mathsf{Star}_k)^2_{r,\mathbf{L}}}$$
and 
$$i\circ r(x)=F(x,1)\circ G(x,1)\circ K(x,1)=H(x,1)$$ 
we conclude that $H(x,t):=F(x,t)\circ G(x,t)\circ K(x,t)$ gives the desired homotopy $i\circ r\simeq \id_{(\mathsf{Star}_k)^2_{r,\mathbf{L}}}$.
\end{proof}

\end{document}